\newcommand{\eg}{\textsl{e.g.}\xspace}
\newcommand{\etc}{\textsl{etc.}\xspace}
\newcommand{\ie}{\textsl{i.e.}\xspace}
\newcommand{\wrt}{w.r.t.\@\xspace}
\newcommand{\signal}{f}
\newcommand{\truesignal}{\signal_{\text{true}}}
\newcommand{\signalother}{h}
\newcommand{\intensityflow}{I} 
\newcommand{\template}{I}
\newcommand{\data}{g}
\newcommand{\noisedata}{\Delta\data}
\newcommand{\diffeo}{\phi}
\newcommand{\truediffeo}{\diffeo^{\ast}}
\DeclareMathOperator{\Id}{Id}
\newcommand{\diffeoother}{\psi}
\newcommand{\gelement}[2]{\phi^{#2}_{#1}}
\newcommand{\vfield}{\nu}
\newcommand{\vfieldother}{\eta}
\newcommand{\velocityfield}{\boldsymbol{\nu}}
\newcommand{\truevelocityfield}{\boldsymbol{\nu}^{\ast}}
\newcommand{\velocityfieldother}{\boldsymbol{\eta}}
\newcommand{\velocityfieldzero}{\boldsymbol{0}}
\newcommand{\RecSpace}{X}
\newcommand{\RecSpaceFlow}{\mathscr{X}}
\newcommand{\DataSpace}{Y}
\DeclareMathOperator{\DiffG}{Diff}
\newcommand{\LieGroup}{G}
\newcommand{\DiffeoGroup}{\LieGroup_{\LieAlgebra}}
\newcommand{\LieAlgebra}{V}
\newcommand{\FlowSpace}[1]{\mathscr{L}^{#1}([0,1], \LieAlgebra)}
\newcommand{\LpSpace}{\mathscr{L}}
\newcommand{\Smooth}{\mathscr{C}}
\newcommand{\Sobolev}{\mathscr{H}}
\newcommand{\SBV}{SBV} 
\newcommand{\VectorFields}{\Smooth}
\newcommand{\LinearSpace}{\mathscr{L}} 
\newcommand{\VectorSpace}{H}
\newcommand{\VectorSpaceOther}{U}
\newcommand{\Real}{\mathbb{R}}
\newcommand{\Matrix}{\mathbb{M}}
\newcommand{\domain}{\Omega}
\newcommand{\datadomain}{M} 
\newcommand{\ForwardOp}{\mathcal{T}}
\newcommand{\DataDisc}{\mathcal{D}}
\newcommand{\RegFunc}{\mathcal{R}}
\newcommand{\ShapeFunc}{\mathcal{S}}
\newcommand{\ObjectiveV}{\mathcal{E}}
\newcommand{\MatchingFunctionalV}{\mathcal{J}}
\newcommand{\MatchingFunctionalX}{\mathcal{L}}
\newcommand{\GDiff}{\partial}  
\newcommand{\Diff}{\mathcal{D}} 
\newcommand{\grad}{\nabla}
\DeclareMathOperator*{\Div}{div} 
\newcommand{\DeforOpG}{\mathcal{W}} 
\newcommand{\DeforOpX}{\mathcal{V}} 
\newcommand{\LieGroupMetric}{\text{d}_{\LieGroup}}
\newcommand{\Koperator}{\mathcal{K}}
\newcommand{\kernel}{\mathsf{K}}
\DeclareMathOperator{\Laplace}{\triangle}
\newcommand{\dint}{\,\text{d}}
\newcommand{\Cdot}{\,\cdot\,}
\newcommand{\stepsize}{\alpha}
\crefname{line}{step}{steps}
\Crefname{line}{Step}{Steps}
\numberwithin{theorem}{section}
\DeclareAcronym{LDDMM}{
  short = LDDMM,
  long = large deformation diffeomorphic metric mapping}
\DeclareAcronym{RKHS}{
  short = RKHS,
  long = reproducing kernel {H}ilbert space}
\DeclareAcronym{ML}{
  short = ML,
  long = maximum likelihood}
\DeclareAcronym{ET}{
  short = ET,
  long = electron tomography}
\DeclareAcronym{TV}{
  short = TV,
  long = total variation}
\DeclareAcronym{FBP}{
  short = FBP,
  long = filtered back projection}
\DeclareAcronym{SNR}{
  short = SNR,
  long = signal-to-noise ratio}
\DeclareAcronym{ICES}{
  short = ICES,
  long = Institute for Computational Engineering and Sciences}
\DeclareAcronym{CAS}{
  short = CAS,
  long = Chinese Academy of Sciences}
\DeclareAcronym{KTH}{
  short = KTH,
  long =   KTH -- Royal Institute of Technology}
\DeclareAcronym{LSEC}{
  short = LSEC,
  long = State Key Laboratory of Scientific and Engineering Computing}
\DeclareAcronym{ODL}{
  short = ODL,
  long = Operator Discretization Library}
\DeclareAcronym{PDE}{
  short = PDE,
  long = partial differential equation}     
\DeclareAcronym{ODE}{
  short = ODE,
  long = ordinary differential equation}    
\DeclareAcronym{FFT}{
  short = FFT,
  long = Fast Fourier transform} 
\DeclareAcronym{SSIM}{
  short = SSIM,
  long = structural similarity}
\DeclareAcronym{PSNR}{
  short = PSNR,
  long = peak signal-to-noise ratio}
\newcommand{\TheTitle}{Indirect Image Registration with Large Diffeomorphic Deformations} 
\newcommand{\TheAuthors}{C. Chen and O. \"Oktem}
\headers{\TheTitle}{\TheAuthors}
\title{{\TheTitle}\thanks{Version 1.9 revised 2017-10-09.
\funding{The work by Chong Chen and Ozan \"Oktem has been supported by the Swedish Foundation for Strategic Research grant AM13-0049. Chen was also supported in part by the National Natural Science Foundation of China (NSFC) under the grant 11301520.}}}
\author{
  Chong Chen\thanks{Department of Mathematics, KTH--Royal Institute of Technology, 100 44 Stockholm, Sweden; LSEC, ICMSEC, Academy of Mathematics and Systems Science, Chinese Academy of Sciences, Beijing 100190, People's Republic of China (\email{chench@lsec.cc.ac.cn}).}
  \and
  Ozan \"Oktem\thanks{Department of Mathematics, KTH--Royal Institute of Technology, 100 44 Stockholm, Sweden 	 
    (\email{ozan@kth.se}).}
}
\begin{document}

\maketitle

\begin{abstract}
The paper adapts the \acl{LDDMM} framework for image registration to the indirect setting where a template is registered against a target that is given through indirect noisy observations. The registration uses diffeomorphisms that transform the template through a (group) action. These diffeomorphisms are generated by solving a flow equation that is defined by a velocity field with certain regularity. The theoretical analysis includes a proof that indirect image registration has solutions (existence) that are stable and that converge as the data error tends so zero, so it becomes a well-defined regularization method. The paper concludes with examples of indirect image registration in 2D tomography with very sparse and/or highly noisy data.
\end{abstract}

\begin{keywords}
  indirect image registration, shape theory, large deformations, diffeomorphisms, shape regularization, image reconstruction, tomography
\end{keywords}

\begin{AMS}
  65F22, 65R32, 65R30, 65D18, 94A12, 94A08, 92C55, 54C56, 57N25, 47A52
\end{AMS}

\section{Introduction}
Image registration (matching) is the task of transforming a template image so that it matches a target image. This arises in many fields, such as quality control in industrial manufacturing \cite{DeStGa13}, various applications in remote sensing \cite{DaSaSh10}, face recognition \cite{ZiFl03,SaGuCa15}, robotic navigation \cite{BoOrOl08}, and medical imaging \cite{SoDaPa13,ScHePaBr16,ViMaKlMuSt16}, etc. The variant that is considered here is indirect image registration, \ie, when the template image is registered against a target that is known only through indirect noisy observations, such as in tomographic imaging. It makes use of the \acf{LDDMM} framework for diffeomorphic image registration and thereby extends the indirect image registration scheme in \cite{OkChDoRaBa16}, which uses (small) linearized deformations. 

An important development in regularization theory is ongoing where reconstruction and feature extraction steps are pursued simultaneously, such as joint segmentation and reconstruction. Indirect image registration can be seen as part of this development where the feature in question is the ``shape'' of the structures in the image and their temporal variability. This is highly relevant for spatiotemporal imaging, see \cite{OkChDoRaBa16} for a survey and other use cases of indirect image registration.

There is an extensive literature on image registration where a template is registered by means of a diffeomorphism against a target so that their  ``shapes''  agree, see \cite{SoDaPa13} for a nice survey. Image registration is then recast as the problem of finding a suitable diffeomorphism that deforms the template into the target image \cite{ChRaMi96}. The underlying assumption is that the target image is contained in the orbit of the template under the group action of diffeomorphisms. This principle can be stated in a very general setting where diffeomorphism acts on various image features, like  landmark points, curves, surfaces, scalar images, or even vector/tensor valued images \cite{Yo10}. It also adapts readily to the indirect image registration setting and this is worked out in detail for the case when one seeks to register scalar images.

\section{Overview of paper and specific contributions}
The main contribution is to develop a variational framework for indirect image registration (\cref{sec:IndirectImageReg}) where a template is registered against a target that is known only through indirect noisy observations (data). This is done by appropriately adapting the \ac{LDDMM} framework where the template is deformed by diffeomorphisms that act on images through a group action as explained in \cref{sec:ShapeTheory}. This section also contains an overview of the \ac{LDDMM} theory.

An important theoretical topic is to investigate to what extent this indirect registration scheme is a regularization. Existence of solutions along with stability and convergence results are stated in \cref{sec:RegularizingProperty}. In conclusion, the \ac{LDDMM} based indirect registration scheme is formally a well-defined regularization method. On the other hand, as most of the other variational schemes for image registration, one cannot expect to have uniqueness due to lack of convexity. This theoretical investigation is followed by explicit calculations of the derivative and gradient of the objective functional in the variational formulation of the indirect registration problem (\cref{sec:Gradient}). Numerical implementation, which relies on theory of \acl{RKHS} (\cref{sec:RKHS}) is outlined in \cref{sec:NumericalImpl}.

\Cref{sec:2DCT} contains some numerical experiments from tomography that show performance of indirect image registration. The results support the claim that shape information contained in the template has a strong regularizing effect, and especially so for noisy highly under-sampled inverse problems. Furthermore, the experiments also suggest that the prior shape information does not have to be that accurate, which is important for the used cases mentioned in \cite{OkChDoRaBa16}. None of these claims are however proved formally by mathematical theorems. 

Finally, \cref{sec:Discussion} discusses possible extensions of indirect image registration to the case when the template also needs to be recovered. It also outlines how indirect image registration relates to joint reconstruction and motion estimation in spatiotemporal imaging.

\section{Inverse problems, ill-posedness and variational regularization}\label{sec:InverseProblems}
The purpose of this section is to set some notations and concepts used throughout the paper.

\paragraph{Image reconstruction}
The goal in image reconstruction is to estimate some spatially distributed quantity (\emph{image}) from indirect noisy observations (\emph{measured data}). Stated mathematically, the aim is to reconstruct an \emph{image} $\truesignal \in \RecSpace$ from \emph{data} $\data \in\DataSpace$ where
\begin{equation}\label{eq:InvProb}  
     \data = \ForwardOp(\truesignal) + \noisedata. 
\end{equation}
Here, $\RecSpace$ (\emph{reconstruction space}) is the vector space of all possible images, so it is a suitable Hilbert space of functions defined on a fixed domain $\domain \subset \Real^n$. Next, $\DataSpace$ (\emph{data space}) is the vector space of all possible data, which for digitized data is a subset of $\Real^m$. Furthermore, $\ForwardOp \colon \RecSpace \to \DataSpace$ (\emph{forward operator}) models how a given image gives rise to data in absence of noise and measurement errors, and $\noisedata \in \DataSpace$ (\emph{data noise component}) is a sample of a $\DataSpace$--valued random element whose probability distribution is (\emph{data noise model}) assumed to be explicitly expressible in terms of $\ForwardOp(\truesignal) $.

\paragraph{Ill-posedness}
A naive approach for reconstructing the true (unknown) image $\truesignal$ is to solve the equation $\ForwardOp(\signal)= \data$. Often there are no solutions to this equation since measured data is not in the range of $\ForwardOp$ for a variety of reasons (noise, modeling errors, \etc).  This is commonly addressed by relaxing the notion of a solution by considering 
\begin{equation}\label{eq:LeastSquares}
  \min_{\signal \in\RecSpace} \DataDisc\bigl( \ForwardOp(\signal), \data \bigr).
\end{equation}  
The mapping $\DataDisc \colon \DataSpace \times \DataSpace \to \Real$ (\emph{data discrepancy functional}) quantifies the data misfit and a natural candidate is to choose it as a suitable affine transformation of the negative log likelihood of data. In such case, solving \cref{eq:LeastSquares} amounts to finding \ac{ML} solutions, which works well when \cref{eq:LeastSquares} has a unique solution (\emph{uniqueness}) that depends continuously on data (\emph{stability}). This is however not the case when  \cref{eq:InvProb} is ill-posed, in which case one needs to use regularization that introduces stability, and preferably uniqueness, by making use of prior knowledge about $\truesignal$.

\paragraph{Variational regularization}
The idea here is to add a penalization term to the objective functional in \cref{eq:LeastSquares} resulting in a variational problem of the type
\begin{equation}\label{eq:VarReg}
  \min_{\signal \in\RecSpace} \Bigl[ \mu \RegFunc(\signal) + \DataDisc\bigl( \ForwardOp(\signal), \data \bigr) \Bigr] 
  \quad\text{for some given $\mu\geq 0$.}
\end{equation}  
Such regularization methods have gained much attention lately, and especially so in imaging \cite{ScGrGrHaLe09}. The functional $\RegFunc \colon \RecSpace \to \Real$ introduces stability, and preferably also uniqueness, often by encoding some priori known regularity property of $\truesignal$, \eg, assuming $\RecSpace \subset \LpSpace^2(\domain,\Real)$ and taking the $\LpSpace^2$-norm of the gradient magnitude (Dirichlet energy) is known to produce smooth solutions whereas taking the $\LpSpace^1$-norm of the gradient magnitude (\acl{TV}) yields solutions that preserve edges while smooth variations may be suppressed \cite{BuOs13}.

\section{Reproducing kernel Hilbert spaces}\label{sec:RKHS}
The diffeomorphic deformations constructed in the \ac{LDDMM} framework (\cref{sec:LDDMMFlows}) will make use of velocity fields that at each time point are contained in a \acf{RKHS}. The gradient computations in \cref{sec:Gradient} also rely on this assumption. The short introduction to the theory of \ac{RKHS} provided here gives the necessary background needed in subsequent sections.

The theory of \aclp{RKHS} was initialized in the 1940s for spaces of scalar-valued functions \cite{Ar50}, which was later extended to spaces of functions with values in locally convex topological spaces \cite{Sc64}. It has lately gained significant interest due to applications in machine learning \cite{ScSm01,BeTh04, Mu12}. The starting point is an abstract Hilbert space $\VectorSpace$ whose elements are functions defined on a fixed domain $\domain \subset \Real^n$ that take values in a real Hilbert space $\VectorSpaceOther$. Such a space is a \ac{RKHS} if evaluation functionals $\delta^a_x \colon  \VectorSpace \to \Real$ defined as $\delta^a_x(\vfield):=\bigl\langle \vfield(x),a \bigr\rangle_{\VectorSpaceOther}$  are bounded for every $x \in \domain$ and $a \in  \VectorSpaceOther$. One way to construct a \ac{RKHS} is to specify a \emph{reproducing kernel function}. It maps a pair of points in $\domain$ into the Banach space
$\LinearSpace(\VectorSpaceOther,\VectorSpaceOther)$ of bounded linear operators on $\VectorSpaceOther$, \ie, an operator $\Koperator \colon \domain \times \domain \to \LinearSpace(\VectorSpaceOther,\VectorSpaceOther)$ such that
\begin{enumerate}[(i)]
\item $\Koperator(\Cdot,x)(a) \in \VectorSpace$ for all $x \in \domain$ and $a \in \VectorSpaceOther$.
\item The \emph{reproducing property} holds for $\Koperator$, \ie, if $x \in \domain$ then 
\begin{equation}\label{eq:IpEquality}
  \bigl\langle \vfield(x), a \bigr\rangle_{\VectorSpaceOther} 
  = \bigl\langle \vfield, \Koperator(\Cdot ,x)(a) \bigr\rangle_{\VectorSpace}
       \quad\text{for any $\vfield \in \VectorSpace$ and $a \in \VectorSpaceOther$.} 
\end{equation}    
\end{enumerate}

An important characterization is that a Hilbert space $\VectorSpace$ of $\VectorSpaceOther$-valued functions that is continuously embedded in $\Smooth(\domain,\VectorSpaceOther)$ is a \ac{RKHS} if and only if it has a continuous reproducing kernel $\Koperator \colon \domain \times \domain \to \LinearSpace(\VectorSpaceOther,\VectorSpaceOther)$. 

\paragraph{Square integrable vector fields}
If $\VectorSpace$ is a Hilbert space of vector fields on $\Real^n$ that is admissible (see \cref{sec:LDDMMFlows}), then it is in particular continuously embedded in $\Smooth(\domain,\Real^n)$. Hence, it is a \ac{RKHS} if and only if it has a continuous positive definite reproducing kernel $\Koperator \colon \domain\times \domain \to \LinearSpace(\Real^{n}, \Real^{n})$, which in turn can be represented by a continuous positive definite function $\kernel \colon \domain \times \domain \to \Matrix_{+}^{n \times n}$. Here, $\Matrix^{n,m}$ denotes the vector space of all $(n \times m)$ matrices and $\Matrix^{n,m}_+$ denotes those matrices that are positive definite. Assuming in addition that  $\VectorSpace$ is a \ac{RKHS} that is continuously embedded in $L^2(\domain,\Real^n)$, then there exists an continuous imbedding $\iota \colon \VectorSpace \to \LpSpace^2(\domain, \Real^n)$. Hence, by Riesz theorem there exist $\iota^* \colon \LpSpace^2(\domain, \Real^n) \to \VectorSpace$ such that 
\begin{equation}\label{eq:Inclusion}
  \bigl\langle \iota(\vfield), \vfieldother \bigr\rangle_{\LpSpace^2(\domain, \Real^n)} = 
  \bigl\langle \vfield, \iota^*(\vfieldother) \bigr\rangle_{\VectorSpace}
  \quad\text{for all $ \vfield, \vfieldother \in \VectorSpace$}.
\end{equation}  
A natural task is to further examine the relation between the $\LpSpace^2$-inner product and the \ac{RKHS} inner product on $\VectorSpace$. As we next show, one can prove that 
\begin{equation}\label{eq:VIP}
  \langle \vfieldother, \vfield \rangle_{\LpSpace^2} 
   = \Bigl\langle \vfield, \int_{\domain} \Koperator(\Cdot, x)\bigl( \vfieldother(x) \bigr)\dint x \Bigr\rangle_{\VectorSpace}.
\end{equation}

To prove \cref{eq:VIP}, observe first that by \cref{eq:Inclusion} 
\[  \bigl\langle \iota(\vfield), \iota(\vfieldother) \bigr\rangle_{\LpSpace^2(\domain, \Real^n)} = 
  \Bigl\langle \vfield, \iota\bigl(\iota^*(\vfieldother)\bigr) \Bigr\rangle_{\VectorSpace}
  \quad\text{for any $\vfield, \vfieldother \in \VectorSpace$.}
\]
Hence, \cref{eq:VIP} follows directly from 
\begin{equation}\label{eq:InclusionAdjoint}
  \iota\bigl(\iota^*(\vfieldother)\bigr) = \int_{\domain} \Koperator(\Cdot, x)\bigl( \vfieldother(x) \bigr)\dint x.
\end{equation}
To prove \cref{eq:InclusionAdjoint}, observe first that it is sufficient to prove it point wise, \ie, to show that 
\begin{equation}\label{eq:InclusionAdjointPointwise}
  \Bigl\langle \iota\bigl(\iota^*(\vfieldother)\bigr)(y), a \Bigr\rangle_{\Real^n} 
  = \int_{\domain} \Bigl\langle \Koperator(y, x)\bigl( \vfieldother(x) \bigr), a \Bigr\rangle_{\Real^n} \dint x
  \quad\text{holds for any $y \in \domain$ and $a \in \Real^n$.}
\end{equation}
The equality in \cref{eq:InclusionAdjointPointwise} follows from the next calculation:
\begin{align*}
  \Bigl\langle \iota\bigl(\iota^*(\vfieldother)\bigr)(y), a \Bigr\rangle_{\Real^n}
  &= \bigl[ \text{By \cref{eq:IpEquality}} \bigr]
    =  \Bigl\langle \iota^*\bigl(\iota(\vfieldother)\bigr), \Koperator(\Cdot ,y)(a) \Bigr\rangle_{\VectorSpace} 
  \! = \! \bigl[ \text{By \cref{eq:Inclusion}} \bigr] \!\!
    =  \Bigl\langle \iota(\vfieldother), \iota\bigl( \Koperator(\Cdot ,y)(a) \bigr) \Bigr\rangle_{\LpSpace^2} \\
  &= \int_{\domain} \Bigl\langle \iota(\vfieldother)(x), \iota\bigl( \Koperator(\Cdot,y)(a) \bigr)(x) \Bigr\rangle_{\Real^n}\dint x
  = \int_{\domain} \Bigl\langle \vfieldother(x), \Koperator(x,y)(a) \Bigr\rangle_{\Real^n}\dint x.
\end{align*}

\acreset{LDDMM}
\section{Shape theory}\label{sec:ShapeTheory}
The overall aim is to develop a quantitative framework for studying shapes and their variability. The approach considered here is based on deformable templates and it can be traced back to work by D'Arcy Thompson in beginning of 1900's \cite{Ar45}. Shape theory based on deformable templates is now an active field of research \cite{GrMi07,Yo10,TrYo11,Mi1TrYo5}.

The starting point is to specify a \emph{shape space}, which here refers to a set whose elements are image features with shapes that are to be analysed. There is no formal definition of what constitutes an image feature, but intuitively it is a representation of the image that retains information relevant for its interpretation. Hence, the choice of shape space is highly task dependent and examples are landmark points, curves, surfaces, and scalar/vector/tensor valued images. 
This is followed by specifying a \emph{set of deformations} whose elements map the shape space into itself and thereby model shape changes. Clearly, the identity mapping preserves all aspects of shape. And in this context, it is therefore the ``smallest'' deformation. A key step will be to introduce a metric on the set of deformations that induces a shape similarity measure in shape space.

\subsection{Shape space}\label{sec:ShapeSpace}
There are many image features important for interpretation \cite{AwHa16}, see also \cite{Li13,Li15} for an axiomatic characterization of features natural for visual perception. For (indirect) image registration, it is important that such features are deformable. Namely, it should be feasible to act on it by means of a deformation. 

This paper considers grey-scale images, more precisely $\RecSpace = \SBV(\domain,\Real) \bigcap \LpSpace^{\infty}(\domain,\Real)$. The requirement that elements in $\RecSpace$ are in $\LpSpace^{\infty}(\domain,\Real)$ (essentially bounded) is reasonable for images since grey-scale values are bounded. Furthermore, $\SBV(\domain,\Real)$ denotes the set of real-valued functions of special bounded variation over $\domain \subset \Real^n$. This space was first introduced in \cite{AmDe88} (see also \cite{DeAm89}) and its formal definition is somewhat involved, see \cite[Definition~22 on p.~141]{Vi09}. Intuitively, these are functions of bounded variation whose singular part of the distributional derivative is supported by an $(n-1)$-dimensional rectifiable set in $\domain$. In particular, such  functions are not necessarily smooth. The latter is important since images may contain edges. Occasionally $\RecSpace$ will be equipped with an $\LpSpace^2$--inner product.

\subsection{Set of deformations}
Deformations are operations that transform elements in a shape space. In our setting, deformations is represented by mappings from the image domain $\domain \subset \Real^n$ into $\Real^n$ along with an action describing how they deform deformable objects. The set of deformations should be rich enough to capture the shape variability arising in the application, yet restricted enough to allow for a coherent mathematical theory. 

Considering only rigid body motions is often too limited since many applications involve non-rigid shape deformations. Furthermore, composing two deformations should yield a valid deformation, \ie, the set $\LieGroup$ of deformations closed under composition. Here, the identity mapping becomes the natural ``zero'' deformation. Furthermore, it should also be possible to reverse a deformation, \ie, $\LieGroup$ closed under inversion. Taken together, this implies that $\LieGroup$ \emph{forms a group under the group law given by composition of functions}. The group structure also implies that a deformation transforms an element in the shape space by means of a group action. Finally, often it also makes sense to assume that deformable objects do not tear, fold or collapse the image, \ie, the deformation preserving the topology. 

$\Smooth^p$-diffeomorphisms (see below) form a group of non-rigid transformations satisfying the above requirements. These mappings can also act on real valued functions as in \cref{sec:GroupAction}.
\[ 
  \DiffG^p(\Real^n) := 
    \Bigl\{ 
      \diffeo \in \Smooth^p(\Real^n,\Real^n) : 
      \text{$\diffeo$ is bijective with $\diffeo^{-1} \in \Smooth^p(\Real^n,\Real^n)$}
    \Bigr\}.
\]
However, formulating and proving mathematical results relevant for (indirect) image registration, and the need to perform computations with such objects, require one to further restrict the set of diffeomorphisms, \ie,  considering suitable sub-groups of $\DiffG^p(\Real^n)$.

\subsection{Large diffeomorphic deformations}\label{sec:LargeDeformations}
A natural approach is to consider deformations given by additively perturbing the identity map with a vector field that is sufficiently regular \cite{OkChDoRaBa16}. The resulting set of deformations is closed under composition, \ie, it forms a semi-group. Furthermore, such a deformation can be seen as linear approximation of a diffeomorphism, but it is not necessarily invertible unless the aforementioned vector field is sufficiently small and regular \cite[Proposition 8.6]{Yo10}. Hence, this is not a suitable framework for diffeomorphic image registration where the template and target have large differences. 

One approach to address the above issue of invertibility was presented in \cite{ChRaMi96}. The idea is to consider transformations given as a composition of infinitesimally small linearised deformations. More precisely, diffeomorphic large deformations are obtained  by integrating the identity map along a velocity field (curve of vector fields). If properly designed, the velocity field induces an isotopy (a curve of diffeomorphisms). Thereby, the set of velocity fields parametrises the set of deformations and a natural requirement is that velocity fields under consideration have integrable trajectories. This provides a framework for generating a rich sets of deformations, which then can be adapted to specific image registration problems by appropriately choosing the velocity fields. 

\emph{The indirect image registration method in this paper is based on \ac{LDDMM},} see \cref{rem:AltLDDMM} for a brief survey of alternative frameworks for large diffeomorphic deformations. In \ac{LDDMM}, the velocity field is square integrable and its corresponding vector field at each time is contained in a fixed Hilbert space of vector fields that is an admissible \ac{RKHS}. The deformations are generated by integrating the identity map along trajectories of such velocity fields and these form a sub-group of $\DiffG^1(\Real^n)$, see \cref{sec:LDDMMFlows} for more details. As shown there, this group can furthermore be equipped with a metric, which by a group action (\cref{sec:GroupAction}) induces a metric on the shape space that can be used to quantify shape similarity (\cref{sec:IndirectImageReg}).

\subsubsection{The \ac{LDDMM} framework}\label{sec:LDDMMFlows}
The starting point in the \ac{LDDMM} framework for large diffeomorphic deformations is to define the vector space of velocity fields that generate the diffeomorphisms. An important point is to show that these diffeomorphisms for a group. Another is to define a right invariant metric so that the group is a complete metric space \wrt this metric. The metric is later used to induce a metric on the shape space, which in turn is used as a regularizer in indirect image registration (\cref{sec:IndirectImageReg}). A key part of showing that indirect image registration is a well-defined regularization method (\cref{sec:RegularizingProperty}) is to relate convergence of velocity fields to convergence of corresponding diffeomorphisms. 

\paragraph{Admissible vector fields and the flow equation}
The idea in \ac{LDDMM} is to generate large diffeomorphic deformations by solving a flow equation. This is only possible when the velocity field enjoys certain regularity that ensures the flow equation has a unique solution.
More precisely, fix a domain $\domain \subset \Real^n$ and let $\LieAlgebra\subset \VectorFields_0^p(\domain,\Real^n)$ denote a $p$--\emph{admissible} vector space of vector fields, \ie, $\LieAlgebra$ can be continuously embedded into $\VectorFields_0^p(\domain,\Real^n)$ under the topology induced by the $\Vert \Cdot \Vert_{p,\infty}$--norm. Furthermore, let $\FlowSpace{q}$ denote the vector space of velocity fields that are $\LpSpace^q$-integrable in time and belong to $\LieAlgebra$ at any time point, \ie, 
\begin{equation}\label{eq:FlowSpace}
  \FlowSpace{q} := 
     \Bigl\{ 
       \velocityfield \colon [0,1] \times \domain \to \Real^n : \velocityfield(t,\Cdot) \in \LieAlgebra
       \text{ and $\Vert \velocityfield \Vert_{\FlowSpace{q}} < \infty$}
     \Bigr\}
\end{equation} 
with 
\[
\Vert \velocityfield \Vert_{\FlowSpace{q}} :=  
      \biggl( \int_0^1 \bigl\Vert \velocityfield(t,\Cdot) \bigr\Vert^q_{\LieAlgebra}\dint t \biggr)^{1/q}
    \quad\text{for an integer $q\geq 1$.}
\]
This is a normed vector space and if $\LieAlgebra$ is a Hilbert space, then $\FlowSpace{2}$ can also be equipped with a Hilbert space structure. Focus will henceforth be on the Hilbert space $\FlowSpace{2}$, but some statements will involve $\FlowSpace{1}$. The following theorem forms the basis of the \ac{LDDMM} approach, see \cite[Appendix~C]{Yo10} for its proof.
\begin{theorem}[Existence of flows]\label{thm:FlowExistence}
Given $\velocityfield \in \FlowSpace{1}$, there exists a unique continuous curve $[0,1] \ni t \mapsto \gelement{0,t}{\velocityfield} \in \DiffG^p(\Real^n)$ that satisfies the following flow equation:
\begin{equation}\label{eq:FlowEq}
 \begin{cases} 
    \dfrac{d}{d t}\gelement{0,t}{\velocityfield}(x) = \velocityfield\bigl( t, \gelement{0,t}{\velocityfield}(x) \bigr) & \\[0.75em]
    \gelement{0,0}{\velocityfield}(x)=x. &  
   \end{cases} 
   \quad\text{for all $x \in \domain$.}
\end{equation}
\end{theorem}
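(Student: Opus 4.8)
The plan is to establish existence and uniqueness of the flow by a fixed-point argument on the associated integral equation, exploiting the $p$-admissibility of $\LieAlgebra$ to control the vector fields in $\VectorFields_0^p(\domain,\Real^n)$ uniformly in time. First I would reformulate the flow equation \cref{eq:FlowEq} as the integral equation $\gelement{0,t}{\velocityfield}(x) = x + \int_0^t \velocityfield\bigl(s, \gelement{0,s}{\velocityfield}(x)\bigr)\dint s$, so that a continuous curve $t \mapsto \gelement{0,t}{\velocityfield}$ solving this is automatically $C^1$ in $t$ and solves \cref{eq:FlowEq}. Since $\LieAlgebra \hookrightarrow \VectorFields_0^p(\domain,\Real^n)$ continuously, there is a constant $C$ with $\Vert \velocityfield(t,\Cdot)\Vert_{p,\infty} \leq C \Vert \velocityfield(t,\Cdot)\Vert_{\LieAlgebra}$, and because $\velocityfield \in \FlowSpace{1}$ the map $t \mapsto \Vert\velocityfield(t,\Cdot)\Vert_{\LieAlgebra}$ is in $\LpSpace^1([0,1])$; in particular each $\velocityfield(t,\Cdot)$ is bounded and (globally, since $p \geq 1$) Lipschitz with a time-dependent Lipschitz constant $\ell(t) := C\Vert\velocityfield(t,\Cdot)\Vert_{\LieAlgebra} \in \LpSpace^1([0,1])$.

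The core step is to run the Picard iteration in the Banach space $\Smooth\bigl([0,1], \Smooth_b(\domain,\Real^n)\bigr)$ (bounded continuous vector-field-valued curves), or equivalently to apply the Banach fixed-point theorem to the operator $(\mathcal{F}\diffeo)(t,x) := x + \int_0^t \velocityfield(s,\diffeo(s,x))\dint s$ after reweighting the sup-norm in $t$ by a factor like $e^{-\lambda \int_0^t \ell(s)\dint s}$ to absorb the Lipschitz contribution. This gives a unique continuous curve $t \mapsto \gelement{0,t}{\velocityfield}$ of bounded continuous maps; differentiating the integral equation (legitimate at a.e. $t$ by Lebesgue's differentiation theorem since the integrand is $\LpSpace^1$ in $s$) recovers \cref{eq:FlowEq}. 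Then I would show each $\gelement{0,t}{\velocityfield}$ is a $\Smooth^p$-diffeomorphism: invertibility follows by solving the flow backwards from time $t$ (the reversed velocity field is again admissible), giving $\gelement{t,0}{\velocityfield}$ with $\gelement{0,t}{\velocityfield}\circ\gelement{t,0}{\velocityfield} = \gelement{t,0}{\velocityfield}\circ\gelement{0,t}{\velocityfield} = \Id$ by uniqueness applied to the concatenated curve. For the $\Smooth^p$-regularity in the space variable one differentiates the integral equation formally in $x$ up to order $p$, obtaining a linear flow equation for the Jacobian (and higher derivatives) whose coefficients involve derivatives of $\velocityfield(t,\Cdot)$ up to order $p$, which are bounded uniformly in $t$ after integration because of $p$-admissibility; a further Picard/Grönwall argument closes these, and a Liouville-type identity $\partial_t \Det D\gelement{0,t}{\velocityfield}(x) = \Div\velocityfield(t,\gelement{0,t}{\velocityfield}(x))\,\Det D\gelement{0,t}{\velocityfield}(x)$ shows the Jacobian determinant stays strictly positive, so the inverse is also $\Smooth^p$.

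I expect the main obstacle to be handling the merely $\LpSpace^1$-in-time (rather than continuous or bounded) regularity of $\velocityfield$: one cannot invoke the classical Picard–Lindelöf theorem directly and must work with the Carathéodory version, being careful that all the $t$-integrals of $\ell(t)$ and of higher-order seminorms are finite and that the differentiation-under-the-integral and chain-rule steps are justified almost everywhere. The second delicate point is propagating $\Smooth^p$ control of the spatial derivatives through the iteration while keeping the estimates uniform in $t \in [0,1]$ — this is exactly where the embedding $\LieAlgebra \hookrightarrow \VectorFields_0^p(\domain,\Real^n)$ (as opposed to mere $\LpSpace^2$-boundedness) is indispensable, and it is the reason the statement asserts the flow lands in $\DiffG^p(\Real^n)$ rather than only in $\DiffG^1$. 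Since this is precisely \cite[Appendix~C]{Yo10}, I would for the paper simply cite that reference and record the integral-equation reformulation and the admissibility-driven uniform bounds as the facts we shall reuse in \cref{sec:RegularizingProperty}.
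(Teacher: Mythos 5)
The paper offers no proof of this theorem—it is imported directly from \cite[Appendix~C]{Yo10}—and your outline is a faithful reconstruction of exactly that standard Carath\'eodory/Picard argument (integral-equation reformulation, weighted fixed point with $\LpSpace^1$-in-time Lipschitz constant, backward flow for invertibility, linear variational equations for the $\Smooth^p$ regularity), ending with the same citation the paper uses. The only loose phrasing is the claim that a solution of the integral equation is ``automatically $C^1$ in $t$'': with $\velocityfield$ merely $\LpSpace^1$ in time the curve is only absolutely continuous and satisfies \cref{eq:FlowEq} almost everywhere, a point you in fact acknowledge later when invoking the Carath\'eodory framework.
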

By \cref{thm:FlowExistence}, $\gelement{s,t}{\velocityfield} := \gelement{0,t}{\velocityfield} \circ (\gelement{0,s}{\velocityfield})^{-1}$ becomes a well defined element in $\DiffG^p(\Real^n)$ for any $s, t \in [0,1]$, and by \cite[Proposition~C.6]{Yo10} one gets 
\begin{equation}\label{eq:phiRelations}
    \gelement{s,t}{\velocityfield} = \gelement{\tau,t}{\velocityfield} \circ \gelement{s,\tau}{\velocityfield} 
    \quad\text{and}\quad
    (\gelement{s,t}{\velocityfield})^{-1} = \gelement{t,s}{\velocityfield} 
  \qquad\text{for any $0 \leq s,\tau,t \leq 1$.}
\end{equation}

\paragraph{Diffeomorphisms generated by flows}
Assume $\LieAlgebra\subset \VectorFields_0^p(\domain,\Real^n)$ is a $p$-admissible Banach space and define \begin{equation}\label{eq:DeforSet}
 \DiffeoGroup := \Bigl\{ \gelement{0,1}{\velocityfield} \colon \domain \to \Real^n : 
      \text{$\gelement{0,1}{\velocityfield}$ solves \cref{eq:FlowEq} with 
      $\velocityfield \in \FlowSpace{1}$} 
   \Bigr\}.
\end{equation}
As already concluded, $\DiffeoGroup \subset \DiffG^p(\domain)$ is a sub-group. Next, the norm on $\FlowSpace{1}$ can be used to define a (right invariant) metric $\LieGroupMetric \colon \DiffeoGroup \times \DiffeoGroup \to \Real_+$ on $\DiffeoGroup$ as 
\begin{equation}\label{eq:DiffMetric1}
  \LieGroupMetric(\diffeo,\diffeoother) := \!\!
  \inf_{\substack{\velocityfield \in \FlowSpace{1} \\ \diffeoother = \diffeo \circ \gelement{0,1}{\velocityfield}}}\, 
     \Vert \velocityfield \Vert_{\FlowSpace{1}}
  \quad\text{for $\diffeo,\diffeoother \in \DiffeoGroup$.}
\end{equation}
With this construction, $(\DiffeoGroup,\LieGroupMetric)$ is a complete metric space whenever $\LieAlgebra$ is $p$-admissible \cite[Theorem~8.15]{Yo10}. The right invariance of the metric is important since
\[ 
    \LieGroupMetric(\diffeo,\diffeoother) = \LieGroupMetric(\diffeo \circ \varphi,\diffeoother \circ \varphi)
    \quad\text{for any $\varphi \in \DiffeoGroup$.}
\]
Furthermore, with this metric one can prove existence of a minimizing velcocity field between two diffeomorphisms in $\DiffeoGroup$. More precisely, if $\LieAlgebra$ is $p$-admissible, then by \cite[Theorems~8.18 and 8.20]{Yo10}, for any $\diffeo,\diffeoother \in \DiffeoGroup$ there exists a velocity field $\velocityfield \in \FlowSpace{2}$ such that 
  \begin{equation}\label{eq:DiffMetric2}
    \LieGroupMetric(\diffeo,\diffeoother) = \Vert \velocityfield\Vert_{\FlowSpace{2}}
      \quad\text{and}\quad \diffeoother = \diffeo \circ \gelement{0,1}{\velocityfield}.
\end{equation}
Hence, when computing the metric in \cref{eq:DiffMetric1}, the infimum over $\FlowSpace{1}$ can be replaced by a minimum over the Hilbert space $\FlowSpace{2}$.

Developing a theory for image registration based on arbitrary admissible spaces is difficult, mainly due to issues related to assigning topological and smooth structures on $\DiffeoGroup$. See \cref{sec:TopoGDiff} for a discussion. This paper considers admissible Hilbert space of vector fields $\LieAlgebra$ constructed from \ac{RKHS} theory (\cref{sec:RKHS}) that are continuously embedded in $\LpSpace^2(\domain,\Real^n)$. The specific application to tomography in \cref{sec:2DCT} makes use of a diagonal Gaussian kernel.
\begin{remark}
It is possible to use other \ac{RKHS} kernels than diagonal ones as long as the kernel is represented by a function continuously differentiable in both variables up to order $p$, in which case $\LieAlgebra \subset \VectorFields^p(\Real^n, \Real^n)$ \cite[Theorem~9]{Vi09}. See \cite{MiGl14} for admissible vector fields constructed using other non-scalar matrix-valued kernels. An alternative way to construct admissible vector spaces is by introducing a norm defined through a differential (or pseudo-differential) operator $L$:
\[ \Vert \vfield \Vert_{\LieAlgebra}^2 
      := \int_{\Real^n} \bigl\langle L(\vfield)(x), \vfield(x) \bigr\rangle_{\Real^n}\,\dint x. 
\]
As an example, choosing $L := (\Id-\Laplace)^s$ with $\Laplace$ as the Laplacian operator yields the Sobolev space $\LieAlgebra=\Sobolev^s(\Real^n,\Real^n)$. See \cite[Section~9.2]{Yo10} for more on building admissible vector spaces from operators, and in particular \cite[Theorem 9.12]{Yo10} for the relation to \ac{RKHS} theory.
\end{remark}

\paragraph{Strong and weak convergence results}
The aim here is to relate convergence of vector fields to convergence of corresponding diffeomorphisms. The next result shows that weak convergence of velocity fields implies uniform convergence of corresponding diffeomorphism on every compact set. This is an important part for proving the results in \cref{sec:RegularizingProperty}. The precise statement is given below, see \cite[Theorem~3 on p.~12]{Vi09} for the proof.
\begin{theorem}[Uniform convergence of compact sets]\label{thm:Thm3Vi09}
Let $\LieAlgebra \subset \VectorFields^p(\Real^n, \Real^n)$ be $p$--admissible with $p \geq 1$.  If $\velocityfield_k \to \velocityfield$ in the $\FlowSpace{1}$-topology (strong convergence of velocity fields), then $\Diff^j(\gelement{0,t}{\velocityfield_k}) \to \Diff^j(\gelement{0,t}{\velocityfield})$ uniformly on any compact subset of $\domain$ for any $t \in [0,1]$ and $j \leq p$. Likewise, if $\velocityfield_k \rightharpoonup \velocityfield$ in the $\FlowSpace{2}$-topology (weak convergence of velocity fields), then $\Diff^j(\gelement{0,t}{\velocityfield_k}) \to \Diff^j(\gelement{0,t}{\velocityfield})$ uniformly on any compact subset of $\domain$ for any $t \in [0,1]$ and $j \leq p-1$. 
\end{theorem}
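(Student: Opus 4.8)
The plan is to run everything off the integral form of the flow equation \cref{eq:FlowEq}, namely $\gelement{0,t}{\velocityfield}(x)=x+\int_0^t\velocityfield\bigl(s,\gelement{0,s}{\velocityfield}(x)\bigr)\dint s$, using $p$-admissibility only through a fixed constant $C>0$ with $\Vert\vfield\Vert_{p,\infty}\le C\Vert\vfield\Vert_{\LieAlgebra}$ for all $\vfield\in\LieAlgebra$, so that $\velocityfield(s,\Cdot)$ and all its spatial derivatives up to order $p$ are controlled by $\Vert\velocityfield(s,\Cdot)\Vert_{\LieAlgebra}$, a function that is $\LpSpace^1$ (resp.\ $\LpSpace^2$) in $s$. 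The workhorse is Gr\"onwall's inequality. For $j=0$ in the strong case, subtract the integral equations for $\gelement{0,t}{\velocityfield_k}$ and $\gelement{0,t}{\velocityfield}$, add and subtract $\velocityfield\bigl(s,\gelement{0,s}{\velocityfield_k}(x)\bigr)$, estimate the two resulting terms by the spatial Lipschitz constant of $\velocityfield(s,\Cdot)$ times $\bigl|\gelement{0,s}{\velocityfield_k}(x)-\gelement{0,s}{\velocityfield}(x)\bigr|$ and by $\bigl\Vert\velocityfield_k(s,\Cdot)-\velocityfield(s,\Cdot)\bigr\Vert_{p,\infty}$, and apply Gr\"onwall to obtain $\sup_{x}\bigl|\gelement{0,t}{\velocityfield_k}(x)-\gelement{0,t}{\velocityfield}(x)\bigr|\le C'\,\Vert\velocityfield_k-\velocityfield\Vert_{\FlowSpace{1}}\to 0$.

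For $1\le j\le p$ in the strong case, I would differentiate the flow equation in $x$: the Jacobian $\Diff\gelement{0,t}{\velocityfield}(x)$ solves the linear variational equation with right-hand side $\Diff\velocityfield\bigl(t,\gelement{0,t}{\velocityfield}(x)\bigr)\,\Diff\gelement{0,t}{\velocityfield}(x)$ and initial value $\Id$, and repeated use of the chain and product rules shows $\Diff^{j}\gelement{0,t}{\velocityfield}$ satisfies a linear inhomogeneous ODE whose coefficients are polynomials in $\Diff^{i}\velocityfield\bigl(t,\gelement{0,t}{\velocityfield}(\Cdot)\bigr)$ and $\Diff^{i}\gelement{0,t}{\velocityfield}$ with $i<j$. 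An induction on $j$ then goes through: the lower-order derivatives already converge uniformly on compacts and are uniformly bounded (Gr\"onwall, using that $\Vert\velocityfield_k\Vert_{\FlowSpace{1}}$ is bounded), while $\int_0^1\bigl\Vert\velocityfield_k(s,\Cdot)-\velocityfield(s,\Cdot)\bigr\Vert_{p,\infty}\dint s\le C\Vert\velocityfield_k-\velocityfield\Vert_{\FlowSpace{1}}\to0$ controls the coefficient differences, so subtracting the variational equations for index $k$ and the limit and invoking Gr\"onwall once more closes the step, up to $j=p$, the order at which $\LieAlgebra$ still controls spatial derivatives.

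The weak case cannot be handled this way, since weak convergence gives no bound on $\Vert\velocityfield_k-\velocityfield\Vert_{\FlowSpace{1}}$; I would argue by compactness instead. A weakly convergent sequence is bounded, so $\sup_k\Vert\velocityfield_k\Vert_{\FlowSpace{2}}<\infty$, hence $\sup_k\Vert\velocityfield_k\Vert_{\FlowSpace{1}}<\infty$; feeding this into the variational equations and Gr\"onwall yields, for each compact $Q\subset\domain$, uniform bounds on $\Diff^{j}\gelement{0,t}{\velocityfield_k}$ over $[0,1]\times Q$ for all $j\le p$, hence equicontinuity of $\Diff^{j}\gelement{0,t}{\velocityfield_k}$ for $j\le p-1$ (the bound on $\Diff^{j+1}$ serving as a modulus of continuity for $\Diff^{j}$, with equicontinuity in $t$ coming from the flow equation and Cauchy--Schwarz) --- this single lost derivative is exactly the ``$j\le p-1$'' in the statement. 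By Arzel\`a--Ascoli every subsequence has a further subsequence along which $\Diff^{j}\gelement{0,t}{\velocityfield_k}\to\Diff^{j}\psi_t$ uniformly on $[0,1]\times Q$ for all $j\le p-1$, and it remains to identify $\psi_t=\gelement{0,t}{\velocityfield}$. Passing to the limit in the integral equation for $x$, split $\velocityfield_k\bigl(s,\gelement{0,s}{\velocityfield_k}(x)\bigr)=\bigl[\velocityfield_k\bigl(s,\gelement{0,s}{\velocityfield_k}(x)\bigr)-\velocityfield_k\bigl(s,\psi_s(x)\bigr)\bigr]+\velocityfield_k\bigl(s,\psi_s(x)\bigr)$: the bracket is $\le C\Vert\velocityfield_k(s,\Cdot)\Vert_{\LieAlgebra}\sup_{s'}\bigl|\gelement{0,s'}{\velocityfield_k}(x)-\psi_{s'}(x)\bigr|$, whose time integral is at most $C\Vert\velocityfield_k\Vert_{\FlowSpace{1}}$ times a quantity tending to $0$ by uniform convergence on a compact set containing all trajectories, hence it vanishes; and $\velocityfieldother\mapsto\int_0^t\velocityfieldother\bigl(s,\psi_s(x)\bigr)\dint s$ is a bounded linear functional on $\FlowSpace{2}$ --- point evaluations on $\LieAlgebra$ are bounded (the \ac{RKHS} property, \cref{eq:IpEquality}) and $s\mapsto\psi_s(x)$ stays in a compact set --- so weak convergence gives $\int_0^t\velocityfield_k\bigl(s,\psi_s(x)\bigr)\dint s\to\int_0^t\velocityfield\bigl(s,\psi_s(x)\bigr)\dint s$. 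Hence $\psi_t(x)=x+\int_0^t\velocityfield\bigl(s,\psi_s(x)\bigr)\dint s$, so $\psi_t=\gelement{0,t}{\velocityfield}$ by uniqueness in \cref{thm:FlowExistence}; since the subsequential limit is always the same, the full sequence converges, and because uniform convergence of the $\gelement{0,t}{\velocityfield_k}$ together with uniform convergence of their derivatives forces the limit of the derivatives to be the derivative of the limit, $\Diff^{j}\gelement{0,t}{\velocityfield_k}\to\Diff^{j}\gelement{0,t}{\velocityfield}$ for all $j\le p-1$.

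I expect the main obstacle to be the weak case, specifically passing to the limit in the nonlinear composition term: two ingredients must cooperate --- the spatial Lipschitz bound on $\velocityfield_k$, uniform in $k$ because $\Vert\velocityfield_k\Vert_{\FlowSpace{2}}$ is bounded, to absorb the $k$-dependence of the inner argument, and the linearity of $\velocityfieldother\mapsto\int_0^t\velocityfieldother(s,\psi_s(x))\dint s$ on $\FlowSpace{2}$, resting on boundedness of point evaluations on $\LieAlgebra$, to exploit weak convergence of what remains. The one piece of real bookkeeping is tracking a single compact set that contains every trajectory $\gelement{0,s}{\velocityfield_k}(x)$ uniformly in $k$ and $s$, so that ``uniform on compacts'' is actually applicable; beyond that the argument is Gr\"onwall and induction.
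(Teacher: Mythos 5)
The paper does not prove \cref{thm:Thm3Vi09} itself; it defers entirely to the cited reference (Theorem~3 on p.~12 of [Vi09]), and your argument is precisely the standard proof given there: Gr\"onwall on the integral form of \cref{eq:FlowEq} and its variational equations for the strong case, and boundedness plus Arzel\`a--Ascoli plus identification of the subsequential limit via the split $\velocityfield_k(s,\gelement{0,s}{\velocityfield_k}(x))=[\velocityfield_k(s,\gelement{0,s}{\velocityfield_k}(x))-\velocityfield_k(s,\psi_s(x))]+\velocityfield_k(s,\psi_s(x))$ for the weak case, with the loss of one derivative coming exactly where you place it. The argument is sound; the only cosmetic point is that boundedness of point evaluations follows already from $p$-admissibility (the continuous embedding into $\VectorFields^p_0$), so no appeal to the \ac{RKHS} structure is needed.
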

The case $p=1$ is of specific interest:
\begin{enumerate}
\item $\velocityfield_k \to \velocityfield$ in $\FlowSpace{1}$, then $\Diff(\gelement{0,t}{\velocityfield_k}) \to \Diff(\gelement{0,t}{\velocityfield})$ uniformly on every compact set. 
\item $\velocityfield_k \rightharpoonup \velocityfield$ in $\FlowSpace{2}$, then $\gelement{0,t}{\velocityfield_k} \to \gelement{0,t}{\velocityfield}$ uniformly on every compact set.
\end{enumerate}
\begin{remark}\label{rem:ExpMap}
The \emph{flow map} is defined as the mapping $\FlowSpace{1} \ni \velocityfield \mapsto  \gelement{0,1}{\velocityfield} \in \DiffeoGroup$ and by \cref{thm:Thm3Vi09}, it is well-defined and continuous.
This mapping is also called the ``exponential map'' (not to be confused with the exponential map of a Riemannian metric). This exponential map is the analogue of the exponential map of a finite dimensional Lie group. One can  argue that the group $\DiffG^{\infty}(\domain)$ is an infinite dimensional Lie group with Lie algebra the Lie algebra $\LieAlgebra$ provided that $\domain$ is compact. However, the nice relations between a finite dimensional Lie group and its Lie algebra, cease to exist in the infinite dimensional setting. For instance the exponential map fails to be one-to-one or surjective near the identity, see \cite[Section~1.3.6]{Ba97} and  \cref{sec:TopoGDiff} for further discussion on these matters.  
\end{remark}

\subsection{Group actions}\label{sec:GroupAction}
Elements in  the group $\DiffeoGroup$ of diffeomorphisms in \cref{eq:DeforSet} can act on images $\RecSpace \subset \LpSpace^2(\Real^n, \Real)$, which are real-valued functions defined on some fixed bounded domain $\domain \subset\Real^n$. The group action defines the operator
\begin{equation}\label{eq:DeforOp}
  \DeforOpG \colon \DiffeoGroup \times \RecSpace \to \RecSpace.  
\end{equation}
In imaging, two group actions are natural:
\begin{description}
\item[Geometric deformation:]
  Deforms images without changing grey-scale values. This choice is suitable for inverse problems 
  where shape, not texture, is the main image feature.
  \begin{equation}\label{eq:Geometric}
   \DeforOpG(\diffeo, \template)  := \template \circ \diffeo^{-1} 
     \quad\text{for $\template \in \RecSpace$ and $\diffeo \in \DiffeoGroup$.}  
   \end{equation}
\item[Mass-preserving deformation:]
  Deformation changes intensities but ensures mass is preserved. This choice is suitable for inverse problems 
  where intensities are allowed to change while preserving mass.
  \begin{equation}\label{eq:MassPreserving}
    \DeforOpG(\diffeo, \template)  := 
        \bigl\vert \Diff(\diffeo^{-1}) \bigr\vert \, (\template \circ \diffeo^{-1}) 
     \quad\text{for $\template \in \RecSpace$ and $\diffeo \in \DiffeoGroup$.}    
   \end{equation}
   In the above, $\bigl\vert \Diff(\diffeo^{-1}) \bigr\vert$ denotes the Jacobian determinant of $\diffeo^{-1}$. 
\end{description}

\section{Indirect image registration}\label{sec:IndirectImageReg}
Consider the inverse problem in \cref{eq:InvProb} where the shape space $\RecSpace$ has elements representing grey-scale images defined over some fixed image domain $\domain \subset \Real^n$. Next, assume a priori that the true (unknown) target image $\truesignal \in \RecSpace$ in \cref{eq:InvProb} can be written as an admissible deformation of a given \emph{shape template} $\template \colon \domain \to \Real$, \ie, 
\begin{equation}\label{eq:ExactShapeAss}
  \truesignal = \DeforOpG(\truediffeo,\template)
     \quad\text{on $\domain$ for some $\truediffeo \in \DiffeoGroup$.}
\end{equation}   
The metric \cref{eq:DiffMetric1} can, through the action of the group $\DiffeoGroup$ on the shape space $\RecSpace$, be used to define a shape similarity measure between objects in $\RecSpace$. To define the latter, fix an element in shape space, the \emph{template} $\template \in \RecSpace$, and introduce the \emph{shape functional} $\ShapeFunc(\Cdot, \template) \colon \RecSpace \to \Real_+$ as
\begin{equation}\label{eq:ExactShapeFunc}
 \ShapeFunc(\signal, \template) :=  \!\!
 \inf_{\substack{\diffeo \in \DiffeoGroup \\ \signal = \DeforOpG(\diffeo, \template) }} \!\!\!\!
 \LieGroupMetric(\diffeo,\Id)^2
 \quad\text{for $\signal \in \RecSpace$.}
\end{equation}
It is now possible to register $\template$ onto an indirectly observed target $\truesignal$ by solving the following variational problem:
\begin{equation}\label{eq:VarRegExactShape}
 \begin{cases}
\displaystyle{\inf_{\signal\in \RecSpace}} \, \Bigl[ 
     \gamma \ShapeFunc(\signal, \template) 
     + \mu\RegFunc(\signal) 
     +  \DataDisc\bigl( \ForwardOp(\signal), \data \bigr) \Bigr] & \\[1em]
  \signal = \DeforOpG(\diffeo,\template)
     \quad\text{on $\domain$ for some $\diffeo \in \DiffeoGroup$.} &
 \end{cases}
\end{equation}    
In the above, $\DataDisc \colon \DataSpace \times \DataSpace \to \Real$ is the data discrepancy functional introduced in \cref{eq:LeastSquares}, $\RegFunc \colon \RecSpace \to \Real$ is the \emph{regularity functional} that encodes further regularity properties of the target $\truesignal$ that are known before hand, and  $\ShapeFunc(\Cdot , \template) \colon \RecSpace \to \Real_+$ is the \emph{shape functional} defined in \cref{eq:ExactShapeFunc}. Finally, $\gamma,\mu \geq 0$ are regularization parameters in which $\gamma$ regulates the influence of the a priori shape information and $\mu$ regulates the a priori regularity information.
  
The constraint in \cref{eq:VarRegExactShape} simply states that the solution must be contained in the orbit of $\template$ under the group action. Furthermore,
\begin{equation}\label{eq:ShapeRegOnOrbit}
  \ShapeFunc( \Cdot, \template) \circ \DeforOpG(\Cdot,\template) = \LieGroupMetric(\Id,\Cdot)^2
  \quad\text{on $\DiffeoGroup$.}
\end{equation}     
Hence, \cref{eq:VarRegExactShape} can be reformulated as 
\begin{equation}\label{eq:OptimG}
 \inf_{\diffeo \in  \DiffeoGroup} \Big[  
     \gamma \LieGroupMetric(\Id,\diffeo)^2
    + \MatchingFunctionalX  \circ \DeforOpG(\diffeo, \template) \Bigr]
\end{equation}
where $\MatchingFunctionalX  \colon \RecSpace \to \Real$ is given by 
\begin{equation}\label{eq:MatchingFunctionalX}  
\MatchingFunctionalX (\signal) :=  \mu \RegFunc(\signal) + \DataDisc\bigl( \ForwardOp(\signal), \data \bigr)
    \quad\text{for $\signal \in \RecSpace$.} 
\end{equation}

\subsection{Reformulating the variational problem}\label{sec:Reformulation}
If $\DiffeoGroup$ is a Riemannian manifold, then one could solve \cref{eq:OptimG} using a Riemannian gradient method \cite{RiWi12}. It is however often more convenient to work with operators defined on vector spaces and $\DiffeoGroup$ lacks a natural vector space structure, \eg, the point-wise sum of two diffeomorphisms is not necessarily a diffeomorphism. This poses both mathematical and numerical difficulties. 

On the other hand, diffeomorphisms in $\DiffeoGroup$ are generated by velocity fields in $\FlowSpace{1}$ 
through \eqref{eq:FlowEq} and by \cref{eq:DiffMetric2}, it is sufficient to compute $\LieGroupMetric(\Id,\Cdot)^2$ for diffeomorphisms generated by velocity fields in $\FlowSpace{2}$. Hence, there is a natural vector space associated to $\DiffeoGroup$, namely $\FlowSpace{2}$, and a straightforward re-formulation of \cref{eq:OptimG} as an optimization over $\FlowSpace{2}$ yields 
\begin{equation}\label{eq:OptimV}
 \inf_{\velocityfield \in  \FlowSpace{2}} \Big[  
     \gamma \Vert \velocityfield \Vert_{\FlowSpace{2}}^2
     + \MatchingFunctionalX  \circ \DeforOpX(\velocityfield,\template) \Bigr]
\end{equation}
where $\DeforOpX \colon \FlowSpace{2} \times \RecSpace \to \RecSpace$ is the \emph{deformation operator} that is defined as
\begin{equation}\label{eq:DeforOpX}
  \DeforOpX(\velocityfield,\template) := \DeforOpG (\gelement{0,1}{\velocityfield},\template)
     \quad\text{where $\gelement{0,1}{\velocityfield} \in \DiffeoGroup$ solves \cref{eq:FlowEq}.}
\end{equation}

If $\LieAlgebra$ is admissible, then \cref{eq:OptimG} and \cref{eq:OptimV} are equivalent as shown in \cite[Theorem~11.2 and Lemma~11.3]{Yo10}, \ie, 
\begin{equation}\label{eq:Reformulation}
   \truevelocityfield \in\FlowSpace{2} \text{ solves \cref{eq:OptimV}}
   \iff
   \signal^{\ast} := \DeforOpX(\truevelocityfield,\template) \text{ solves \cref{eq:VarRegExactShape}.}
\end{equation}   
Note also that $\LieAlgebra$ is often infinite dimensional Hilbert space, in which case \cref{eq:OptimV} is a minimization over the infinite dimensional Hilbert space $\FlowSpace{2}$.

\subsubsection{\acs{PDE} constrained formulation}\label{sec:PDEFormulations}
Note that evaluating $\velocityfield \mapsto \DeforOpX(\velocityfield,\template)$ requires solving the \ac{ODE} in \cref{eq:FlowEq}, so the variational problem in \cref{eq:OptimV} is an \ac{ODE} constrained optimization problem:
\begin{equation}\label{eq:LDDMM:ODE}
  \begin{cases}
  \displaystyle{\min_{\velocityfield \in \FlowSpace{2}}} 
    \biggl[ \gamma \Vert \velocityfield \Vert_{\FlowSpace{2}}^2 
        + \MatchingFunctionalX  \circ \DeforOpG\bigl( \diffeo(1,\Cdot),\template\bigr)  
    \biggr]
    & \\[1em]
  \displaystyle{\frac{d}{dt}} \diffeo(t,\Cdot) = \velocityfield\bigl( t, \diffeo(t,\Cdot) \bigr)   
  \quad\text{on $\domain$ and $t \in [0,1]$,} & \\[0.5em] 
  \diffeo(0,\Cdot)= \Id \quad\text{on $\domain$.} &
  \end{cases}  
\end{equation}
For image registration the above can also be formulated as solving a \ac{PDE} constrained optimization problem with a time dependent image, see, \eg, \cite[eq.~(1)]{HoJoSaStNi12}. As to be shown next, such a reformulation easily adapts to the indirect image registration setting. 

Re-formulating \cref{eq:LDDMM:ODE} as a \ac{PDE} constrained minimization is based on observing that \cref{eq:FlowEq} is strictly related (via the method of characteristics) to a continuity equation. More precisely, start by considering the time derivative of 
\begin{equation}\label{eq:LDDMM:Orbit}
  \signal(t,x) :=  \DeforOpG\bigl(\diffeo(t,\Cdot), \template\bigr)(x) 
  \quad\text{for $x \in \domain$ and $t \in [0,1]$.}
\end{equation}
Since $\diffeo(t,\Cdot)$ is given by $\velocityfield  \in \FlowSpace{2}$ through \cref{eq:FlowEq}, it should be possible to express the 
time derivative of the right-hand-side of \cref{eq:LDDMM:Orbit} entirely in terms of $\signal$ and $\velocityfield $. 
Furthermore, 
\[  \signal(0,\Cdot)=\template
   \quad\text{and}\quad
   \signal(1,\Cdot)=\DeforOpG\bigl(\diffeo(1,\Cdot),\template \bigr)
   \quad\text{on $\domain$.}
\]
Hence, \cref{eq:LDDMM:ODE} can be re-stated as a \ac{PDE} constrained minimization with
objective functional
\begin{equation}\label{eq:LDDMM:PDE:Objective}
    \velocityfield  \mapsto \gamma \Vert \velocityfield  \Vert_{\FlowSpace{2}}^2
             + \MatchingFunctionalX  \bigl( \signal(1, \Cdot) \bigr).
\end{equation}
Note that $\signal(1, \Cdot)$ above depends on $\velocityfield$ since it depends on $\diffeo(1,\Cdot)$ through 
\cref{eq:LDDMM:Orbit} and $\diffeo(1,\Cdot)$ depends on $\velocityfield$ by the \ac{ODE} constraint  in 
\cref{eq:LDDMM:ODE}.
The precise form for the \ac{PDE} depends on the choice of group action, see \cref{sec:GroupAction} for a list of some natural group actions.

\paragraph{Geometric group action}
Let $\signal \colon [0,1] \times \domain \to \Real$ be given by \cref{eq:LDDMM:Orbit} and consider 
the group action is given by \cref{eq:Geometric}. Then, 
\begin{equation}\label{eq:LDDMM:Geometric:FP}
  \signal\bigl(t, \diffeo(t,\Cdot)\bigr) = \template
  \quad\text{on $\domain$ for $t \in [0,1]$ where $\diffeo$ solves \cref{eq:FlowEq}.}
\end{equation}
Differentiating \cref{eq:LDDMM:Geometric:FP} \wrt time $t$ and using \cref{eq:FlowEq} yields
\begin{multline*}
\partial_t \signal\bigl(t, \diffeo(t,\Cdot)\bigr) 
   + \biggl\langle 
         \grad \signal\bigl(t,\diffeo(t,\Cdot)\bigr), 
         \dfrac{d \diffeo(t,\Cdot)}{d t}
      \biggr\rangle_{\!\!\Real^n} \\
= \partial_t \signal\bigl(t, \diffeo(t,\Cdot)\bigr) 
   + \Bigl\langle 
         \grad \signal\bigl(t,\diffeo(t,\Cdot)\bigr), 
         \velocityfield \bigl(t,\diffeo(t,\Cdot)\bigr) 
      \Bigr\rangle_{\Real^n}\!\!\!\! = 0.
\end{multline*}
Since the above holds on $\domain$, it is equivalent to
\begin{equation}\label{eq:GroupAction:Geom:PDE}
  \partial_t \signal(t, \Cdot) + \Bigl\langle \grad \signal(t,\Cdot), \velocityfield (t,\Cdot) \Bigr\rangle_{\Real^n} 
  \!\!\!\! = 0
  \quad\text{on $\domain$ for $t \in [0,1]$.}  
\end{equation}
Hence, using the geometric group action in \cref{eq:OptimV} yields the following \ac{PDE} constrained formulation: 
\begin{equation}\label{eq:LDDMM:Geom:PDE}
\begin{cases}
  \displaystyle{\min_{\velocityfield  \in  \FlowSpace{2}}} \Big[  
     \gamma \Vert \velocityfield  \Vert_{\FlowSpace{2}}^2
             + \MatchingFunctionalX  \bigl( \signal(1, \Cdot) \bigr)  
    \Bigr] & \\[1em]
  \partial_t \signal(t, \Cdot) + \Bigl\langle \grad \signal(t,\Cdot), \velocityfield (t,\Cdot) \Bigr\rangle_{\Real^n}\!\!\!\! = 0 
  \quad\text{on $\domain$ and $t \in [0,1]$,} & \\[0.5em] 
  \signal(0,\Cdot) = \template \quad\text{on $\domain$.}
\end{cases}
\end{equation}

\paragraph{Mass-preserving group action}
Let $\signal \colon [0,1] \times \domain \to \Real$ be given by \cref{eq:LDDMM:Orbit} and consider 
the group action is given by \cref{eq:MassPreserving}. Then, 
\begin{equation}\label{eq:LDDMM:MassPreserving:FP}
  \Bigl\vert \Diff\bigl(\diffeo(t,\Cdot) \bigr)\Bigr\vert \, \signal\bigl(t, \diffeo(t,\Cdot)\bigr) 
    = \template 
  \quad\text{on $\domain$ and $t \in [0,1]$ with $\diffeo$ solving \cref{eq:FlowEq}.}
\end{equation}
Differentiating \cref{eq:LDDMM:MassPreserving:FP} \wrt time $t$ yields
\begin{displaymath}
\partial_t \signal\bigl(t, \diffeo(t,\Cdot)\bigr) 
   + \nabla\cdot\biggl(
         \signal(t, \Cdot)\, 
         \dfrac{d \diffeo(t,\Cdot)}{d t}
      \biggr) 
= \partial_t \signal\bigl(t, \diffeo(t,\Cdot)\bigr) 
   + \Div\Bigl( 
         \signal(t, \Cdot)\, 
         \velocityfield \bigl(t,\diffeo(t,\Cdot)\bigr)
      \Bigr) = 0.
\end{displaymath}
The last equality above makes use of \cref{eq:FlowEq} and the definition of the divergence operator. 
Since the above holds on $\domain$, it is equivalent to
\begin{equation}\label{eq:GroupAction:MassPreserving:PDE}
  \partial_t \signal(t, \Cdot) +  \Div\bigl(\signal(t, \Cdot)\, \velocityfield (t, \Cdot) \bigr)= 0
  \quad\text{on $\domain$ for $t \in [0,1]$.}  
\end{equation}
Hence, using the mass-preserving group action in \cref{eq:OptimV} yields the following \ac{PDE} constrained formulation: 
\begin{equation}\label{eq:LDDMM:MassPreserving:PDE}
\begin{cases}
  \displaystyle{\min_{\velocityfield  \in  \FlowSpace{2}}} \Big[  
     \gamma \Vert \velocityfield  \Vert_{\FlowSpace{2}}^2
                + \MatchingFunctionalX  \bigl( \signal(1, \Cdot) \bigr)  
    \Bigr] & \\[1em]
  \partial_t \signal(t, \Cdot) +  \Div\bigl(\signal(t, \Cdot)\, \velocityfield (t, \Cdot) \bigr) = 0 
  \quad\text{on $\domain$ and $t \in [0,1]$,} & \\[0.5em] 
  \signal(0,\Cdot) = \template \quad\text{on $\domain$.}
\end{cases}
\end{equation}

\begin{remark}
\Acp{PDE} are frequently used for defining similarity measures between images. As an example, registration by optical flow handles shapes as boundaries of objects which are then treated as fluids \cite{BePeSc15}. See also \cite{SoDaPa13} for further examples from medical imaging. 

In our setting, the \ac{PDE} constraint encodes \emph{both} the \ac{ODE} in \cref{eq:FlowEq} and the diffeomorphic group action. 
An advantage with the \ac{ODE} constrained formulation is that these two components, the generative model for the diffeomorphisms and their action on images, are explicit whereas in the \ac{PDE} constrained formulation they are ``hidden'' in the \ac{PDE}. Another potential advantage relates to computational feasibility. Within the \ac{LDDMM} theory, there are several  numerical methods based on various characterizations of minimizers to \eqref{eq:OptimV}, see \eg, \cite{Yo10}. 
On the other hand, the \ac{PDE} constrained formulation may be more suitable for considering a setting where the velocity fields are non-smooth. As outlined in \cref{sec:LDDMMFlows}, admissibility was a key assumption for ensuring that \cref{eq:FlowEq} is uniquely solvable. In such case, the velocity field is Lipschitz \wrt to space uniformly in time. 
An alternative to the \ac{ODE} based arguments in \cref{sec:LDDMMFlows} is to use Cauchy-Lipschitz theory and classical \ac{PDE} arguments to prove existence and uniqueness for \cref{eq:FlowEq}. These \ac{PDE} based techniques extend to certain non-smooth cases, \eg, \cite{PeLi89,Am04} proves existence and uniqueness of a certain solution to \cref{eq:FlowEq} (regular Lagrangian flow) even in the case for Sobolev and BV vector fields, see \cite{Co17} for further details.
Finally, the \ac{PDE} constrained formulation may also be better suited in applications where the actual image trajectory is of interest alongside the final deformed image.
\end{remark}

\section{Regularizing properties}\label{sec:RegularizingProperty}
The goal here is to investigate the regularizing properties of the variational reconstruction scheme in \cref{eq:VarRegExactShape} for solving \eqref{eq:InvProb} under the simplification where $\mu=0$. This involves determining wether \cref{eq:VarRegExactShape} has solutions, if they are unique, and assessing stability and convergence properties of these solutions. 

Following regularization theory, a reconstruction scheme is a regularization if existence holds along with stability and convergence \cite[Chapter~3]{ScGrGrHaLe09} (see also the notion of ``well-defined regularization method'' in \cite{Gr10}). Hence, even though formal uniqueness is a desirable property, it is not required from a regularization scheme. The starting point is thus to state continuity of the deformation operator \wrt to the velocity field (\cref{sec:ConvTemplate}). This is followed by arguments showing that regularizing properties for indirect registration can be transferred to regularizing properties of \cref{eq:VarRegExactShape} (\cref{sec:SimpReg}). The analysis of indirect registration is concluded by studying existence (\cref{sec:Existence}), uniqueness (\cref{sec:Unique}), and stability \& convergence (\cref{sec:StabConv}). 

\subsection{Convergence of deformed templates}\label{sec:ConvTemplate}
The aim here is to couple weak convergence of velocity fields in $\FlowSpace{2}$ to $\LpSpace^2$--convergence of corresponding deformed templates in $\RecSpace = \SBV(\domain,\Real) \bigcap \LpSpace^{\infty}(\domain,\Real)$.
\begin{theorem}\label{thm:ConvDeforTemplateG}
Assume $\domain \subset \Real^n$ is a bounded domain, $\LieAlgebra \subset \Smooth^1_0(\domain,\Real^n)$ is an admissible Hilbert space, and $\RecSpace = \SBV(\domain,\Real) \bigcap \LpSpace^{\infty}(\domain,\Real)$. Furthermore, let $\template \in \RecSpace$ and consider a 
sequence $\{ \velocityfield_k \}_k \subset \FlowSpace{2}$ that converges weakly to some $\velocityfield \in \FlowSpace{2}$, \ie, $\velocityfield_k  \rightharpoonup \velocityfield$ in $\FlowSpace{2}$. Then, 
\begin{equation}\label{eq:ConvDeforTemplateG}
  \DeforOpG(\gelement{0,1}{\velocityfield_k} , \template) \to \DeforOpG(\gelement{0,1}{\velocityfield} , \template)
    \quad\text{in $\LpSpace^2(\domain,\Real)$.}
\end{equation}
In the above, $\DeforOpG \colon \DiffeoGroup \times \RecSpace \to \RecSpace$ is given by \cref{eq:Geometric} for geometric group action and by \cref{eq:MassPreserving} for mass preserving group action.
\end{theorem}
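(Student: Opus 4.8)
The plan is to deduce $\LpSpace^2$--convergence of the deformed templates from \cref{thm:Thm3Vi09} in two steps: first upgrade the weak convergence $\velocityfield_k \rightharpoonup \velocityfield$ into uniform convergence (on compacta) of the generated diffeomorphisms \emph{and} of their inverses, then absorb the lack of continuity of $\template \in \SBV(\domain,\Real) \cap \LpSpace^{\infty}(\domain,\Real)$ by a density argument, approximating $\template$ in $\LpSpace^2$ by smooth functions and using a uniform change--of--variables bound. Write $\diffeo_k := \gelement{0,1}{\velocityfield_k}$ and $\diffeo := \gelement{0,1}{\velocityfield}$. Being weakly convergent, $\{\velocityfield_k\}_k$ is bounded in $\FlowSpace{2}$, hence in $\FlowSpace{1}$; as $\LieAlgebra$ is continuously embedded in $\Smooth^1_0(\domain,\Real^n)$, a Gronwall estimate for the variational equation satisfied by $\Diff(\gelement{0,t}{\velocityfield_k})$ gives a uniform bound $\Vert \Diff(\diffeo_k^{\pm 1}) \Vert_{\infty} \leq C_1$ on the Jacobian matrices (the bound for $\diffeo_k^{-1}$ comes from running the flow of the time--reversed field $t \mapsto -\velocityfield_k(1-t,\Cdot)$, whose time--$1$ flow is $\diffeo_k^{-1}$ by \cref{eq:phiRelations}), whence the Jacobian determinants obey $\vert \Diff(\diffeo_k^{\pm 1}) \vert \leq C$ uniformly in $k$. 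Since the flows of $\FlowSpace{1}$--fields map $\domain$ onto itself, the change of variables $y = \diffeo_k^{-1}(x)$ gives $\Vert \signalother \circ \diffeo_k^{-1} \Vert_{\LpSpace^2(\domain)} \leq C^{1/2} \Vert \signalother \Vert_{\LpSpace^2(\domain)}$ for every $\signalother \in \LpSpace^2(\domain,\Real)$ (and likewise with $\diffeo$). Finally, $\velocityfield_k \rightharpoonup \velocityfield$ forces the time--reversed fields to converge weakly too, so \cref{thm:Thm3Vi09} (case $p=1$, weak convergence) yields $\diffeo_k \to \diffeo$ and $\diffeo_k^{-1} \to \diffeo^{-1}$ uniformly on every compact subset of $\domain$.

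For the geometric group action \cref{eq:Geometric}, $\DeforOpG(\diffeo_k,\template) = \template \circ \diffeo_k^{-1}$. Given $\varepsilon>0$ pick $\template_\varepsilon \in \Smooth_c^{\infty}(\domain,\Real)$ with $\Vert \template - \template_\varepsilon \Vert_{\LpSpace^2} < \varepsilon$ and split
\[
  \Vert \template\circ\diffeo_k^{-1} - \template\circ\diffeo^{-1} \Vert_{\LpSpace^2}
  \le \Vert (\template-\template_\varepsilon)\circ\diffeo_k^{-1} \Vert_{\LpSpace^2}
    + \Vert \template_\varepsilon\circ\diffeo_k^{-1} - \template_\varepsilon\circ\diffeo^{-1} \Vert_{\LpSpace^2}
    + \Vert (\template_\varepsilon-\template)\circ\diffeo^{-1} \Vert_{\LpSpace^2}.
\]
The two outer terms are $\le C^{1/2}\varepsilon$ for all $k$ by the change--of--variables bound; the middle term tends to $0$ because $\template_\varepsilon$ is uniformly continuous while $\diffeo_k^{-1}\to\diffeo^{-1}$ uniformly on $\overline\domain$, so $\template_\varepsilon\circ\diffeo_k^{-1}\to\template_\varepsilon\circ\diffeo^{-1}$ uniformly, hence in $\LpSpace^2(\domain)$ ($\domain$ bounded). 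Letting $k\to\infty$ and then $\varepsilon\to0$ gives \cref{eq:ConvDeforTemplateG}; the same argument also yields $\template\circ\diffeo_k^{-1}\to\template\circ\diffeo^{-1}$ in $\LpSpace^1(\domain)$.

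For the mass--preserving group action \cref{eq:MassPreserving}, set $u_k := \DeforOpG(\diffeo_k,\template)$ and $u := \DeforOpG(\diffeo,\template)$, and write
\[
  u_k - u = \vert\Diff(\diffeo_k^{-1})\vert\,\bigl(\template\circ\diffeo_k^{-1} - \template\circ\diffeo^{-1}\bigr)
          + \bigl(\vert\Diff(\diffeo_k^{-1})\vert - \vert\Diff(\diffeo^{-1})\vert\bigr)\,(\template\circ\diffeo^{-1}).
\]
The first summand has $\LpSpace^2$--norm $\le C_1\Vert \template\circ\diffeo_k^{-1} - \template\circ\diffeo^{-1}\Vert_{\LpSpace^2}\to0$ by the geometric case. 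For the second it suffices that $\vert\Diff(\diffeo_k^{-1})\vert\to\vert\Diff(\diffeo^{-1})\vert$ uniformly on compact subsets of $\domain$: splitting $\domain$ into a large compact $K$ and a thin remainder $\domain\setminus K$, the contribution on $K$ is controlled by this uniform convergence times $\Vert\template\circ\diffeo^{-1}\Vert_{\LpSpace^2(\domain)}<\infty$, and that on $\domain\setminus K$ by $2C\Vert\template\circ\diffeo^{-1}\Vert_{\LpSpace^2(\domain\setminus K)}$, which is uniformly small because $\vert\diffeo^{-1}(\domain\setminus K)\vert$ shrinks with $\vert\domain\setminus K\vert$ and $y\mapsto\int\vert\template(y)\vert^2\dint y$ is absolutely continuous. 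To obtain the uniform convergence of Jacobian determinants one can invoke \cref{thm:Thm3Vi09} once more --- under weak convergence it controls $\Diff(\diffeo_k^{\pm1})$ as soon as $\LieAlgebra\subset\Smooth^{2}_0(\domain,\Real^n)$, and then the determinant, being polynomial in the entries, converges uniformly on compacta --- or, at the borderline regularity $\LieAlgebra\subset\Smooth^1_0$, use the identity $\vert\Diff(\gelement{0,1}{\velocityfield_k})(x)\vert = \exp\bigl(\int_0^1\Div\velocityfield_k(t,\gelement{0,t}{\velocityfield_k}(x))\dint t\bigr)$ together with the \ac{RKHS} structure of $\LieAlgebra$: the bounded functional $\vfield\mapsto\Div\vfield(z)$ on $\LieAlgebra$ has a Riesz representer $\kappa_z\in\LieAlgebra$, so the exponent equals $\bigl\langle\velocityfield_k,\ t\mapsto\kappa_{\gelement{0,t}{\velocityfield_k}(x)}\bigr\rangle_{\FlowSpace{2}}$, and pairing the weakly convergent $\velocityfield_k$ against this integrand --- which converges strongly in $\FlowSpace{2}$, uniformly for $x$ in a compact set, provided $z\mapsto\kappa_z$ is $\LieAlgebra$--continuous (true whenever the reproducing kernel is twice continuously differentiable, in particular for the diagonal Gaussian kernel of \cref{sec:2DCT}) --- yields convergence of the exponents, hence of the determinants, uniformly on compacta.

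The step I expect to be the main obstacle is precisely this control of the Jacobian factor in the mass--preserving case: weak $\FlowSpace{2}$--convergence of the velocity fields does not, at first--order spatial regularity, control the derivatives $\Diff(\gelement{0,1}{\velocityfield_k})$, so the Jacobian determinant entering \cref{eq:MassPreserving} does not converge automatically from \cref{thm:Thm3Vi09}, and one must either assume slightly more spatial regularity on $\LieAlgebra$ or exploit the \ac{RKHS} representation of the divergence functional as above. Everything else --- the reduction to smooth templates, the change--of--variables estimates, and the treatment of a neighbourhood of $\partial\domain$ --- is routine once the uniform convergence of $\diffeo_k$, $\diffeo_k^{-1}$ and the uniform Jacobian bounds are in hand.
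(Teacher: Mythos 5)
Your proof is correct and, at the level of overall strategy, matches the paper's: handle the geometric action first, then reduce the mass-preserving case to it plus uniform convergence of the Jacobian determinants. The difference is that the paper's proof is essentially two citations --- the geometric case is delegated wholesale to \cite[Theorem~20 on p.~59]{Vi09} (your density-plus-change-of-variables argument is a self-contained reconstruction of that result), and the mass-preserving case is disposed of in one sentence by claiming that \cref{thm:Thm3Vi09} gives uniform convergence of $\Diff\bigl((\gelement{0,1}{\velocityfield_k})^{-1}\bigr)$ on compacta under weak convergence. Your caveat about that last step is well taken: with $\LieAlgebra \subset \Smooth^1_0(\domain,\Real^n)$, i.e.\ $p=1$, the weak-convergence half of \cref{thm:Thm3Vi09} only controls $\Diff^j$ for $j \leq p-1 = 0$, so it yields uniform convergence of the diffeomorphisms but \emph{not} of their derivatives, and the paper's invocation of it for the Jacobian matrices is not covered by the theorem as stated. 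Your two repairs are both legitimate: either strengthen the hypothesis to $\LieAlgebra \subset \Smooth^2_0(\domain,\Real^n)$ so that $j\le 1$ is available, or stay at first-order regularity and use the Liouville identity $\bigl\vert\Diff(\gelement{0,1}{\velocityfield_k})(x)\bigr\vert = \exp\bigl(\int_0^1 \Div\velocityfield_k(t,\gelement{0,t}{\velocityfield_k}(x))\dint t\bigr)$ together with the Riesz representer of the bounded functional $\vfield \mapsto \Div\vfield(z)$ on the \ac{RKHS} $\LieAlgebra$, pairing the weakly convergent $\velocityfield_k$ against integrands that converge strongly (uniformly in $x$ over compacta, since weak convergence is uniform over norm-compact families of test elements). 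The second route does require the mild extra kernel regularity you state, but it is satisfied by the Gaussian kernel actually used in \cref{sec:2DCT}. In short: your argument is sound, more complete than the paper's, and the ``main obstacle'' you flag is a genuine gap in the paper's own proof of the mass-preserving case rather than a defect of your approach.
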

\begin{proof}
The claim \cref{eq:ConvDeforTemplateG} for the case when $\DeforOpG$ is given by \cref{eq:Geometric} follows directly from \cite[Theorem~20 on p.~59]{Vi09}. Consider now the case when $\DeforOpG$ is given by \cref{eq:MassPreserving}. From \cref{thm:Thm3Vi09} we know that $\velocityfield_k  \rightharpoonup \velocityfield$ in $\FlowSpace{2}$ implies that $\Diff\bigl( (\gelement{0,1}{\velocityfield_k})^{-1}\bigr)$ converges uniformly to $\Diff\bigl( (\gelement{0,1}{\velocityfield})^{-1}\bigr)$ on compact subsets of $\domain$. Combined with previous argument proves the claim in \cref{eq:ConvDeforTemplateG} also when $\DeforOpG$ is given by \cref{eq:MassPreserving}. This concludes the proof.
\end{proof}
\begin{remark}
The above theorem handles the case of a non-differentiable template. If the template $\template$ is differentiable, then one does not have to introduce $\SBV$-functions and the corresponding result follows from the more widely know result \cite[Theorem~8.11]{Yo10}.
\end{remark}
It is also possible to state \cref{thm:ConvDeforTemplateG} in terms of the deformation operator given in \cref{eq:DeforOpX}. The corresponding claim, which is given in \cref{cor:ConvDeforTemplateV}, follows directly from \cref{thm:Thm3Vi09}, which states that weak convergence of velocity fields in $\FlowSpace{2}$ implies uniform convergence of corresponding diffeomorphisms on compact subsets. 
\begin{corollary}\label{cor:ConvDeforTemplateV}
Consider the deformation operator $\DeforOpX \colon \FlowSpace{2} \times \RecSpace \to \RecSpace$ defined by \cref{eq:DeforOpX} and assume the assumptions in \cref{thm:ConvDeforTemplateG} hold. Then $\DeforOpX(\velocityfield_k , \template) \to \DeforOpX(\velocityfield, \template)$ in $\LpSpace^2(\domain,\Real)$. 
\end{corollary}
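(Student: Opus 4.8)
The plan is very short, because the statement is essentially a re-labelling of \cref{thm:ConvDeforTemplateG}. First I would unwind the definition \cref{eq:DeforOpX}: for every $\velocityfield \in \FlowSpace{2}$ one has $\DeforOpX(\velocityfield,\template) = \DeforOpG(\gelement{0,1}{\velocityfield},\template)$, where $\gelement{0,1}{\velocityfield} \in \DiffeoGroup$ is the time-one map of the flow solving \cref{eq:FlowEq}; this is well defined by \cref{thm:FlowExistence} since $\FlowSpace{2} \subset \FlowSpace{1}$ (the time interval $[0,1]$ has finite measure). Hence the asserted convergence $\DeforOpX(\velocityfield_k,\template) \to \DeforOpX(\velocityfield,\template)$ in $\LpSpace^2(\domain,\Real)$ is \emph{verbatim} the conclusion
\[
  \DeforOpG(\gelement{0,1}{\velocityfield_k},\template) \to \DeforOpG(\gelement{0,1}{\velocityfield},\template)
  \quad\text{in }\LpSpace^2(\domain,\Real)
\]
of \cref{thm:ConvDeforTemplateG}, whose hypotheses (bounded $\domain$, admissible Hilbert space $\LieAlgebra \subset \Smooth^1_0(\domain,\Real^n)$, $\RecSpace = \SBV(\domain,\Real) \cap \LpSpace^{\infty}(\domain,\Real)$, fixed $\template \in \RecSpace$, and $\velocityfield_k \rightharpoonup \velocityfield$ in $\FlowSpace{2}$) are exactly what is assumed here. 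So I would simply cite \cref{thm:ConvDeforTemplateG} and be done.

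If instead one wanted a proof that does not route through \cref{thm:ConvDeforTemplateG}, I would use the $p=1$ case of \cref{thm:Thm3Vi09} to turn $\velocityfield_k \rightharpoonup \velocityfield$ in $\FlowSpace{2}$ into uniform-on-compacts convergence $\gelement{0,1}{\velocityfield_k} \to \gelement{0,1}{\velocityfield}$ (and, for the mass-preserving action, also $\Diff((\gelement{0,1}{\velocityfield_k})^{-1}) \to \Diff((\gelement{0,1}{\velocityfield})^{-1})$ uniformly on compacts), and then pass from this to $\LpSpace^2(\domain,\Real)$-convergence of the deformed templates $\template \circ (\gelement{0,1}{\velocityfield_k})^{-1}$ (resp.\ $\vert\Diff((\gelement{0,1}{\velocityfield_k})^{-1})\vert\,(\template \circ (\gelement{0,1}{\velocityfield_k})^{-1})$).

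The only genuinely nontrivial point in either route is that last passage --- from uniform convergence of the diffeomorphisms on compact sets to $\LpSpace^2$-convergence of the deformed templates over all of $\domain$ for a merely $\SBV \cap \LpSpace^{\infty}$ template --- and it is precisely what \cref{thm:ConvDeforTemplateG} (via \cite[Theorem~20]{Vi09}) already delivers: one exploits the uniform Lipschitz and Jacobian bounds coming from admissibility together with the boundedness of $\domain$ (so that $\domain$ differs from a compact set only by a thin boundary layer of small measure, on which the contribution is controlled by the $\LpSpace^{\infty}$ bound on $\template$). Since we may invoke \cref{thm:ConvDeforTemplateG}, there is in fact no obstacle here; the substantive work has been done upstream.
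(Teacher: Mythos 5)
Your proposal is correct and matches the paper's treatment: the corollary is obtained by unwinding the definition \cref{eq:DeforOpX}, so that $\DeforOpX(\velocityfield_k,\template)=\DeforOpG(\gelement{0,1}{\velocityfield_k},\template)$, and then citing \cref{thm:ConvDeforTemplateG} (equivalently, \cref{thm:Thm3Vi09} plus the passage to $\LpSpace^2$-convergence, which is the route the paper points to). No gap.
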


\subsection{Reformulation}\label{sec:SimpReg}
With the a priori assumption in \cref{eq:ExactShapeAss}, the inverse problem in \eqref{eq:InvProb} can be re-phrased as the indirect image registration problem of recovering the velocity field $\truevelocityfield \in \FlowSpace{2}$ given data $\data \in \DataSpace$ and template $\template \in \RecSpace$ such that 
\begin{equation}\label{eq:InvProbV}
     \data = \ForwardOp\bigl( \DeforOpX(\truevelocityfield,\template) \bigr)+ \noisedata. 
\end{equation}
In the above, $\DeforOpX(\Cdot,\template) \colon \FlowSpace{2} \to \RecSpace$ is given by \cref{eq:DeforOpX} whereas $\ForwardOp \colon \RecSpace \to \DataSpace$ and $\noisedata \in \DataSpace$ come from \cref{eq:InvProb}. The variational scheme for solving \eqref{eq:InvProbV} is given in \cref{eq:OptimV}. Hence, a natural question is whether regularizing properties for \cref{eq:OptimV} carry over to \cref{eq:VarRegExactShape}.

By \cref{eq:Reformulation}, any solution for \cref{eq:OptimG} yields a solution for \cref{eq:VarRegExactShape} whenever $\MatchingFunctionalX \colon \RecSpace \to \Real$ is given as in \cref{eq:MatchingFunctionalX}. Next, by \cite[Lemma~11.3]{Yo10}, \cref{eq:OptimG} is equivalent to \cref{eq:OptimV}. Hence, existence and uniqueness of solutions for \cref{eq:OptimV} implies the same for \cref{eq:VarRegExactShape}. Next, if $\signal \to \DataDisc\bigl( \ForwardOp(\signal), \data \bigr)$ is continuous, then by \cref{cor:ConvDeforTemplateV}, stability and convergence for \cref{eq:OptimV} implies the same for \cref{eq:VarRegExactShape}. Hence, under the priori assumption in \cref{eq:ExactShapeAss}, regulaizing properties of \cref{eq:OptimV} carry over to \cref{eq:VarRegExactShape}. 

Finally, we consider the somewhat simplified case when $\mu=0$ in \cref{eq:VarRegExactShape}, so the corresponding image registration scheme in \cref{eq:OptimV} that will be analyzed is to minimize the objective functional $\ObjectiveV_{\data,\lambda}  \colon \FlowSpace{2} \to \Real$ given by 
\begin{equation}\label{eq:ObjectiveGoalFunctionalV}
 \ObjectiveV_{\data,\gamma}(\velocityfield) := 
   \gamma \Vert \velocityfield \Vert_{\FlowSpace{2}}^2 
   + \DataDisc\Bigl( \ForwardOp\bigl(\DeforOpX(\velocityfield,\template)\bigr), \data \Bigr)
   \quad\text{for $\velocityfield \in \FlowSpace{2}$}  
\end{equation}
with $\DeforOpX \colon \FlowSpace{2} \times \RecSpace \to \RecSpace$ given by \cref{eq:DeforOpX} and $\DataDisc \colon \DataSpace \times \DataSpace \to \Real^+$ is the data discrepancy functional introduced in \cref{eq:LeastSquares}.

\subsection{Existence}\label{sec:Existence}
Existence of solutions is a basic property, since otherwise the reconstruction scheme is of limited usefulness for reconstruction. 
\begin{theorem}[Existence]\label{thm:Existence}
Assume $\domain \subset \Real^n$ is a bounded domain, $\LieAlgebra \subset \Smooth^1_0(\domain,\Real^n)$ is an admissible Hilbert space, $\template \in \SBV(\domain,\Real) \bigcap \LpSpace^{\infty}(\domain,\Real)$, and both $\DataDisc\bigl( \ForwardOp(\Cdot), \data \bigr), \RegFunc(\Cdot) \colon \RecSpace \to \Real$ are lower semi-continuous. Then, $\ObjectiveV_{\data,\lambda}  \colon \FlowSpace{2} \to \Real$ in \cref{eq:ObjectiveGoalFunctionalV} has a minimizer, \ie, the variational problem in \cref{eq:OptimV} has a solution expressible as $\DeforOpX(\velocityfield,\template) \in \RecSpace$ for some $\velocityfield \in \FlowSpace{2}$.
\end{theorem}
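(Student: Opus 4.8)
The plan is to apply the direct method of the calculus of variations to the functional $\ObjectiveV_{\data,\gamma}$ on the Hilbert space $\FlowSpace{2}$, using weak sequential compactness of bounded sets together with the weak lower semi-continuity provided by \cref{thm:ConvDeforTemplateG} (equivalently \cref{cor:ConvDeforTemplateV}). First I would observe that $\ObjectiveV_{\data,\gamma} \geq 0$, so the infimum $m := \inf_{\velocityfield \in \FlowSpace{2}} \ObjectiveV_{\data,\gamma}(\velocityfield)$ is finite and non-negative; it is also attainable as a limit, since $\ObjectiveV_{\data,\gamma}(\velocityfieldzero)$ is finite (the data discrepancy term evaluated at the undeformed template $\template$ is a real number, and $\Vert \velocityfieldzero \Vert_{\FlowSpace{2}} = 0$). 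Take a minimizing sequence $\{ \velocityfield_k \}_k \subset \FlowSpace{2}$ with $\ObjectiveV_{\data,\gamma}(\velocityfield_k) \to m$.

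Next I would extract coercivity from the regularization term. Since the data discrepancy term is non-negative, we have $\gamma \Vert \velocityfield_k \Vert_{\FlowSpace{2}}^2 \leq \ObjectiveV_{\data,\gamma}(\velocityfield_k)$, and the right-hand side is bounded (being a convergent sequence). Hence, provided $\gamma > 0$, the sequence $\{ \velocityfield_k \}_k$ is bounded in $\FlowSpace{2}$. Since $\FlowSpace{2}$ is a Hilbert space (as noted after \cref{eq:FlowSpace}, when $\LieAlgebra$ is a Hilbert space), bounded sequences have weakly convergent subsequences; so, passing to a subsequence (not relabelled), there is $\velocityfield \in \FlowSpace{2}$ with $\velocityfield_k \rightharpoonup \velocityfield$ in $\FlowSpace{2}$. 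The case $\gamma = 0$ requires a separate remark: then $\ObjectiveV_{\data,\gamma}$ reduces to the pure data-discrepancy term composed with $\DeforOpX$, and without coercivity one cannot in general guarantee existence; but since the theorem is stated for the scheme with a regularization term present, I would simply note that $\gamma > 0$ is the relevant regime (or that the statement is understood with $\gamma > 0$, consistent with the role of $\gamma$ as a regularization parameter).

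Then I would pass to the limit in each term. For the regularization term, the norm $\velocityfield \mapsto \Vert \velocityfield \Vert_{\FlowSpace{2}}^2$ is convex and strongly continuous on the Hilbert space $\FlowSpace{2}$, hence weakly lower semi-continuous, so $\Vert \velocityfield \Vert_{\FlowSpace{2}}^2 \leq \liminf_k \Vert \velocityfield_k \Vert_{\FlowSpace{2}}^2$. For the data fidelity term, \cref{cor:ConvDeforTemplateV} gives $\DeforOpX(\velocityfield_k, \template) \to \DeforOpX(\velocityfield, \template)$ strongly in $\LpSpace^2(\domain,\Real)$; combined with lower semi-continuity of $\signal \mapsto \DataDisc\bigl( \ForwardOp(\signal), \data \bigr)$ on $\RecSpace$ (an assumption of the theorem, with respect to the $\LpSpace^2$-topology on $\RecSpace$), this yields $\DataDisc\bigl( \ForwardOp(\DeforOpX(\velocityfield,\template)), \data \bigr) \leq \liminf_k \DataDisc\bigl( \ForwardOp(\DeforOpX(\velocityfield_k,\template)), \data \bigr)$. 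Adding the two inequalities and using $\liminf(a_k) + \liminf(b_k) \leq \liminf(a_k + b_k)$ gives $\ObjectiveV_{\data,\gamma}(\velocityfield) \leq \liminf_k \ObjectiveV_{\data,\gamma}(\velocityfield_k) = m$, so $\velocityfield$ is a minimizer. The corresponding minimizer of \cref{eq:OptimV} in $\RecSpace$ is $\DeforOpX(\velocityfield, \template)$, which by \cref{eq:Reformulation} solves \cref{eq:VarRegExactShape}.

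The main obstacle, and the point where the earlier theory does the real work, is the weak-to-strong continuity of $\velocityfield \mapsto \DeforOpX(\velocityfield,\template)$ from $\FlowSpace{2}$ (weak) to $\LpSpace^2(\domain,\Real)$ (strong): without it, one would be stuck with only weak convergence of the deformed templates, and the composition with $\ForwardOp$ and $\DataDisc$ would not behave well (in particular the nonlinear group actions \cref{eq:Geometric}–\cref{eq:MassPreserving} are not weakly continuous in any elementary sense). This gap is precisely closed by \cref{thm:ConvDeforTemplateG}/\cref{cor:ConvDeforTemplateV}, whose proof in turn relies on the uniform-convergence-on-compacts result \cref{thm:Thm3Vi09} for the flow map together with the structure of $\SBV \cap \LpSpace^\infty$; everything else in the argument is the standard direct-method routine. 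A minor technical point to be careful about is that $\RegFunc$ is assumed lower semi-continuous in the theorem statement even though the analyzed functional \cref{eq:ObjectiveGoalFunctionalV} has $\mu = 0$; I would simply note that this hypothesis is vacuous here (or included for the general $\mu > 0$ case discussed in \cref{sec:SimpReg}), so it plays no role in the proof.
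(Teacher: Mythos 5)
Your proposal is correct and follows essentially the same route as the paper's proof: the direct method on the Hilbert space $\FlowSpace{2}$, with weak compactness of the (bounded) minimizing sequence and weak lower semi-continuity of the objective obtained by combining the weak-to-strong continuity of $\velocityfield \mapsto \DeforOpX(\velocityfield,\template)$ from \cref{cor:ConvDeforTemplateV} with the assumed lower semi-continuity of the data discrepancy. If anything, you are slightly more explicit than the paper about why the minimizing sequence is bounded (coercivity of the $\gamma\Vert\Cdot\Vert_{\FlowSpace{2}}^2$ term, requiring $\gamma>0$) and about the vacuity of the $\RegFunc$ hypothesis when $\mu=0$.
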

\begin{proof}   
First, $\LieAlgebra$ is separable so $\FlowSpace{2}$ is separable, which in turn implies that bounded balls are weakly compact. Next, let $\{ \velocityfield_k \}_k \subset \FlowSpace{2}$ be a minimizing sequence. Then, there exists a subsequence that weakly converges to an element $\velocityfield \in \FlowSpace{2}$, \ie, $\velocityfield_k  \rightharpoonup \velocityfield$ in $\FlowSpace{2}$. Now, if the objective functional in \cref{eq:ObjectiveGoalFunctionalV} is lower semi-continuous in $\FlowSpace{2}$ \wrt the weak topology and if, then $\velocityfield$ is also minimizer. This proves existence of solutions for \cref{eq:OptimV}. 

From the above, existence of solutions for \cref{eq:OptimV} holds whenever the functional in \cref{eq:ObjectiveGoalFunctionalV} is lower semi-continuous on $\FlowSpace{2}$ \wrt the weak topology. This requires making use of the admissibility of $\LieAlgebra$ and the assumption that $\template \in \SBV(\domain,\Real) \bigcap \LpSpace^{\infty}(\domain,\Real)$. By \cref{thm:ConvDeforTemplateG}
\[ \velocityfield_k  \rightharpoonup \velocityfield \text{ in $\FlowSpace{2}$} 
   \quad\Longrightarrow\quad
   \template \circ \gelement{1,0}{\velocityfield_k} \to \template \circ \gelement{1,0}{\velocityfield} \text{ in $\LpSpace^2(\domain,\Real)$.}  
\]
Furthermore, $\gelement{1,0}{\velocityfield_k}$ and $\Diff(\gelement{1,0}{\velocityfield_k})$ converge uniformly on $\domain$ to $\gelement{1,0}{\velocityfield}$ and $\Diff(\gelement{1,0}{\velocityfield})$, respectively. In particular, $\velocityfield \mapsto \DeforOpX(\velocityfield,\template)$ is continuous in the weak topology on $\FlowSpace{2}$ for the group actions listed in \cref{sec:GroupAction}. Next, $\signal \mapsto \DataDisc\bigl( \ForwardOp(\signal), \data \bigr)$ is by assumption lower semi-continuous on $\RecSpace$, so $\velocityfield \mapsto \MatchingFunctionalX  \circ \DeforOpX(\velocityfield,\template)$ is lower semi-continuous in the weak topology of $\FlowSpace{2}$, \ie, the objective functional in \cref{eq:OptimV} lower semi-continuous.
This concludes the proof.
\end{proof}  

\subsection{Uniqueness}\label{sec:Unique}
Another desirable property is uniqueness, which in this context means \cref{eq:OptimV} has a unique solution for given data. Unfortunately, $\velocityfield \mapsto \DeforOpX(\velocityfield,\template)$ is not necessarily convex, so the objective functional in \cref{eq:OptimV} is not necessarily convex even when both $\signal \to \RegFunc(\signal)$ and $\signal \to \DataDisc\bigl( \ForwardOp(\signal), \data \bigr)$ are strictly convex. Hence, \cref{eq:OptimV} may not have a unique solution. 

As with all reconstruction methods that involve solving non-convex optimization problems, there is always the issue of getting stuck in local extrema. One option to address this is to further restrict the set $\LieAlgebra$ of velocity fields, but it is highly non-trivial to work out conditions on $\LieAlgebra$ that would guarantee uniqueness. 

\subsection{Stability and convergence}\label{sec:StabConv}
Stability essentially means that reconstructions are continuous \wrt data and convergence refers to the case when data in \cref{eq:InvProb} tends to ideal data, \ie, as $\data \to \ForwardOp(\truesignal)$ in $\DataSpace$. Both are essential parts of a regularization scheme and this section proves stability and convergence results. Preferably such results are complemented with convergence rates and stability estimates, but they are left for the future.

The starting point is to show that minimizing $\ObjectiveV_{\data,\gamma}  \colon \FlowSpace{2} \to \Real$ in \cref{eq:ObjectiveGoalFunctionalV} defines a reconstruction scheme for \cref{eq:InvProbV} that is stable in the weak sense. The precise formulation reads as follows:
\begin{theorem}[Stability]\label{thm:Stability}
In addition to the assumptions in \cref{thm:Existence}, let $\data \in \DataSpace$ be fixed, $\RecSpace =  \LpSpace^2(\domain,\Real)$, the forward operator $\ForwardOp \colon \RecSpace \to \DataSpace$ is continuous, and the data discrepancy $\DataDisc\bigl( \ForwardOp(\Cdot), \data \bigr) \colon \RecSpace \to \Real^+$ is  continuous. Finally, assume furthermore that $\{ \data_k \}_k \subset \DataSpace$ where $\data_k \to \data$. 
If $\velocityfield_k \in \FlowSpace{2}$ as a minimizer of $\velocityfield \mapsto \ObjectiveV_{\data_k,\gamma}(\velocityfield)$ for some $k$ and $\gamma\geq 0$, then there exists a sub-sequence of $\{ \velocityfield_k \}_k$ that converges weakly towards a minimizer of $\ObjectiveV_{\data,\gamma}$ in \cref{eq:ObjectiveGoalFunctionalV}.
\end{theorem}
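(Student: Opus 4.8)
The plan is to run the standard variational-regularization argument (cf.\ \cite[Chapter~3]{ScGrGrHaLe09}): produce a bounded minimizing sequence in the Hilbert space $\FlowSpace{2}$, extract a weakly convergent subsequence, and then combine weak lower semicontinuity of the norm with the \emph{continuity of the deformation operator under weak convergence} supplied by \cref{cor:ConvDeforTemplateV}. The $\gamma$-term plays the role of the regularizer and the data term the role of the (perturbed) fidelity, exactly as in classical Tikhonov-type stability proofs.

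First I would establish boundedness of $\{\velocityfield_k\}_k$ in $\FlowSpace{2}$. Since the zero velocity field generates the identity flow, $\DeforOpX(\velocityfieldzero,\template)=\template$, and minimality of $\velocityfield_k$ for $\ObjectiveV_{\data_k,\gamma}$ gives $\gamma\Vert\velocityfield_k\Vert_{\FlowSpace{2}}^2 \le \ObjectiveV_{\data_k,\gamma}(\velocityfield_k) \le \ObjectiveV_{\data_k,\gamma}(\velocityfieldzero) = \DataDisc\bigl(\ForwardOp(\template),\data_k\bigr)$. Because $\data_k\to\data$ and $\DataDisc$ is continuous in its data argument, the right-hand side is bounded uniformly in $k$, so for $\gamma>0$ the sequence $\{\velocityfield_k\}_k$ is bounded. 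As in the proof of \cref{thm:Existence}, $\FlowSpace{2}$ is a separable Hilbert space, hence bounded sets are weakly sequentially precompact and there is a subsequence with $\velocityfield_{k_j}\rightharpoonup\hat\velocityfield$ in $\FlowSpace{2}$ for some $\hat\velocityfield\in\FlowSpace{2}$.

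Next I would verify that $\hat\velocityfield$ minimizes $\ObjectiveV_{\data,\gamma}$. Weak lower semicontinuity of the Hilbert norm gives $\Vert\hat\velocityfield\Vert_{\FlowSpace{2}}^2 \le \liminf_j \Vert\velocityfield_{k_j}\Vert_{\FlowSpace{2}}^2$. By \cref{cor:ConvDeforTemplateV}, $\velocityfield_{k_j}\rightharpoonup\hat\velocityfield$ in $\FlowSpace{2}$ implies $\DeforOpX(\velocityfield_{k_j},\template)\to\DeforOpX(\hat\velocityfield,\template)$ in $\LpSpace^2(\domain,\Real)$; composing with the continuous forward operator $\ForwardOp$ and with $\DataDisc$ (continuous jointly, using also $\data_{k_j}\to\data$) yields $\DataDisc\bigl(\ForwardOp(\DeforOpX(\velocityfield_{k_j},\template)),\data_{k_j}\bigr)\to\DataDisc\bigl(\ForwardOp(\DeforOpX(\hat\velocityfield,\template)),\data\bigr)$. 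Adding the two contributions and using $\liminf_j(a_j+b_j)\ge\liminf_j a_j + \lim_j b_j$ gives $\ObjectiveV_{\data,\gamma}(\hat\velocityfield)\le\liminf_j\ObjectiveV_{\data_{k_j},\gamma}(\velocityfield_{k_j})$. On the other hand, for an arbitrary competitor $\velocityfield\in\FlowSpace{2}$, minimality of $\velocityfield_{k_j}$ gives $\ObjectiveV_{\data_{k_j},\gamma}(\velocityfield_{k_j})\le\ObjectiveV_{\data_{k_j},\gamma}(\velocityfield)$, and the right-hand side converges to $\ObjectiveV_{\data,\gamma}(\velocityfield)$ by continuity of $\DataDisc$ in the data argument; hence $\limsup_j\ObjectiveV_{\data_{k_j},\gamma}(\velocityfield_{k_j})\le\ObjectiveV_{\data,\gamma}(\velocityfield)$. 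Chaining the two inequalities, $\ObjectiveV_{\data,\gamma}(\hat\velocityfield)\le\ObjectiveV_{\data,\gamma}(\velocityfield)$ for every $\velocityfield\in\FlowSpace{2}$, so $\hat\velocityfield$ is a minimizer of $\ObjectiveV_{\data,\gamma}$, which is the claim.

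The main obstacle is the implication $\velocityfield_{k_j}\rightharpoonup\hat\velocityfield \Rightarrow \DeforOpX(\velocityfield_{k_j},\template)\to\DeforOpX(\hat\velocityfield,\template)$ in $\LpSpace^2$: this is precisely where admissibility of $\LieAlgebra$ and the $\SBV\cap\LpSpace^\infty$ regularity of $\template$ enter, and it is encapsulated in \cref{thm:ConvDeforTemplateG}/\cref{cor:ConvDeforTemplateV} (resting on \cite[Theorems~3 and~20]{Vi09} for the two group actions). A secondary technical point is that the stated hypotheses only guarantee continuity of $\DataDisc(\ForwardOp(\Cdot),\data)$ in its first slot, whereas the argument above also uses continuity of $\DataDisc$ with respect to the data argument (to handle $\data_k\to\data$ simultaneously with the convergence of the deformed templates), so one should either strengthen the hypothesis to joint continuity of $\DataDisc$ or read the statement in the case of a fixed perturbation model; likewise, the coercivity used in the boundedness step requires $\gamma>0$.
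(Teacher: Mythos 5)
Your proof is correct and follows essentially the same route as the paper's: comparison with the zero velocity field to obtain boundedness, weak sequential compactness in the separable Hilbert space $\FlowSpace{2}$, \cref{cor:ConvDeforTemplateV} together with continuity of $\ForwardOp$ and $\DataDisc$ to pass to the limit in the data term, and weak lower semicontinuity of the norm combined with minimality of $\velocityfield_k$ to conclude. Your two caveats---that the coercivity step really requires $\gamma>0$ and that continuity of $\DataDisc$ in its second (data) argument is also needed---are well observed, since the paper's own proof tacitly uses both.
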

\begin{proof}   
First, since the data discrepancy is positive and $\velocityfield_k \in \FlowSpace{2}$ as a minimizer of $\velocityfield \mapsto \ObjectiveV_{\data_k,\gamma}(\velocityfield)$ in \cref{eq:ObjectiveGoalFunctionalV}, one has 
\[  
    \Vert \velocityfield_k \Vert_{\FlowSpace{2}}^2
    \leq \frac{1}{\gamma}\ObjectiveV_{\data_k,\gamma}(\velocityfield_k) 
    \leq \frac{1}{\gamma}\ObjectiveV_{\data_k,\gamma}(\velocityfieldzero) 
    \quad\text{for each $k$ and $\gamma\geq 0$.}
\]
In the above, $\velocityfieldzero \in \FlowSpace{2}$ denotes the zero-velocity field. This velocity field has zero $\FlowSpace{2}$-norm. Next, both 
the forward operator and the data discrepancy are continuous, so 
\[  \ObjectiveV_{\data_k,\gamma}(\velocityfieldzero) = 
      \DataDisc\Bigl( \ForwardOp\bigl( \DeforOpX(\velocityfieldzero,\template) \bigr), \data_k \Bigr)
      \to 
      \DataDisc\Bigl( \ForwardOp\bigl( \DeforOpX(\velocityfieldzero,\template) \bigr), \data \Bigr)       
      \quad\text{as $\data_k \to \data$ in $\DataSpace$.}
\]       
Hence, $\ObjectiveV_{\data_k,\gamma}(\velocityfieldzero)$ is a bounded sequence and therefore $\{ \velocityfield_k \}_k \subset \FlowSpace{2}$ is bounded. Hence, it has a subsequence that converges weakly to some $\widehat{\velocityfield} \in \FlowSpace{2}$. The continuity of the forward operator along with \Cref{cor:ConvDeforTemplateV} implies
\[ \ForwardOp\bigl( \DeforOpX(\velocityfield_k,\template) \bigr) \to 
   \ForwardOp\bigl( \DeforOpX(\widehat{\velocityfield},\template) \bigr)  
   \quad\text{in $\DataSpace$ as $\velocityfield_k \rightharpoonup \widehat{\velocityfield}$ in $\FlowSpace{2}$.}
\]
Since the data discrepancy is continuous, 
\[  
   \DataDisc\Bigl( \ForwardOp\bigl( \DeforOpX(\velocityfield_k,\template) \bigr), \data_k \Bigr)
    \to 
   \DataDisc\Bigl( \ForwardOp\bigl( \DeforOpX(\widehat{\velocityfield},\template) \bigr), \data \Bigr)
    \quad\text{as $\data_k \to \data$ in $\DataSpace$.}
\]    
Finally, $\Vert \widehat{\velocityfield} \Vert_{\FlowSpace{2}} \leq \liminf_{k} \Vert \velocityfield_k \Vert_{\FlowSpace{2}}$ and $\velocityfield_k$ is a minimizer of $\ObjectiveV_{\data_k,\gamma}$, so 
\[   \ObjectiveV_{\data,\gamma}(\widehat{\velocityfield}) 
     \leq \liminf_k  \ObjectiveV_{\data_k,\gamma}(\velocityfield_k) \leq
     \liminf_k  \ObjectiveV_{\data_k,\gamma}(\velocityfield)
     \quad\text{for any $\velocityfield \in \FlowSpace{2}$.}
\]
Hence, $\ObjectiveV_{\data,\gamma}(\widehat{\velocityfield}) \leq \ObjectiveV_{\data,\gamma}(\velocityfield)$ as $\data_k \to \data$, but this holds for any $\velocityfield \in \FlowSpace{2}$, so $\widehat{\velocityfield}$ is a minimizer of $\ObjectiveV_{\data,\gamma}$. 

In conclusion, the reconstructed velocity fields depend continuously on data in the weak sense, which in turn concludes the proof.
\end{proof}
\begin{remark}
A natural open question is to study whether the same result holds in the strong sense, \ie, whether there exists a sub-sequence of $\{ \velocityfield_k \}_k$ that converges strongly to a minimizer of $\ObjectiveV_{\data,\gamma}$.
\end{remark}

Next is to consider convergence, again in the weak sense.
\begin{theorem}[Convergence]\label{thm:Convergence}
Assume the conditions in \cref{thm:Stability} holds and there exists some $\truevelocityfield \in \FlowSpace{2}$ such that 
$\ForwardOp\bigl(\DeforOpX(\truevelocityfield,\template)\bigr) = \data$. Furthermore, let $\{ \data_k \}_k \subset \DataSpace$ be sequence of data and $\delta_k$ is a sequence of positive real numbers converging to $0$ such that 
\[ \DataDisc( \data_k, \data) \leq \delta_k^2 \quad\text{for each $k$.} \]
Finally, assume that there is a parameter choice rule $\gamma \colon \Real \to \Real$ such that 
\begin{equation}\label{eq:ParamSelectionRule}
 \gamma(\delta) \to 0 
   \quad\text{and}\quad
   \dfrac{\delta^2}{\gamma(\delta)} \to 0 
   \quad\text{when $\delta \to 0$,}
\end{equation}
and let  $\velocityfield_k \in \FlowSpace{2}$ be a minimizer of $\ObjectiveV_{\data_k,\gamma(\delta_k)}  \colon \FlowSpace{2} \to \Real$ given by \cref{eq:ObjectiveGoalFunctionalV}. Then, there exists a sub-sequence of $\{ \velocityfield_k \}_k$ that converges weakly towards an element in the pre-image of $\data$ under the mapping $\ForwardOp\bigl(\DeforOpX(\Cdot,\template)\bigr)$.
\end{theorem}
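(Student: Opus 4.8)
This is the standard convergence theorem for variational regularization, so the plan is to run the classical three‑step argument: a uniform a priori bound on the regularized solutions, extraction of a weakly convergent subsequence, and identification of the weak limit as a solution of the noise‑free problem. Here \cref{cor:ConvDeforTemplateV} supplies the only non‑elementary ingredient; existence of each minimizer $\velocityfield_k$ is already provided by \cref{thm:Existence}, and by \cref{eq:Reformulation} any statement about \cref{eq:OptimV} transfers to \cref{eq:VarRegExactShape}.

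First I would derive the a priori bound. Since $\velocityfield_k$ minimizes $\ObjectiveV_{\data_k,\gamma(\delta_k)}$ from \cref{eq:ObjectiveGoalFunctionalV} and $\truevelocityfield$ is an admissible competitor with $\ForwardOp\bigl(\DeforOpX(\truevelocityfield,\template)\bigr)=\data$, comparing objective values and using $\DataDisc\bigl(\ForwardOp(\DeforOpX(\truevelocityfield,\template)),\data_k\bigr)=\DataDisc(\data,\data_k)\le\delta_k^2$ gives
\[
  \gamma(\delta_k)\,\Vert\velocityfield_k\Vert_{\FlowSpace{2}}^2
  + \DataDisc\Bigl(\ForwardOp\bigl(\DeforOpX(\velocityfield_k,\template)\bigr),\data_k\Bigr)
  \;\le\; \gamma(\delta_k)\,\Vert\truevelocityfield\Vert_{\FlowSpace{2}}^2 + \delta_k^2 .
\]
Discarding the non‑negative discrepancy term and dividing by $\gamma(\delta_k)>0$ yields $\Vert\velocityfield_k\Vert_{\FlowSpace{2}}^2\le\Vert\truevelocityfield\Vert_{\FlowSpace{2}}^2+\delta_k^2/\gamma(\delta_k)$, a bounded sequence by \cref{eq:ParamSelectionRule}; discarding instead the norm term gives $\DataDisc\bigl(\ForwardOp(\DeforOpX(\velocityfield_k,\template)),\data_k\bigr)\le\gamma(\delta_k)\Vert\truevelocityfield\Vert_{\FlowSpace{2}}^2+\delta_k^2\to0$.

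Next I would pass to the limit. As in the proof of \cref{thm:Existence}, $\FlowSpace{2}$ is a separable Hilbert space, so the bounded sequence $\{\velocityfield_k\}_k$ has a subsequence $\velocityfield_{k_j}\rightharpoonup\widehat{\velocityfield}$ in $\FlowSpace{2}$. By \cref{cor:ConvDeforTemplateV}, $\DeforOpX(\velocityfield_{k_j},\template)\to\DeforOpX(\widehat{\velocityfield},\template)$ in $\LpSpace^2(\domain,\Real)$, and continuity of $\ForwardOp$ then gives $\ForwardOp\bigl(\DeforOpX(\velocityfield_{k_j},\template)\bigr)\to\ForwardOp\bigl(\DeforOpX(\widehat{\velocityfield},\template)\bigr)$ in $\DataSpace$. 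Since $\data_{k_j}\to\data$ and $\DataDisc$ is continuous on $\DataSpace\times\DataSpace$ and vanishes only on the diagonal, the convergence $\DataDisc\bigl(\ForwardOp(\DeforOpX(\velocityfield_{k_j},\template)),\data_{k_j}\bigr)\to0$ established above forces $\DataDisc\bigl(\ForwardOp(\DeforOpX(\widehat{\velocityfield},\template)),\data\bigr)=0$, that is, $\ForwardOp\bigl(\DeforOpX(\widehat{\velocityfield},\template)\bigr)=\data$; hence $\widehat{\velocityfield}$ lies in the pre‑image of $\data$, which is the assertion. Running the a priori estimate with an arbitrary pre‑image element in place of $\truevelocityfield$, together with weak lower semicontinuity of $\Vert\Cdot\Vert_{\FlowSpace{2}}$, moreover shows $\widehat{\velocityfield}$ is a minimum‑$\FlowSpace{2}$‑norm solution, though the statement does not require this.

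I expect the main obstacle to be essentially already resolved: the one substantial analytic fact needed — that weak $\FlowSpace{2}$‑convergence of velocity fields produces $\LpSpace^2$‑convergence of the deformed templates — is exactly \cref{cor:ConvDeforTemplateV}, which itself rests on \cref{thm:Thm3Vi09}; what remains is routine. The genuinely delicate aspects are structural rather than computational: compactness in $\FlowSpace{2}$ is available only in the weak topology, so only a weakly (not strongly) convergent subsequence — rather than the whole sequence — can be extracted, and the non‑convexity of $\velocityfield\mapsto\DeforOpX(\velocityfield,\template)$ noted in \cref{sec:Unique} is precisely what blocks a stronger conclusion; furthermore the limit identification uses that $\DataDisc$ is continuous on $\DataSpace\times\DataSpace$ and point‑separating, a mild strengthening of the hypotheses carried over from \cref{thm:Stability}, and the hypothesis $\DataDisc(\data_k,\data)\le\delta_k^2$ must be read so that it also controls $\DataDisc(\data,\data_k)$.
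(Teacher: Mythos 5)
Your proposal is correct and follows essentially the same route as the paper's proof: the a priori bound via comparison with the exact solution $\truevelocityfield$ and the parameter choice rule \cref{eq:ParamSelectionRule}, weak compactness of the bounded sequence in $\FlowSpace{2}$, and identification of the limit by combining \cref{cor:ConvDeforTemplateV}, continuity of $\ForwardOp$ and $\DataDisc$, and the vanishing of the discrepancy along the minimizers. Your explicit remarks that $\DataDisc$ must separate points and that the hypothesis on $\DataDisc(\data_k,\data)$ is used with the arguments swapped make precise two points the paper leaves implicit, but they do not change the argument.
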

\begin{proof}   
By assumption, there is a $\truevelocityfield \in \FlowSpace{2}$ such that $\ForwardOp\bigl(\DeforOpX(\truevelocityfield,\template)\bigr) = \data$. Then, 
\[ \Vert \velocityfield_k \Vert_{\FlowSpace{2}}^2 
   \leq \frac{1}{\gamma_k}  \ObjectiveV_{\data_k,\gamma_k}(\velocityfield_k) 
   \leq \frac{1}{\gamma_k}  \ObjectiveV_{\data_k,\gamma_k}(\truevelocityfield) 
\quad\text{where $\gamma_k := \gamma(\delta_k)$.}
\]
The last inequality above follows from the assumption that $\velocityfield_k$ is a minimizer of $\velocityfield \mapsto \ObjectiveV_{\data_k,\gamma_k}(\velocityfield)$. Moreover, \cref{eq:ParamSelectionRule} gives 
\begin{align*}
  \frac{1}{\gamma_k}  \ObjectiveV_{\data_k,\gamma_k}(\truevelocityfield) &= 
    \frac{1}{\gamma_k}  \Bigl[ \DataDisc(\data,\data_k) + \gamma_k \Vert  \truevelocityfield \Vert_{\FlowSpace{2}}^2 \Bigr] \\
    &\leq \frac{{\delta_k}^2}{\gamma_k} + \Vert  \truevelocityfield \Vert_{\FlowSpace{2}}^2
    \to \Vert  \truevelocityfield \Vert_{\FlowSpace{2}}^2 
    \quad\text{as $\delta_k \to 0$.}
\end{align*}    
Hence, $\{ \velocityfield_k \}_k \subset \FlowSpace{2}$ is bounded, so it has a sub-sequence that converges weakly to $\widetilde{\velocityfield} \in \FlowSpace{2}$. Since both the forward operator and data discrepancy are continuous, so 
\begin{equation}\label{eq:Conv1}
\DataDisc\Bigl( \ForwardOp\bigl(\DeforOpX(\velocityfield_k,\template)\bigr),\data_k \Bigr) \to 
   \DataDisc\Bigl( \ForwardOp\bigl(\DeforOpX(\widetilde{\velocityfield},\template)\bigr),\data \Bigr)
    \quad\text{as $\delta_k \to 0$.}   
\end{equation}
Furthermore, 
\begin{equation}\label{eq:Conv2}
\DataDisc\Bigl( \ForwardOp\bigl(\DeforOpX(\velocityfield_k,\template)\bigr),\data_k \Bigr) \
\leq
\ObjectiveV_{\data_k,\gamma}(\velocityfield_k)
\leq
\ObjectiveV_{\data_k,\gamma}(\truevelocityfield) 
= \DataDisc( \data,\data_k) + \gamma_k \bigl\Vert \truevelocityfield \bigr\Vert_{\FlowSpace{2}}^2 
\quad\text{for any $k$.}
\end{equation}
However, $\DataDisc( \data,\data_k) \to 0$ as $\delta_k \to 0$ and $\gamma_k \to 0$ by \cref{eq:ParamSelectionRule}, so \cref{eq:Conv1} and \cref{eq:Conv2} implies that $\ForwardOp\bigl(\DeforOpX(\widetilde{\velocityfield},\template)\bigr)=\data$, which implies that $\{ \velocityfield_k \}_k \subset \FlowSpace{2}$ has a sub-sequence that converges weakly as data error goes to zero to $\truevelocityfield \in \FlowSpace{2}$ where $\ForwardOp\bigl(\DeforOpX(\truevelocityfield,\template)\bigr) = \data$. This concludes the proof of convergence.
\end{proof}

\section{Derivative and gradient computations}\label{sec:Gradient}
To goal here is to compute the derivative and gradient of the objective functional $\ObjectiveV  \colon \FlowSpace{2} \to \Real$ in \cref{eq:OptimV}, \ie, so
\begin{equation}\label{eq:GenericGoalFunctionalV}
 \ObjectiveV(\velocityfield) := 
   \gamma \Vert \velocityfield \Vert_{\FlowSpace{2}}^2 
   + \MatchingFunctionalX  \circ \DeforOpX(\velocityfield,\template)
   \quad\text{for $\velocityfield \in \FlowSpace{2}$}  
\end{equation}
where $\MatchingFunctionalX  \colon \RecSpace \to \Real$ given by \cref{eq:MatchingFunctionalX} and $\DeforOpX \colon \FlowSpace{2} \times \RecSpace \to \RecSpace$ given by \cref{eq:DeforOpX}. Since $\ObjectiveV$ is defined on the Hilbert space $\FlowSpace{2}$, the gradient associated with its G\^ateaux derivative will be based on the Hilbert structure of $\FlowSpace{2}$, which is the natural inner-product space for the optimization problem \cref{eq:OptimV}. The derivation will assume a differentiable template, but one can in fact work with templates in $\SBV(\domain,\Real) \bigcap \LpSpace^{\infty}(\domain,\Real$ that are non-smooth as shown in \cite[Chapter~2]{Vi09}.

The starting point is to compute the derivative of the deformation operator $\DeforOpX$ (\cref{sec:DeforOpDiff}). Next step is to use this in computing the gradient of the registration functional $\MatchingFunctionalX  \circ \DeforOpX(\Cdot,\template) \colon \FlowSpace{2} \to \Real$. This is done in \cref{sec:RegFunc} for an abstract registration functional $\MatchingFunctionalX$, the case of the $2$-norm is worked out in \cref{sec:L2regFunc}. The gradient of the $\Vert \Cdot \Vert_{\FlowSpace{2}}^2$ is straightforward (\cref{sec:ShapeFuncGrad}) and all these pieces are put together in \cref{sec:GradObjFunc} to obtain expressions for the gradient of the objective functional in \cref{eq:GenericGoalFunctionalV}. The explicit expressions for the gradient assume $\LieAlgebra$ is an admissible \ac{RKHS} with a continuous positive definite reproducing kernel $\Koperator \colon \domain \times \domain \to \LinearSpace(\Real^{n}, \Real^{n})$ represented by a function $\kernel \colon \domain \times \domain \to \Matrix_{+}^{n \times n}$ (see \cref{sec:RKHS}). 

\subsection{The deformation operator}\label{sec:DeforOpDiff}
The aim here is to compute the derivative of the deformation operator 
$\DeforOpX \colon \FlowSpace{2} \times \RecSpace \to \RecSpace$
in \cref{eq:DeforOpX}. The expressions will depend on the group action, see \cref{sec:DeforOpGeomAction} for the case with geometric group action and \cref{sec:DeforOpMassPre} for the case with mass-preserving group action. The starting point however is to derive the derivative of diffeomorphisms in $\DiffeoGroup$ \wrt the underlying velocity field.

\subsubsection{Derivative of diffeomorphisms \wrt the velocity field}
Each diffeomorphism in $\DiffeoGroup$ depends on an underlying velocity field in $\FlowSpace{2}$. The derivative of such a diffeomorphism \wrt the underlying velocity field can be explicitly computed and is given by the following theorem. 
\begin{proposition}\label{thm:DerivDiffeoV}
Let $\velocityfield,\velocityfieldother \in \FlowSpace{2}$ and let $\gelement{s,t}{\velocityfield} \in \DiffeoGroup$ be a a solution to \cref{eq:FlowEq}. Then,
\begin{align}
  \label{eq:PhiDeriv}
  \frac{d}{d \epsilon} \bigl( \gelement{s,t}{\velocityfield + \epsilon\velocityfieldother} \bigr) \Bigl\vert_{\epsilon=0} (x)
    &= \int_{s}^{t} \Diff\bigl( \gelement{\tau,t}{\velocityfield} \bigr)\bigl( \gelement{s,\tau}{\velocityfield}(x) \bigr) 
             \Bigl( \velocityfieldother\bigl(\tau, \gelement{s,\tau}{\velocityfield}(x) \bigr) \Bigr) \dint \tau 
    \\    
    \label{eq:PhiInvDeriv}
    \frac{d}{d \epsilon} \bigr( (\gelement{s,t}{\velocityfield + \epsilon\velocityfieldother})^{-1} \bigr)\Bigl\vert_{\epsilon=0} (x)
    &=  - \int_{s}^{t} \Bigl[ \Diff\bigl(\gelement{s,\tau}{\velocityfield}\bigr)\bigl( \gelement{t,s}{\velocityfield}(x)\bigr) \Bigr]^{-1}
           \Bigl( \velocityfieldother\bigl(\tau,\gelement{t,\tau}{\velocityfield}(x)\bigr) \Bigr) \dint \tau
\end{align}
for $x \in \domain$ and $0 \leq s,t \leq 1$. 
\end{proposition}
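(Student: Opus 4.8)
The plan is to obtain both formulas by differentiating the flow equation \cref{eq:FlowEq} in the perturbation parameter and recognizing the resulting linear variational equation as one whose state transition matrix is the spatial Jacobian of the flow. Fix $x \in \domain$ and $0 \le s,t \le 1$, and for $\epsilon$ near $0$ put $\psi_\epsilon(t) := \gelement{s,t}{\velocityfield + \epsilon\velocityfieldother}(x)$; by \cref{thm:FlowExistence} (with initial time $s$) this is the unique solution of $\frac{d}{dt}\psi_\epsilon(t) = (\velocityfield + \epsilon\velocityfieldother)\bigl(t,\psi_\epsilon(t)\bigr)$ with $\psi_\epsilon(s) = x$. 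Since $p$-admissibility of $\LieAlgebra$ forces $\velocityfield(t,\Cdot),\velocityfieldother(t,\Cdot) \in \VectorFields^1_0(\domain,\Real^n)$ with spatial derivatives bounded uniformly in $t$, the standard theory of differentiable dependence of \ac{ODE} solutions on parameters applies: $\epsilon \mapsto \psi_\epsilon(t)$ is differentiable and $\xi(t) := \frac{d}{d\epsilon}\psi_\epsilon(t)\big\vert_{\epsilon=0}$ solves the linear inhomogeneous system
\begin{equation*}
  \frac{d}{dt}\xi(t) = \Diff\velocityfield\bigl(t,\gelement{s,t}{\velocityfield}(x)\bigr)\,\xi(t) + \velocityfieldother\bigl(t,\gelement{s,t}{\velocityfield}(x)\bigr),
  \qquad \xi(s) = 0 .
\end{equation*}
Justifying the interchange of $\partial_\epsilon$ and $\partial_t$, together with the uniform Gr\"onwall estimate controlling $\psi_\epsilon - \gelement{s,\Cdot}{\velocityfield}$ that legitimizes linearization, is the only genuinely technical point, and it is precisely where admissibility of $\LieAlgebra$ is used; everything afterwards is algebra with the chain rule and the relations \cref{eq:phiRelations}.

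Next I identify the state transition matrix of the homogeneous part. Differentiating \cref{eq:FlowEq} in the spatial variable shows that $t \mapsto \Diff\bigl(\gelement{s,t}{\velocityfield}\bigr)(x)$ solves the matrix equation $\frac{d}{dt}M(t) = \Diff\velocityfield\bigl(t,\gelement{s,t}{\velocityfield}(x)\bigr)M(t)$ with $M(s) = \Id$, so it is the transition matrix from time $s$. Combining with the composition relation $\gelement{s,t}{\velocityfield} = \gelement{\tau,t}{\velocityfield}\circ\gelement{s,\tau}{\velocityfield}$ from \cref{eq:phiRelations} and the chain rule gives $\Diff\bigl(\gelement{s,t}{\velocityfield}\bigr)(x) = \Diff\bigl(\gelement{\tau,t}{\velocityfield}\bigr)\bigl(\gelement{s,\tau}{\velocityfield}(x)\bigr)\,\Diff\bigl(\gelement{s,\tau}{\velocityfield}\bigr)(x)$, hence the transition matrix from time $\tau$ to time $t$ along the trajectory through $x$ is $\Diff\bigl(\gelement{\tau,t}{\velocityfield}\bigr)\bigl(\gelement{s,\tau}{\velocityfield}(x)\bigr)$. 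Duhamel's variation-of-parameters formula applied to the variational equation for $\xi$ then yields $\xi(t) = \int_s^t \Diff\bigl(\gelement{\tau,t}{\velocityfield}\bigr)\bigl(\gelement{s,\tau}{\velocityfield}(x)\bigr)\bigl(\velocityfieldother\bigl(\tau,\gelement{s,\tau}{\velocityfield}(x)\bigr)\bigr)\dint\tau$, which is exactly \cref{eq:PhiDeriv}.

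Finally, \cref{eq:PhiInvDeriv} follows from \cref{eq:PhiDeriv} by symmetry. Since $(\gelement{s,t}{\velocityfield + \epsilon\velocityfieldother})^{-1} = \gelement{t,s}{\velocityfield + \epsilon\velocityfieldother}$ by \cref{eq:phiRelations}, applying \cref{eq:PhiDeriv} with the roles of $s$ and $t$ interchanged gives $\frac{d}{d\epsilon}\bigl(\gelement{s,t}{\velocityfield + \epsilon\velocityfieldother}\bigr)^{-1}\big\vert_{\epsilon=0}(x) = \int_t^s \Diff\bigl(\gelement{\tau,s}{\velocityfield}\bigr)\bigl(\gelement{t,\tau}{\velocityfield}(x)\bigr)\bigl(\velocityfieldother\bigl(\tau,\gelement{t,\tau}{\velocityfield}(x)\bigr)\bigr)\dint\tau$. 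One then rewrites $\int_t^s = -\int_s^t$ and simplifies the matrix: $\gelement{\tau,s}{\velocityfield} = (\gelement{s,\tau}{\velocityfield})^{-1}$ together with the inverse function theorem gives $\Diff\bigl(\gelement{\tau,s}{\velocityfield}\bigr)\bigl(\gelement{t,\tau}{\velocityfield}(x)\bigr) = \bigl[\Diff\bigl(\gelement{s,\tau}{\velocityfield}\bigr)\bigl(\gelement{\tau,s}{\velocityfield}(\gelement{t,\tau}{\velocityfield}(x))\bigr)\bigr]^{-1}$, and $\gelement{\tau,s}{\velocityfield}\bigl(\gelement{t,\tau}{\velocityfield}(x)\bigr) = \gelement{t,s}{\velocityfield}(x)$ by \cref{eq:phiRelations}, so the matrix equals $\bigl[\Diff\bigl(\gelement{s,\tau}{\velocityfield}\bigr)\bigl(\gelement{t,s}{\velocityfield}(x)\bigr)\bigr]^{-1}$, which is precisely the expression in \cref{eq:PhiInvDeriv}. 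The main obstacle throughout is the rigorous differentiability of the flow in $\epsilon$ and the attendant Gr\"onwall-type estimates; once the variational equation is established, the two identities are obtained by Duhamel and bookkeeping with \cref{eq:phiRelations}.
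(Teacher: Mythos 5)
Your proof is correct, and it is genuinely more self-contained than the paper's: the paper disposes of both identities by citation, deriving \cref{eq:PhiDeriv} from \cite[Lemma~11.5]{GrMi07} (equivalently \cite[Theorem~8.10]{Yo10}) and \cref{eq:PhiInvDeriv} from \cite[Lemma~12.8]{GrMi07} together with \cref{eq:phiRelations}. What you supply for \cref{eq:PhiDeriv} --- the variational equation for $\xi$, the identification of $\Diff\bigl(\gelement{\tau,t}{\velocityfield}\bigr)\bigl(\gelement{s,\tau}{\velocityfield}(x)\bigr)$ as the state transition matrix via the chain rule applied to $\gelement{s,t}{\velocityfield}=\gelement{\tau,t}{\velocityfield}\circ\gelement{s,\tau}{\velocityfield}$, and Duhamel's formula --- is precisely the content of the cited lemma, so you are in effect reproving the reference rather than diverging from it. Your route to \cref{eq:PhiInvDeriv} is arguably cleaner than the paper's: instead of invoking a separate lemma for the derivative of the inverse, you note that $(\gelement{s,t}{\velocityfield+\epsilon\velocityfieldother})^{-1}=\gelement{t,s}{\velocityfield+\epsilon\velocityfieldother}$ and apply \cref{eq:PhiDeriv} with $s$ and $t$ interchanged, then rewrite the Jacobian using the inverse function theorem and the composition $\gelement{\tau,s}{\velocityfield}\circ\gelement{t,\tau}{\velocityfield}=\gelement{t,s}{\velocityfield}$; this makes the second identity a corollary of the first, which buys a shorter dependency chain (note your application of \cref{eq:PhiDeriv} with the endpoints swapped implicitly uses that the formula holds for $t<s$ as well, which is true since the variational equation and Duhamel are orientation-free, but is worth saying). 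One small imprecision: admissibility together with $\velocityfield\in\FlowSpace{2}$ gives $t\mapsto\bigl\Vert\velocityfield(t,\Cdot)\bigr\Vert_{1,\infty}$ in $\LpSpace^1([0,1])$, not a uniform-in-$t$ bound on the spatial derivatives; the Carath\'eodory-type existence theory and the Gr\"onwall estimate you invoke go through with time-integrable Lipschitz constants, but the hypothesis should be stated in that form.
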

\begin{proof}
The first equality \cref{eq:PhiDeriv} follows directly from \cite[Lemma~11.5]{GrMi07} (see also \cite[Theorem~8.10]{Yo10}) and 
the second, \cref{eq:PhiInvDeriv}, follows directly from \cite[Lemma~12.8]{GrMi07} (see 
also \cite[eq.~(10.16)]{Yo10}) and \cref{eq:phiRelations}.
\end{proof}

\subsubsection{Geometric group action}\label{sec:DeforOpGeomAction}
For the (left) geometric group action in \cref{eq:Geometric}, the deformation operator becomes 
\begin{equation}\label{eq:DeforOpGeom}
  \DeforOpX(\velocityfield,\template) = \template \circ (\gelement{0,1}{\velocityfield})^{-1} 
    = \template \circ \gelement{1,0}{\velocityfield}
\end{equation}  
where the last equality follows from \cref{eq:phiRelations}. Its G\^ateaux derivative is given by the next result.
\begin{theorem}\label{thm:DiffDeforOpGeom}
Let $\LieAlgebra$�be admissible and $\template \in \RecSpace$ is differentiable. The deformation operator 
$\DeforOpX(\Cdot,\template) \colon \FlowSpace{2} \to \RecSpace$ in \cref{eq:DeforOpGeom} is then G\^ateaux differentiable 
at $\velocityfield \in \FlowSpace{2}$ and its G\^ateaux derivative is 
\begin{equation}\label{eq:DiffDeforOpGeom}
  \GDiff \DeforOpX(\velocityfield,\template)(\velocityfieldother)(x) 
  = - \int_{0}^{1} 
      \biggl\langle 
      \grad (\template \circ \gelement{t,0}{\velocityfield})\bigl( \gelement{1,t}{\velocityfield}(x)\bigr),
           \velocityfieldother\bigl(t,\gelement{1,t}{\velocityfield}(x) \Bigr) 
      \biggr\rangle_{\!\!\Real^n}\!\!\!\! \dint t                        
\end{equation}  
for $x \in \domain$ and $\velocityfieldother \in \FlowSpace{2}$. 
\end{theorem}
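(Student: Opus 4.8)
The plan is to compute the G\^ateaux derivative of the composition $\DeforOpX(\Cdot,\template) = \template \circ \gelement{1,0}{\Cdot}$ by differentiating in the direction $\velocityfieldother$ at $\velocityfield$, using the chain rule together with the explicit formula \cref{eq:PhiInvDeriv} for the derivative of the inverse flow (equivalently, \cref{eq:PhiDeriv} for $\gelement{1,0}{\Cdot}$). First I would fix $x \in \domain$ and write
\[
  \frac{d}{d\epsilon} \DeforOpX(\velocityfield + \epsilon\velocityfieldother,\template)(x)\Bigl\vert_{\epsilon=0}
  = \frac{d}{d\epsilon} \template\bigl( \gelement{1,0}{\velocityfield + \epsilon\velocityfieldother}(x) \bigr)\Bigl\vert_{\epsilon=0}
  = \Bigl\langle \grad\template\bigl(\gelement{1,0}{\velocityfield}(x)\bigr),\ \frac{d}{d\epsilon}\gelement{1,0}{\velocityfield + \epsilon\velocityfieldother}(x)\Bigl\vert_{\epsilon=0} \Bigr\rangle_{\Real^n},
\]
which is legitimate since $\template$ is differentiable and $\velocityfield \mapsto \gelement{1,0}{\velocityfield}(x)$ is differentiable by \cref{thm:DerivDiffeoV}. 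The inner variation is given by \cref{eq:PhiDeriv} with $s=1$, $t=0$ (or \cref{eq:PhiInvDeriv} with $s=0$, $t=1$, using $(\gelement{0,1}{\velocityfield})^{-1} = \gelement{1,0}{\velocityfield}$ from \cref{eq:phiRelations}): it equals an integral over $\tau \in [0,1]$ of $\bigl[\Diff(\gelement{0,\tau}{\velocityfield})(\gelement{1,0}{\velocityfield}(x))\bigr]^{-1}\bigl(\velocityfieldother(\tau, \gelement{1,\tau}{\velocityfield}(x))\bigr)$, up to sign.

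The remaining work is bookkeeping: substitute this integral expression into the inner product, move $\grad\template(\gelement{1,0}{\velocityfield}(x))$ inside the $\tau$-integral, and recognize the pairing $\bigl\langle \grad\template(\gelement{1,0}{\velocityfield}(x)),\ [\Diff(\gelement{0,\tau}{\velocityfield})(\gelement{1,0}{\velocityfield}(x))]^{-1}(w)\bigr\rangle_{\Real^n}$ as $\bigl\langle [\Diff(\gelement{0,\tau}{\velocityfield})(\gelement{1,0}{\velocityfield}(x))]^{-T}\grad\template(\gelement{1,0}{\velocityfield}(x)),\ w\bigr\rangle_{\Real^n}$. Then I would identify the transposed-Jacobian factor times $\grad\template$ with a chain-rule derivative: by differentiating $\template \circ \gelement{t,0}{\velocityfield}$ and using $\gelement{t,0}{\velocityfield} = \gelement{0,0}{\velocityfield}\circ\cdots$, one has $\grad(\template\circ\gelement{t,0}{\velocityfield})(y) = \Diff(\gelement{t,0}{\velocityfield})(y)^{T}\,\grad\template(\gelement{t,0}{\velocityfield}(y))$; evaluating at $y = \gelement{1,t}{\velocityfield}(x)$ and using $\gelement{t,0}{\velocityfield}\circ\gelement{1,t}{\velocityfield} = \gelement{1,0}{\velocityfield}$ together with $\Diff(\gelement{0,\tau}{\velocityfield})(\gelement{1,0}{\velocityfield}(x))^{-1}$ versus $\Diff(\gelement{\tau,0}{\velocityfield})$ via the inverse-function-theorem relation $\Diff(\gelement{\tau,0}{\velocityfield})(\gelement{0,\tau}{\velocityfield}(z)) = [\Diff(\gelement{0,\tau}{\velocityfield})(z)]^{-1}$, the integrand collapses to $\bigl\langle \grad(\template\circ\gelement{t,0}{\velocityfield})(\gelement{1,t}{\velocityfield}(x)),\ \velocityfieldother(t,\gelement{1,t}{\velocityfield}(x))\bigr\rangle_{\Real^n}$, which after relabeling $\tau \leftrightarrow t$ and carrying the overall minus sign is exactly \cref{eq:DiffDeforOpGeom}.

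I expect the main obstacle to be two-fold. First, justifying the interchange of $\frac{d}{d\epsilon}$ with the $\tau$-integral and with $\template$-evaluation rigorously (rather than formally): this needs the continuity/boundedness estimates on flows from the admissibility of $\LieAlgebra$ (so $\Diff(\gelement{s,t}{\velocityfield})$ is bounded uniformly on $\domain$) and a dominated-convergence argument, plus the fact, cited from \cite[Chapter~2]{Vi09}, that the difference quotients converge in the appropriate sense; since the theorem only claims G\^ateaux differentiability, a direct $\epsilon \to 0$ argument on the difference quotient suffices and I would phrase it that way. Second — and this is the genuinely fiddly part — getting all the Jacobian transposes, inverses, and time-index compositions to line up correctly, in particular verifying the identity $\grad(\template\circ\gelement{t,0}{\velocityfield})(\gelement{1,t}{\velocityfield}(x)) = \Diff(\gelement{t,0}{\velocityfield})(\gelement{1,t}{\velocityfield}(x))^{T}\grad\template(\gelement{1,0}{\velocityfield}(x))$ and matching it against $[\Diff(\gelement{0,t}{\velocityfield})(\gelement{1,0}{\velocityfield}(x))]^{-T}$; here the relations in \cref{eq:phiRelations} and the chain rule for Jacobians are the only tools needed, but the signs and argument-substitutions must be tracked with care. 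Everything else is a routine computation.
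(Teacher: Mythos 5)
Your proposal is correct and follows essentially the same route as the paper's proof: chain rule on $\template \circ \gelement{1,0}{\velocityfield+\epsilon\velocityfieldother}$, the integral formula \cref{eq:PhiInvDeriv} from \cref{thm:DerivDiffeoV} for the inner variation, and then the Jacobian/adjoint bookkeeping via \cref{eq:phiRelations} to collapse the integrand to $\grad(\template\circ\gelement{t,0}{\velocityfield})$ evaluated at $\gelement{1,t}{\velocityfield}(x)$. The only (immaterial) difference is that the paper routes the Jacobian identification through the intermediate factorization $[\Diff(\gelement{0,t}{\velocityfield})(\gelement{1,0}{\velocityfield}(x))]^{-1} = \Diff(\gelement{1,0}{\velocityfield})(x)\circ[\Diff(\gelement{1,t}{\velocityfield})(x)]^{-1}$, whereas you identify the transposed inverse Jacobian with $\Diff(\gelement{t,0}{\velocityfield})$ directly via the inverse function theorem.
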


\begin{proof}
The G\^ateaux derivative is a linear mapping
$\GDiff \DeforOpX(\velocityfield,\template) \colon \FlowSpace{2} \to  \RecSpace$
given as 
\[  \GDiff \DeforOpX(\velocityfield, \template)(\velocityfieldother)(x) := 
       \frac{d}{d \epsilon} \DeforOpX(\velocityfield+ \epsilon\velocityfieldother,\template)(x)\Bigl\vert_{\epsilon=0}
       = \frac{d}{d \epsilon} \Bigl( \template \circ (\gelement{0,1}{\velocityfield+ \epsilon\velocityfieldother})^{-1} \Bigr)(x)\Bigl\vert_{\epsilon=0}
\]       
for $\velocityfieldother \in \FlowSpace{2}$ and $x \in \domain$. Since $\template \in \RecSpace \subset \LpSpace^{2}(\domain,\Real)$ is differentiable, the chain rule yields 
\begin{equation}\label{eq:WDiff}  
  \GDiff \DeforOpX(\velocityfield, \template)(\velocityfieldother)(x) = 
      \biggl\langle 
      \grad \template \bigl(\gelement{1,0}{\velocityfield}(x)\bigr),
      \frac{d}{d \epsilon} \bigl( (\gelement{0,1}{\velocityfield+ \epsilon\velocityfieldother})^{-1} \bigr)\Bigl\vert_{\epsilon=0} (x)     
      \biggr\rangle_{\!\!\Real^n}
       \quad\text{for $x \in \domain$.}      
\end{equation}
The second term in the scalar product on the right hand side of \cref{eq:WDiff} is the derivative of a flow with 
respect to variations in the associated  field. The following equation now follows 
from \cref{eq:PhiInvDeriv} in \cref{thm:DerivDiffeoV}.
\begin{equation}\label{eq:DiffDeforOpGeom_eq1}
  \GDiff \DeforOpX(\velocityfield,\template)(\velocityfieldother)(x) = 
      - \int_{0}^{1} 
      \biggl\langle 
      \grad \template \bigl(\gelement{1,0}{\velocityfield}(x)\bigr),
      \Bigl[ \Diff\bigl(\gelement{0,t}{\velocityfield}\bigr)\bigl( \gelement{1,0}{\velocityfield}(x) \bigr) \Bigr]^{-1}
           \Bigl( \velocityfieldother\bigl(t,\gelement{1,t}{\velocityfield}(x)\bigr) \Bigr) 
      \biggr\rangle_{\!\!\Real^n}\!\!\!\! \dint t. 
\end{equation}

To prove \cref{eq:DiffDeforOpGeom}, consider first the chain rule: 
\begin{displaymath}
  \Diff( \gelement{0,t}{\velocityfield}\circ \gelement{1,0}{\velocityfield} )(x) = 
        \Diff(\gelement{0,t}{\velocityfield})\bigl( \gelement{1,0}{\velocityfield}(x) \bigr) \circ \Diff(\gelement{1,0}{\velocityfield})(x).
\end{displaymath}     
Next, $\gelement{0,t}{\velocityfield}\circ \gelement{1,0}{\velocityfield}= \gelement{1,t}{\velocityfield}$, so  
\begin{displaymath}
  \Diff(\gelement{0,t}{\velocityfield})\bigl( \gelement{1,0}{\velocityfield}(x) \bigr) =
      \Diff( \gelement{1,t}{\velocityfield} )(x) 
         \circ \bigr[ \Diff(\gelement{1,0}{\velocityfield})(x)\bigl]^{-1},
\end{displaymath}    
which in turn implies that, 
\begin{equation}\label{eq:SimplerDiffEq}
  \Bigl[  \Diff(\gelement{0,t}{\velocityfield})\bigl( \gelement{1,0}{\velocityfield}(x) \bigr) \Bigr]^{-1} 
    =  \Diff(\gelement{1,0}{\velocityfield})(x) \circ \Bigl[ \Diff( \gelement{1,t}{\velocityfield} )(x) \Bigr]^{-1}.
\end{equation}
Inserting \cref{eq:SimplerDiffEq} into \cref{eq:DiffDeforOpGeom_eq1} yields
\begin{align*}
  \GDiff \DeforOpX(\velocityfield, \template)(\velocityfieldother)(x) 
   & = - \int_{0}^{1} 
      \biggl\langle 
      \grad \template \bigl(\gelement{1,0}{\velocityfield}(x)\bigr),
        \Diff(\gelement{1,0}{\velocityfield})(x) \circ \Bigl[ \Diff( \gelement{1,t}{\velocityfield} )(x) \Bigr]^{-1}
        \Bigl( \velocityfieldother\bigl(t,\gelement{1,t}{\velocityfield}(x)\bigr) \Bigr) 
      \biggr\rangle_{\!\!\Real^n}\!\!\!\! \dint t \\
   &  = - \int_{0}^{1} 
      \biggl\langle 
        \bigr[ \Diff(\gelement{1,0}{\velocityfield})(x)\bigl]^{\ast} \Bigl( \grad \template \bigl(\gelement{1,0}{\velocityfield}(x)\bigr) \Bigr),
        \Bigl[ \Diff( \gelement{1,t}{\velocityfield} )(x) \Bigr]^{-1}
        \Bigl( \velocityfieldother\bigl(t,\gelement{1,t}{\velocityfield}(x)\bigr) \Bigr) 
      \biggr\rangle_{\!\!\Real^n}\!\!\!\! \dint t \\      
 &  = - \int_{0}^{1} 
      \biggl\langle 
      \grad (\template \circ \gelement{1,0}{\velocityfield})(x),
      \bigl[ \Diff( \gelement{1,t}{\velocityfield} )(x) \bigr]^{-1} 
           \Bigl( \velocityfieldother\bigl(t,\gelement{1,t}{\velocityfield}(x)\bigr) \Bigr) 
      \biggr\rangle_{\!\!\Real^n}\!\!\!\! \dint t \\
 &  = - \int_{0}^{1} 
      \biggl\langle 
      \grad (\template \circ \gelement{t,0}{\velocityfield})\bigl( \gelement{1,t}{\velocityfield}(x)\bigr),
           \velocityfieldother\bigl(t,\gelement{1,t}{\velocityfield}(x) \Bigr) 
      \biggr\rangle_{\!\!\Real^n}\!\!\!\! \dint t.
      \end{align*}                  
The last equality above follows from
\begin{displaymath}
\grad (\template \circ \gelement{1,0}{\velocityfield})(x) = 
  \grad( \template \circ \gelement{t,0}{\velocityfield} \circ \gelement{1,t}{\velocityfield})(x) 
  = \bigr[ \Diff(\gelement{1,t}{\velocityfield})(x)\bigl]^{\ast} 
       \Bigl( \grad ( \template \circ \gelement{t,0}{\velocityfield} )\bigl( \gelement{1,t}{\velocityfield}(x) \bigr)
       \Bigr).       
\end{displaymath}
This concludes the proof of \cref{thm:DiffDeforOpGeom}.
\end{proof}

\subsubsection{Mass-preserving group action}\label{sec:DeforOpMassPre}
Under the (left) mass-preserving group action in \cref{eq:MassPreserving}, the deformation operator becomes 
\begin{equation}\label{eq:DeforOpMP}
  \DeforOpX(\velocityfield, \template) 
    = \bigl\vert \Diff \bigl((\gelement{0,1}{\velocityfield})^{-1} \bigr) \bigr\vert\, \bigl( \template \circ (\gelement{0,1}{\velocityfield})^{-1} \bigr) 
    = \bigl\vert \Diff (\gelement{1,0}{\velocityfield}) \bigr\vert\, \bigl( \template \circ \gelement{1,0}{\velocityfield} \bigr)
\end{equation}
where the last equality follows from \cref{eq:phiRelations}. Its G\^ateaux derivative is given in the next result.
\begin{theorem}\label{thm:DiffDeforOpMP}
If $\template \in \RecSpace$ is differentiable, then $\DeforOpX(\Cdot,\template) \colon \FlowSpace{2} \to \RecSpace$ in \cref{eq:DeforOpMP} is G\^ateaux differentiable 
at $\velocityfield \in \FlowSpace{2}$ and its G\^ateaux derivative is 
\begin{equation}\label{eq:DiffDeforOpMP}
  \GDiff \DeforOpX(\velocityfield, \template)(\velocityfieldother)(x) = 
  \bigl\vert\Diff \bigl(\gelement{1,0}{\velocityfield}\bigr)(x)\bigr\vert 
    \Div\bigl(\template(\Cdot) h(\Cdot)\bigr) \circ \gelement{1,0}{\velocityfield} (x)
\quad\text{for $x \in \domain$ and $\velocityfieldother \in \FlowSpace{2}$,}
  \end{equation}  
where
\begin{equation}\label{eq:h_simplify}
 h(x) := -\int_{0}^{1} 
     \Diff\bigl( \gelement{t,0}{\velocityfield} \bigr)\bigl( \gelement{0,t}{\velocityfield}(x) \bigr)  
     \Bigl( \velocityfieldother\bigl(t, \gelement{0,t}{\velocityfield}(x) \bigr) \Bigr)  
   \dint t.
\end{equation} 
\end{theorem}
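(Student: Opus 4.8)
Proof proposal. The plan is to differentiate \cref{eq:DeforOpMP} in $\epsilon$ at $\epsilon=0$ directly, in the same spirit as the proof of \cref{thm:DiffDeforOpGeom} but keeping track of the extra Jacobian factor. Abbreviate $\psi_\epsilon := \gelement{1,0}{\velocityfield+\epsilon\velocityfieldother}$ and $\psi := \psi_0 = \gelement{1,0}{\velocityfield}$, so that $\DeforOpX(\velocityfield+\epsilon\velocityfieldother,\template)(x) = \bigl\vert\Diff\psi_\epsilon(x)\bigr\vert\,\template\bigl(\psi_\epsilon(x)\bigr)$. Since each $\gelement{0,1}{\velocityfield+\epsilon\velocityfieldother}$ is isotopic to the identity (\cref{thm:FlowExistence}), its inverse $\psi_\epsilon$ is orientation preserving, hence $\bigl\vert\Diff\psi_\epsilon(x)\bigr\vert = \Det\Diff\psi_\epsilon(x) > 0$; moreover $\epsilon\mapsto\psi_\epsilon$ is differentiable by \cref{thm:DerivDiffeoV} and $\template$ is differentiable by hypothesis, so the product rule in $\epsilon$ gives
\begin{equation*}
  \GDiff\DeforOpX(\velocityfield,\template)(\velocityfieldother)(x)
  = \Bigl(\tfrac{d}{d\epsilon}\bigl\vert\Diff\psi_\epsilon(x)\bigr\vert\Bigr)\Big\vert_{\epsilon=0}\,\template\bigl(\psi(x)\bigr)
    + \bigl\vert\Diff\psi(x)\bigr\vert\,\bigl\langle\grad\template\bigl(\psi(x)\bigr),\dot\psi(x)\bigr\rangle_{\Real^n},
\end{equation*}
where $\dot\psi(x):=\tfrac{d}{d\epsilon}\psi_\epsilon(x)\big\vert_{\epsilon=0}$.

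For the first variation of the flow, apply \cref{eq:PhiDeriv} of \cref{thm:DerivDiffeoV} with $(s,t)=(1,0)$ to get $\dot\psi(x) = -\int_0^1 \Diff\bigl(\gelement{t,0}{\velocityfield}\bigr)\bigl(\gelement{1,t}{\velocityfield}(x)\bigr)\bigl(\velocityfieldother(t,\gelement{1,t}{\velocityfield}(x))\bigr)\dint t$. Substituting $\gelement{1,t}{\velocityfield} = \gelement{0,t}{\velocityfield}\circ\gelement{1,0}{\velocityfield}$ from \cref{eq:phiRelations} and comparing with \cref{eq:h_simplify} shows that $\dot\psi(x) = h\bigl(\gelement{1,0}{\velocityfield}(x)\bigr)$, i.e.\ $\dot\psi = h\circ\psi$; in other words $h$ is precisely the spatial (Eulerian) representative of $\dot\psi$.

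For the Jacobian term, differentiate $\Det\Diff\psi_\epsilon(x)$ using Jacobi's formula together with the interchange $\tfrac{d}{d\epsilon}\big\vert_0\Diff\psi_\epsilon(x) = \Diff\dot\psi(x)$:
\begin{equation*}
  \Bigl(\tfrac{d}{d\epsilon}\bigl\vert\Diff\psi_\epsilon(x)\bigr\vert\Bigr)\Big\vert_{\epsilon=0}
   = \bigl\vert\Diff\psi(x)\bigr\vert\,\operatorname{tr}\!\Bigl([\Diff\psi(x)]^{-1}\,\Diff\dot\psi(x)\Bigr).
\end{equation*}
Since $\dot\psi = h\circ\psi$, the chain rule gives $\Diff\dot\psi(x) = \Diff h\bigl(\psi(x)\bigr)\,\Diff\psi(x)$, and cyclicity of the trace turns the right-hand side into $\bigl\vert\Diff\psi(x)\bigr\vert\,\bigl(\Div h\bigr)\bigl(\psi(x)\bigr)$. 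Inserting both terms into the product-rule expression above and recognising the identity $\Div(\template h) = \template\,\Div h + \langle\grad\template,h\rangle_{\Real^n}$ yields
\begin{equation*}
  \GDiff\DeforOpX(\velocityfield,\template)(\velocityfieldother)(x)
   = \bigl\vert\Diff\psi(x)\bigr\vert\,\bigl[\Div(\template h)\bigr]\bigl(\psi(x)\bigr)
   = \bigl\vert\Diff\gelement{1,0}{\velocityfield}(x)\bigr\vert\,\Div\bigl(\template(\Cdot)h(\Cdot)\bigr)\circ\gelement{1,0}{\velocityfield}(x),
\end{equation*}
which is \cref{eq:DiffDeforOpMP}.

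The algebra is short; the main obstacle is the analytic bookkeeping behind it. One must justify that $\epsilon\mapsto\psi_\epsilon$ is differentiable as a curve into a space of $\Smooth^1$ diffeomorphisms so that $\tfrac{d}{d\epsilon}$ commutes with $\Diff$, and that the integrals defining $\dot\psi$ and $h$ may be differentiated under the integral sign; this is exactly where the admissibility of $\LieAlgebra$ and the regularity theory underlying \cref{thm:DerivDiffeoV} and \cite[Chapter~2]{Vi09} enter. Since $\Div h$ appears explicitly, one also needs enough spatial smoothness of the flow (e.g.\ $p\geq 2$, or a distributional reading of the identity) for $h$ to be differentiable. Granted these points, the computation is routine, and it extends to non-smooth templates $\template\in\SBV(\domain,\Real)\bigcap\LpSpace^{\infty}(\domain,\Real)$ along the lines of \cite[Chapter~2]{Vi09}, as anticipated in the introduction to this section.
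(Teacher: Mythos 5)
Your proposal is correct and follows essentially the same route as the paper's proof: product rule on the two factors of \cref{eq:DeforOpMP}, the flow-variation formula \cref{eq:PhiDeriv} giving $\dot\psi = h\circ\gelement{1,0}{\velocityfield}$, the first-order variation of the Jacobian determinant yielding $\bigl\vert\Diff\psi\bigr\vert\,(\Div h)\circ\psi$, and the identity $\Div(\template h)=\template\,\Div h+\langle\grad\template,h\rangle_{\Real^n}$ to assemble \cref{eq:DiffDeforOpMP}. The only difference is presentational: you justify the determinant step explicitly via Jacobi's formula and cyclicity of the trace, where the paper simply asserts the corresponding Taylor expansion.
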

\begin{proof}
The G\^ateaux derivative is the linear mapping
$\GDiff \DeforOpX(\velocityfield, \template) \colon \FlowSpace{2} \to  \RecSpace$
given as 
\begin{displaymath}
  \GDiff \DeforOpX(\velocityfield, \template)(\velocityfieldother)(x) := 
       \frac{d}{d \epsilon} \DeforOpX(\velocityfield+ \epsilon\velocityfieldother,\template)(x)\Bigl\vert_{\epsilon=0}
       = \frac{d}{d \epsilon} \bigl\vert\Diff \bigl(\gelement{1,0}{\velocityfield+ \epsilon\velocityfieldother}\bigr)(x)\bigr\vert \template \circ \gelement{1,0}{\velocityfield+ \epsilon\velocityfieldother}(x)\Bigl\vert_{\epsilon=0}
\end{displaymath}       
for $\velocityfieldother \in \FlowSpace{2}$ and $x \in \domain$. Now, by \cref{eq:PhiDeriv}
\begin{align*}
\frac{d}{d \epsilon} \gelement{1,0}{\velocityfield + \epsilon\velocityfieldother} \Bigl\vert_{\epsilon=0} (x)
  &= \int_{1}^{0} \Diff\bigl( \gelement{t,0}{\velocityfield} \bigr)\bigl( \gelement{1,t}{\velocityfield}(x) \bigr) 
             \Bigl( \velocityfieldother\bigl(t, \gelement{1,t}{\velocityfield}(x) \bigr) \Bigr) \dint t 
\\
&= -\int_{0}^{1} 
       \Diff\bigl( \gelement{t,0}{\velocityfield} \bigr)
       \bigl( \gelement{0,t}{\velocityfield} \circ\gelement{1,0}{\velocityfield}(x) \bigr) 
       \Bigl( \velocityfieldother\bigl(t, \gelement{0,t}{\velocityfield} \circ \gelement{1,0}{\velocityfield}(x) \bigr) \Bigr) 
     \dint t 
\\
&= \biggl[ -\int_{0}^{1} 
      \Diff\bigl( \gelement{t,0}{\velocityfield} \bigr)
      \bigl( \gelement{0,t}{\velocityfield}(\Cdot) \bigr)  
      \Bigl( \velocityfieldother\bigl(t, \gelement{0,t}{\velocityfield}(\Cdot) \bigr) \Bigr)  
    \dint t
    \biggr] \circ
    \gelement{1,0}{\velocityfield}(x) 
 = h(\Cdot) \circ\gelement{1,0}{\velocityfield}(x),
\end{align*}
where the last equation uses the definition of $h(\Cdot)$ by \cref{eq:h_simplify}.

Taking a Taylor expansion of $x \mapsto \gelement{1,0}{\velocityfield + \epsilon\velocityfieldother}(x)$ at $\epsilon=0$ gives  
\begin{equation}\label{eq:taylor_expansion_1}
  \gelement{1,0}{\velocityfield + \epsilon\velocityfieldother}(x)
  = \gelement{1,0}{\velocityfield}(x)  + \epsilon \, (h \circ\gelement{1,0}{\velocityfield})(x) + o(\epsilon)
\end{equation}
which in turn implies 
\begin{displaymath} 
\bigl\vert\Diff \bigl(\gelement{1,0}{\velocityfield+ \epsilon\velocityfieldother}\bigr)(x)\bigr\vert 
= \bigl\vert\Diff \bigl(\gelement{1,0}{\velocityfield}\bigr)(x)\bigr\vert 
   + \epsilon \bigl\vert\Diff \bigl(\gelement{1,0}{\velocityfield}\bigr)(x)\bigr\vert \bigl(\Div (h) \circ \gelement{1,0}{\velocityfield} \bigr)(x) 
   + o(\epsilon).
\end{displaymath} 
Taking the derivative \wrt $\epsilon$ and evaluating it at $\epsilon=0$ yields 
\begin{equation}\label{eq:taylor_expansion_3}
\frac{d}{d \epsilon}\bigl\vert\Diff \bigl(\gelement{1,0}{\velocityfield+ \epsilon\velocityfieldother}\bigr)(x)\bigr\vert \Bigl\vert_{\epsilon=0} 
= \bigl\vert\Diff \bigl(\gelement{1,0}{\velocityfield}\bigr)(x) \bigr\vert 
   \bigl(\Div (h) \circ \gelement{1,0}{\velocityfield} \bigr)(x).
\end{equation}

The chain rule now implies that 
\begin{equation}\label{eq:taylor_expansion4}
  \frac{d}{d \epsilon} \Bigl( \template \circ \gelement{1,0}{\velocityfield+ \epsilon\velocityfieldother}(x) \Bigr) \Bigl\vert_{\epsilon=0} = 
  \Bigl\langle 
    \grad \template\bigl(\gelement{1,0}{\velocityfield}(x)\bigr),
    ( h \circ\gelement{1,0}{\velocityfield})(x)
  \Bigr\rangle_{\Real^n}.
\end{equation}
Hence
\begin{align*}
\GDiff \DeforOpX(\velocityfield, \template)(\velocityfieldother)(x) 
&= \bigl\vert\Diff \bigl(\gelement{1,0}{\velocityfield}\bigr)(x)\bigr\vert 
     \bigl( \Div (h) \circ \gelement{1,0}{\velocityfield}(\Cdot) \template(\Cdot) \circ \gelement{1,0}{\velocityfield} \bigr)(x) \\
&\quad + \bigl\vert\Diff \bigl(\gelement{1,0}{\velocityfield}\bigr)(x)\bigr\vert 
  \Bigl\langle 
    \grad \template\bigl(\gelement{1,0}{\velocityfield}(x)\bigr),
    ( h \circ\gelement{1,0}{\velocityfield})(x)
  \Bigr\rangle_{\Real^n}
\\
&= \bigl\vert\Diff \bigl(\gelement{1,0}{\velocityfield}\bigr)(x)\bigr\vert 
    \Div\bigl(\template(\Cdot) h(\Cdot)\bigr) \circ \gelement{1,0}{\velocityfield} (x).
\end{align*}
This proves \cref{eq:DiffDeforOpMP}.
\end{proof}

\subsection{Registration functionals in the abstract setting}\label{sec:RegFunc}
The aim is to compute the derivative and gradient of a \emph{registration functional}
$\MatchingFunctionalV_{\template} \colon \FlowSpace{2} \to \Real$ of the form in \cref{eq:OptimV}, \ie, 
\begin{equation}\label{eq:MatchFunc}
   \MatchingFunctionalV_{\template}(\velocityfield) := \MatchingFunctionalX  \circ \DeforOpX(\velocityfield, \template)
   \quad\text{for $\velocityfield \in \FlowSpace{2}$}  
\end{equation}  
where $\MatchingFunctionalX  \colon \RecSpace \to \Real$ is G\^ateaux differentiable, and $\DeforOpX$ is given by \cref{eq:DeforOpX}. 

By the chain rule one immediately obtains an expression for the G\^ateaux derivative of the registration functional $\MatchingFunctionalV_{\template}$ in \cref{eq:MatchFunc}:
\begin{equation}\label{eq:ChainRuleEq}
  \GDiff \MatchingFunctionalV_{\template}(\velocityfield)(\velocityfieldother) = 
      \GDiff \MatchingFunctionalX \bigl( \DeforOpX(\velocityfield, \template) \bigr) 
      \bigl( \GDiff \DeforOpX(\velocityfield, \template)(\velocityfieldother) \bigr)
    \quad\text{for $\velocityfield, \velocityfieldother \in \FlowSpace{2}$.}
\end{equation}

Furthermore, the expression for its $\FlowSpace{2}$--gradient is derived as
\begin{equation}\label{eq:MatchFuncGradientRel2}
  \bigl\langle \grad \MatchingFunctionalV_{\template}(\velocityfield), \velocityfieldother \bigr\rangle_{\FlowSpace{2}}
       = \Bigl\langle \grad \MatchingFunctionalX \bigl( \DeforOpX(\velocityfield, \template) \bigr), 
          \GDiff \DeforOpX(\velocityfield, \template)(\velocityfieldother) \Bigr\rangle_{\RecSpace}.
\end{equation}
In the above, $\GDiff \DeforOpX(\velocityfield, \template) \colon \FlowSpace{2} \to \RecSpace$ is a linear map that has a $\FlowSpace{2}$--adjoint 
$\GDiff \DeforOpX(\velocityfield, \template)^{\ast} \colon \RecSpace \to \FlowSpace{2}$, so 
\begin{equation}\label{eq:MatchFuncGradientRel3}
  \bigl\langle \grad \MatchingFunctionalV_{\template}(\velocityfield), \velocityfieldother \bigr\rangle_{\FlowSpace{2}}
       = \Bigl\langle \GDiff \DeforOpX(\velocityfield, \template)^{\ast}\bigl[ \grad \MatchingFunctionalX \bigl( \DeforOpX(\velocityfield, \template) \bigr], \velocityfieldother \Bigr\rangle_{\FlowSpace{2}}.
\end{equation}
Hence, the $\FlowSpace{2}$--gradient of $\MatchingFunctionalV_{\template}$ in \cref{eq:MatchFunc} is given by  
\begin{equation}\label{eq:MatchFuncGradientRel4}
  \grad \MatchingFunctionalV_{\template}(\velocityfield)
       = \GDiff \DeforOpX(\velocityfield, \template)^{\ast}\bigl[ \grad \MatchingFunctionalX \bigl( \DeforOpX(\velocityfield, \template) \bigr].
\end{equation}

More explicit expressions requires choosing a specific group action and Hilbert space structures on $\RecSpace$ and $\LieAlgebra$ (for gradient expressions). This is done in the following sections.

\subsubsection{Geometric group action}
The deformation operator is here given by the geometric group action in \cref{eq:DeforOpGeom}. Furthermore, consider the case when $\RecSpace$ has the $\LpSpace^2$--Hilbert space structure and $\LieAlgebra$ is a \ac{RKHS}. Under these conditions, it is possible to 
provide explicit expressions for the derivative and gradient of $\MatchingFunctionalV_{\template}$.
\begin{theorem}\label{thm:DiffMatchFuncGeom}
Let the registration functional $\MatchingFunctionalV_{\template} \colon \FlowSpace{2} \to \Real$ be given as in \cref{eq:MatchFunc} where $\template \in \RecSpace$ is differentiable and $\MatchingFunctionalX  \colon \RecSpace \to \Real$ is G\^ateaux differentiable on $\RecSpace$.
Furthermore, assume $\DiffeoGroup$ acts on $\RecSpace$ by means of the (left) geometric group action \cref{eq:Geometric}. Then, the G\^ateaux derivative of $\MatchingFunctionalV_{\template}$ is given as \cref{eq:ChainRuleEq}, where $\GDiff \DeforOpX(\velocityfield, \template)(\velocityfieldother) \colon \domain \to \Real$ is given as \cref{eq:DiffDeforOpGeom}. 

Furthermore, if $\RecSpace \subset \LpSpace^2(\domain, \Real)$ and $\LieAlgebra$ is a \ac{RKHS} with a reproducing kernel represented by a symmetric and positive definite function $\kernel \colon \domain \times \domain \to \Matrix_{+}^{n \times n}$ (see \cref{sec:RKHS}), then 
the $\FlowSpace{2}$--gradient is 
\begin{equation}\label{eq:GradMatchingFunctionalVRHKS}
  \grad \MatchingFunctionalV_{\template}(\velocityfield)(t,x) = 
      -  \int_{\domain} 
              \bigl\vert \Diff(\gelement{t,1}{\velocityfield})(y) \bigr\vert \grad\MatchingFunctionalX \bigl( \DeforOpX(\velocityfield, \template) \bigr)\bigl( \gelement{t,1}{\velocityfield}(y) \bigr)
              \kernel(x,y) \cdot 
              \grad(\template \circ \gelement{t,0}{\velocityfield})(y) 
              \dint y
\end{equation}    
for $x \in \domain$ and $0\leq t \leq 1$.
\end{theorem}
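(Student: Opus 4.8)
The plan is to reduce the statement to two facts already established: the formula for the derivative of the deformation operator under the geometric action (\cref{thm:DiffDeforOpGeom}), and the identity \cref{eq:VIP} relating the $\LpSpace^2$-inner product on vector fields to the $\LieAlgebra$-inner product of the admissible \ac{RKHS}. The claim about the G\^ateaux derivative is then essentially immediate: \cref{thm:DiffDeforOpGeom} provides that $\DeforOpX(\Cdot,\template)$ is G\^ateaux differentiable with derivative \cref{eq:DiffDeforOpGeom}, so composing with the G\^ateaux differentiable $\MatchingFunctionalX$ and invoking the chain rule gives \cref{eq:ChainRuleEq}. The only point worth a remark is that the chain rule is applicable because $\epsilon\mapsto\DeforOpX(\velocityfield+\epsilon\velocityfieldother,\template)$ is a differentiable curve in $\RecSpace$ along which $\MatchingFunctionalX$ is differentiable.

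For the $\FlowSpace{2}$-gradient I would start from the defining relation \cref{eq:MatchFuncGradientRel2}. Abbreviating $m:=\grad\MatchingFunctionalX\bigl(\DeforOpX(\velocityfield,\template)\bigr)\in\RecSpace$ and using that $\RecSpace$ carries the $\LpSpace^2(\domain,\Real)$-structure, this reads $\bigl\langle\grad\MatchingFunctionalV_{\template}(\velocityfield),\velocityfieldother\bigr\rangle_{\FlowSpace{2}}=\int_{\domain}m(x)\,\GDiff\DeforOpX(\velocityfield,\template)(\velocityfieldother)(x)\dint x$ for every $\velocityfieldother\in\FlowSpace{2}$. I would then substitute the expression \cref{eq:DiffDeforOpGeom} for $\GDiff\DeforOpX(\velocityfield,\template)(\velocityfieldother)$ and interchange the order of the time and space integrals; Fubini is legitimate because $\template$ is differentiable on the bounded $\domain$, the flow maps are $\Smooth^1$, $m\in\LpSpace^2$, and $\int_0^1\Vert\velocityfieldother(t,\Cdot)\Vert_{\infty}\dint t<\infty$ by admissibility. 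This produces an integral over $[0,1]\times\domain$ of $-\,m(x)\bigl\langle\grad(\template\circ\gelement{t,0}{\velocityfield})\bigl(\gelement{1,t}{\velocityfield}(x)\bigr),\,\velocityfieldother\bigl(t,\gelement{1,t}{\velocityfield}(x)\bigr)\bigr\rangle_{\Real^n}$.

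Next, for each fixed $t$ I would change variables $y=\gelement{1,t}{\velocityfield}(x)$, \ie $x=\gelement{t,1}{\velocityfield}(y)$ by \cref{eq:phiRelations}, with Jacobian factor $\bigl\vert\Diff(\gelement{t,1}{\velocityfield})(y)\bigr\vert$. Using $\gelement{1,t}{\velocityfield}\circ\gelement{t,1}{\velocityfield}=\Id$, this rewrites the pairing as $\int_0^1\bigl\langle w(t,\Cdot),\velocityfieldother(t,\Cdot)\bigr\rangle_{\LpSpace^2(\domain,\Real^n)}\dint t$ with $w(t,y):=-\bigl\vert\Diff(\gelement{t,1}{\velocityfield})(y)\bigr\vert\,m\bigl(\gelement{t,1}{\velocityfield}(y)\bigr)\,\grad(\template\circ\gelement{t,0}{\velocityfield})(y)$, and one checks $w(t,\Cdot)\in\LpSpace^2(\domain,\Real^n)$ for a.e.\ $t$. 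Now comes the decisive step: since $\LieAlgebra$ is an admissible \ac{RKHS} continuously embedded in $\LpSpace^2(\domain,\Real^n)$, \cref{eq:VIP} applied at each $t$ turns $\bigl\langle w(t,\Cdot),\velocityfieldother(t,\Cdot)\bigr\rangle_{\LpSpace^2}$ into $\bigl\langle\velocityfieldother(t,\Cdot),\int_{\domain}\Koperator(\Cdot,y)\bigl(w(t,y)\bigr)\dint y\bigr\rangle_{\LieAlgebra}$. Integrating over $t$ and recognising the definition of the $\FlowSpace{2}$-inner product, the arbitrariness of $\velocityfieldother$ forces $\grad\MatchingFunctionalV_{\template}(\velocityfield)(t,x)=\int_{\domain}\kernel(x,y)\,w(t,y)\dint y$; inserting $w$ and pulling the scalars $\bigl\vert\Diff(\gelement{t,1}{\velocityfield})(y)\bigr\vert$ and $m\bigl(\gelement{t,1}{\velocityfield}(y)\bigr)$ outside the matrix-vector product $\kernel(x,y)\,\grad(\template\circ\gelement{t,0}{\velocityfield})(y)$ yields exactly \cref{eq:GradMatchingFunctionalVRHKS}.

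The genuinely non-formal ingredient is this last passage from the ambient $\LpSpace^2$-representation of the gradient to the intrinsic $\FlowSpace{2}$-representation demanded by the optimisation \cref{eq:OptimV}: it is precisely \cref{eq:VIP}, hence the \ac{RKHS} structure, that makes the kernel $\kernel$ appear and that converts an ``$\LpSpace^2$-gradient'' into the correct $\FlowSpace{2}$-gradient. The remaining ingredients --- the Fubini interchange and the diffeomorphic change of variables --- are routine once admissibility is invoked, since it gives uniform-in-time Lipschitz control of the velocity fields and hence $\Smooth^1$ flow maps on the bounded domain $\domain$. The one bookkeeping point worth verifying carefully is that $w(t,\Cdot)\in\LpSpace^2(\domain,\Real^n)$ for a.e.\ $t$, which follows from differentiability of $\template$ with bounded gradient, boundedness of the Jacobians, and $m\in\LpSpace^2$.
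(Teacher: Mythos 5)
Your proposal is correct and follows essentially the same route as the paper's proof: the chain rule combined with \cref{thm:DiffDeforOpGeom} for the derivative, then substitution of \cref{eq:DiffDeforOpGeom} into \cref{eq:MatchFuncGradientRel2}, the change of variables $y=\gelement{1,t}{\velocityfield}(x)$ producing the Jacobian factor, and finally \cref{eq:VIP} to pass from the $\LpSpace^2$-pairing to the $\LieAlgebra$-pairing and read off the kernel representation. Your added remarks on Fubini and on the square-integrability of the intermediate vector field are justifications the paper leaves implicit, but the argument is the same.
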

\begin{proof}
The deformation operator $\DeforOpX(\Cdot,\template)$ is given by \cref{eq:DeforOpGeom} for geometric group action and its derivative is given by \cref{eq:DiffDeforOpGeom} in \cref{thm:DiffDeforOpGeom}. The gives the statement. To derive the expression in \cref{eq:GradMatchingFunctionalVRHKS} for the gradient, one needs to use the assumption that $\RecSpace \subset \LpSpace^2(\domain, \Real)$. An expression for $\GDiff \DeforOpX(\velocityfield, \template)(\velocityfieldother)$ is given by \cref{eq:DiffDeforOpGeom}, which then can be inserted into \cref{eq:MatchFuncGradientRel2}:
\begin{align*}
\GDiff \MatchingFunctionalV_{\template}(\velocityfield)(\velocityfieldother)  
  &= \biggl\langle 
    \grad\MatchingFunctionalX \bigl( \DeforOpX(\velocityfield, \template) \bigr)(\Cdot), 
   - \int_{0}^{1} 
      \Bigl\langle 
         \grad (\template \circ \gelement{t,0}{\velocityfield})\bigl( \gelement{1,t}{\velocityfield}(\Cdot) \bigr),
         \velocityfieldother\bigl(t,\gelement{1,t}{\velocityfield}(\Cdot)\bigr)  
      \Bigr\rangle_{\Real^n}\dint t
    \biggr\rangle_{\!\!\LpSpace^2(\domain,\Real)}     
\\
  &= - \int_{0}^1 \biggl\langle 
      \grad\MatchingFunctionalX \bigl( \DeforOpX(\velocityfield, \template) \bigr)(\Cdot) 
        \grad (\template \circ \gelement{t,0}{\velocityfield})\bigl( \gelement{1,t}{\velocityfield}(\Cdot) \bigr),  
      \velocityfieldother\bigl(t,\gelement{1,t}{\velocityfield}(\Cdot)\bigr)  
    \biggr\rangle_{\!\!\LpSpace^2(\domain,\Real^n)}\dint t.
\end{align*}   
The last equality makes use of the fact that the inner product in $\LpSpace^2(\domain,\Real^n)$ (square 
integrable $\Real^n$-valued functions) is expressible as the inner product in $\Real^n$ followed by the 
inner product in $\LpSpace^2(\domain,\Real)$ (square integrable real valued functions).  Note also
that
\begin{align*}
  x &\mapsto 
  \grad\MatchingFunctionalX \bigl( \DeforOpX(\velocityfield, \template) \bigr)(x) 
        \grad (\template \circ \gelement{t,0}{\velocityfield})\bigl( \gelement{1,t}{\velocityfield}(x) \bigr),  \\
  x &\mapsto  \velocityfieldother\bigl(t,\gelement{1,t}{\velocityfield}(x)\bigr)
\end{align*}
are both mappings from $\domain$ into $\Real^n$, so the integrand in the last expression, which is given 
by the $\LpSpace^2(\domain,\Real^n)$ inner product, is well defined. 
 
Now, introduce the variable $y := \gelement{1,t}{\velocityfield}(x)$, so $x = \gelement{t,1}{\velocityfield}(y)$ by \cref{eq:phiRelations}, 
and use the fact that the inner product is symmetric. This gives
\begin{displaymath}
\GDiff \MatchingFunctionalV_{\template}(\velocityfield)(\velocityfieldother) = 
  - \int_{0}^1 \Bigl\langle \velocityfieldother(t,\Cdot), 
    \bigl\vert \Diff(\gelement{t,1}{\velocityfield})(\Cdot) \bigr\vert 
    \grad\MatchingFunctionalX \bigl( \DeforOpX(\velocityfield, \template) \bigr)\bigl( \gelement{t,1}{\velocityfield}(\Cdot) \bigr) 
    \grad(\template \circ \gelement{t,0}{\velocityfield})(\Cdot) 
    \Bigr\rangle_{\LpSpace^2(\domain,\Real^n)} \dint t.   
\end{displaymath}
Finally, $\LieAlgebra$ is a \ac{RKHS} with reproducing kernel $\Koperator \colon \domain \times \domain \to \LinearSpace(\Real^n,\Real^n)$, so \cref{eq:VIP} implies that 
\begin{displaymath}  
\GDiff \MatchingFunctionalV_{\template}(\velocityfield)(\velocityfieldother) 
   = - \int_0^1 \biggl\langle \velocityfieldother(t,\Cdot), \int_{\domain} \Koperator(\Cdot, x)\bigl( \tilde{\velocityfield}(t,x) 
   \bigr)\dint x \biggr\rangle_{\!\!\LieAlgebra} \dint t
\end{displaymath}
where $\tilde{\velocityfield}(t, \Cdot) \colon \domain \to \Real^n$ is defined as 
\begin{displaymath}   
\tilde{\velocityfield}(t,x) := 
   \bigl\vert \Diff(\gelement{t,1}{\velocityfield})(x) \bigr\vert 
    \grad\MatchingFunctionalX \bigl( \DeforOpX(\velocityfield, \template) \bigr)\bigl( \gelement{t,1}{\velocityfield}(x) \bigr) 
    \grad(\template \circ \gelement{t,0}{\velocityfield})(x).
 \end{displaymath}
It is now possible to read off the expression for the $\FlowSpace{2}$--gradient of $\MatchingFunctionalV_{\template}$:
\begin{displaymath}  
    \grad \MatchingFunctionalV_{\template}(\velocityfield)(t,x) 
      = -  \int_{\domain} \Koperator(x,y)\bigl( \tilde{\velocityfield}(t,y) \bigr) \dint y,
\end{displaymath}
and inserting the matrix valued function $\kernel \colon \domain \times \domain \to \Matrix_{+}^{n \times n}$ 
representing the reproducing kernel yields \cref{eq:GradMatchingFunctionalVRHKS}. This concludes the proof of \cref{thm:DiffMatchFuncGeom}.
\end{proof}

\subsubsection{Mass-preserving group action}
The deformation operator is here given by the mass-preserving group action in \cref{eq:DeforOpMP}. Furthermore, consider the case when 
$\RecSpace$ has the $\LpSpace^2$--Hilbert space structure and $\LieAlgebra$ is a \ac{RKHS}. Under these conditions, it is possible to 
provide explicit expressions for the derivative and gradient of $\MatchingFunctionalV_{\template}$.
\begin{theorem}\label{thm:DiffMatchFuncMP}
Let the assumptions in \cref{thm:DiffMatchFuncGeom} hold with the (left) mass-preserving group action \cref{eq:MassPreserving} instead of the geometric one. Then, the G\^ateaux derivative of $\MatchingFunctionalV_{\template}$ is given as in \cref{eq:ChainRuleEq} with $\GDiff \DeforOpX(\velocityfield, \template)(\velocityfieldother) \colon \domain \to \Real$ is given as \cref{eq:DiffDeforOpMP}.
Furthermore, the corresponding $\FlowSpace{2}$--gradient is  
  \begin{equation}\label{eq:GradMatchingFunctionalVRHKS_mp}
    \grad \MatchingFunctionalV_{\template}(\velocityfield)(t,x) = 
      \int_{\domain}             
           \bigl\vert\Diff(\gelement{t,0}{\velocityfield})(y)\bigr\vert (\template \circ \gelement{t,0}{\velocityfield})(y) \kernel(x,y) 
           \cdot
           \grad\Bigl( \grad\MatchingFunctionalX \bigl( \DeforOpX(\velocityfield, \template) \bigr)\circ \gelement{t,1}{\velocityfield}\Bigr)(y)
            \dint y
  \end{equation}    
  for $x \in \domain$ and $0\leq t \leq 1$.
\end{theorem}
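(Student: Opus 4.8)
The plan is to reproduce the architecture of the proof of \cref{thm:DiffMatchFuncGeom}: the first assertion (the chain-rule form of $\GDiff\MatchingFunctionalV_{\template}$) is immediate once $\GDiff\DeforOpX(\velocityfield,\template)$ is inserted from \cref{eq:DiffDeforOpMP}, so the work is confined to the gradient formula \cref{eq:GradMatchingFunctionalVRHKS_mp}. Starting from the gradient identity \cref{eq:MatchFuncGradientRel2} with $\RecSpace\subset\LpSpace^2(\domain,\Real)$, and abbreviating $\psi:=\grad\MatchingFunctionalX\bigl(\DeforOpX(\velocityfield,\template)\bigr)$, substitution of \cref{eq:DiffDeforOpMP} gives
\[
 \GDiff\MatchingFunctionalV_{\template}(\velocityfield)(\velocityfieldother)
  = \int_{\domain} \psi(x)\,\bigl\vert\Diff(\gelement{1,0}{\velocityfield})(x)\bigr\vert\,\Div\bigl(\template(\Cdot)h(\Cdot)\bigr)\bigl(\gelement{1,0}{\velocityfield}(x)\bigr)\dint x ,
\]
with $h$ as in \cref{eq:h_simplify}. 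The first manoeuvre is the change of variables $y:=\gelement{1,0}{\velocityfield}(x)$, so $x=\gelement{0,1}{\velocityfield}(y)$ by \cref{eq:phiRelations}; its Jacobian cancels the factor $\bigl\vert\Diff(\gelement{1,0}{\velocityfield})\bigr\vert$ exactly, because $\bigl\vert\Diff(\gelement{1,0}{\velocityfield})(\gelement{0,1}{\velocityfield}(y))\bigr\vert\,\bigl\vert\Diff(\gelement{0,1}{\velocityfield})(y)\bigr\vert=1$. One is left with $\int_{\domain}(\psi\circ\gelement{0,1}{\velocityfield})(y)\,\Div\bigl(\template h\bigr)(y)\dint y$.

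The genuinely new ingredient, absent from the geometric case, is an integration by parts in space: since $\template h$ is an $\Real^n$-valued field built from $\velocityfieldother(t,\Cdot)\in\LieAlgebra\subset\Smooth^1_0(\domain,\Real^n)$ (which kills boundary terms) and a differentiable $\template$, one obtains $-\int_{\domain}\template(y)\,\bigl\langle\grad(\psi\circ\gelement{0,1}{\velocityfield})(y),h(y)\bigr\rangle_{\Real^n}\dint y$. Next I would plug in the explicit $h$ from \cref{eq:h_simplify}, apply Fubini to bring the $t$-integral outside, and perform the inner change of variables $z:=\gelement{0,t}{\velocityfield}(y)$, whose Jacobian reintroduces the factor $\bigl\vert\Diff(\gelement{t,0}{\velocityfield})(z)\bigr\vert$. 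Transposing the matrix $\Diff(\gelement{t,0}{\velocityfield})(z)$ onto the other slot of the $\Real^n$ inner product and combining its adjoint with $\grad(\psi\circ\gelement{0,1}{\velocityfield})$ via the chain rule yields $\grad\bigl(\psi\circ\gelement{0,1}{\velocityfield}\circ\gelement{t,0}{\velocityfield}\bigr)(z)=\grad\bigl(\psi\circ\gelement{t,1}{\velocityfield}\bigr)(z)$, the last equality again by \cref{eq:phiRelations}. The two minus signs — one from the integration by parts, one from the definition of $h$ — cancel, so
\[
 \GDiff\MatchingFunctionalV_{\template}(\velocityfield)(\velocityfieldother)
  = \int_0^1\Bigl\langle\velocityfieldother(t,\Cdot),\;\bigl\vert\Diff(\gelement{t,0}{\velocityfield})(\Cdot)\bigr\vert(\template\circ\gelement{t,0}{\velocityfield})(\Cdot)\,\grad\bigl(\psi\circ\gelement{t,1}{\velocityfield}\bigr)(\Cdot)\Bigr\rangle_{\LpSpace^2(\domain,\Real^n)}\dint t .
\]
Applying \cref{eq:VIP} slotwise rewrites the $\LpSpace^2$ pairing as an $\LieAlgebra$ pairing against $\int_{\domain}\Koperator(\Cdot,y)\bigl(\tilde{\velocityfield}(t,y)\bigr)\dint y$, where $\tilde{\velocityfield}(t,\Cdot)$ is the bracketed integrand; reading off the $\FlowSpace{2}$-gradient and inserting the matrix kernel $\kernel$ gives \cref{eq:GradMatchingFunctionalVRHKS_mp}.

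I expect the main obstacle to be bookkeeping rather than anything conceptual: tracking the two nested changes of variables so that the surviving Jacobian factor is precisely $\bigl\vert\Diff(\gelement{t,0}{\velocityfield})\bigr\vert$ and nothing more, and keeping the sign straight through the integration by parts (it is the cancellation of the two minus signs that turns the $-$ of \cref{eq:GradMatchingFunctionalVRHKS} into the $+$ of \cref{eq:GradMatchingFunctionalVRHKS_mp}). The only analytic point needing a word of justification is the vanishing of the boundary term in the integration by parts, which follows from $\velocityfieldother(t,\Cdot)\in\LieAlgebra$ together with the $\Smooth^1_0(\domain,\Real^n)$ embedding and the assumed differentiability of $\template$ (the $\SBV\cap\LpSpace^\infty$ case being handled as in \cite[Chapter~2]{Vi09}).
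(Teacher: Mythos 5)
Your proposal is correct and follows essentially the same route as the paper's proof: insert \cref{eq:DiffDeforOpMP} into \cref{eq:MatchFuncGradientRel2}, manipulate the resulting $\LpSpace^2$ pairing into the form $\int_0^1\langle\velocityfieldother(t,\Cdot),\,\vert\Diff(\gelement{t,0}{\velocityfield})\vert\,(\template\circ\gelement{t,0}{\velocityfield})\,\grad(\grad\MatchingFunctionalX(\DeforOpX(\velocityfield,\template))\circ\gelement{t,1}{\velocityfield})\rangle_{\LpSpace^2}\dint t$, and read off the gradient via \cref{eq:VIP}. The paper compresses the two changes of variables, the integration by parts, and the chain-rule/transposition step into a single equality, so your write-up in fact supplies exactly the bookkeeping (including the sign cancellation and the vanishing boundary term from $\LieAlgebra\subset\Smooth^1_0(\domain,\Real^n)$) that the published proof leaves implicit.
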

\begin{proof}
Under the mass-preserving group action, the deformation operator $\DeforOpX(\Cdot,\template)$ is given by \cref{eq:DeforOpMP} and its derivative is calculated by \cref{eq:DiffDeforOpMP} in \cref{thm:DiffDeforOpMP}. By chain rule, the first statement is immediately proved.  

To prove \cref{eq:GradMatchingFunctionalVRHKS_mp}, start by inserting the expression for $\GDiff \DeforOpX(\velocityfield, \template)(\velocityfieldother)$ into \cref{eq:MatchFuncGradientRel2}:
\begin{align*}
  \GDiff \MatchingFunctionalV_{\template}(\velocityfield)(\velocityfieldother)
     &= \int_{0}^{1}\biggl\langle \grad \MatchingFunctionalX \bigl( \DeforOpX(\velocityfield, \template) \bigr), 
 - \bigl\vert\Diff \bigl(\gelement{1,0}{\velocityfield}\bigr)\bigr\vert \Div \biggl(\Diff\bigl( \gelement{t,0}{\velocityfield} \bigr)\bigl( \gelement{0,t}{\velocityfield} \bigr)  \Bigl( \velocityfieldother\bigl(t, \gelement{0,t}{\velocityfield} \bigr) \Bigr) \template\biggr) \circ \gelement{1,0}{\velocityfield}\biggr\rangle_{\!\!\LpSpace^2(\domain,\Real)}\!\!\! \dint t \\
   &= \biggl\langle \int_{0}^{1} \bigl\vert\Diff\bigl(\gelement{t,0}{\velocityfield}\bigr)\bigr\vert \template \circ \gelement{t,0}{\velocityfield} \grad\Bigl(\grad \MatchingFunctionalX \bigl( \DeforOpX(\velocityfield, \template) \bigr)\circ \gelement{t,1}{\velocityfield}\Bigr) \dint t , \velocityfieldother\bigl(t, \Cdot \bigr) \biggr\rangle_{\!\!\LpSpace^2(\domain,\Real^n)}.
\end{align*}
Now, \cref{eq:GradMatchingFunctionalVRHKS_mp} follows from combining the above with \cref{eq:VIP} and reading off the gradient term.
\end{proof}

\subsection{$\LpSpace^{2}$-based registration functionals}\label{sec:L2regFunc}
Consider \cref{eq:InvProb} where $\RecSpace \subset \LpSpace^{2}(\domain,\Real)$ and 
$\DataSpace \subset \LpSpace^{2}(\datadomain,\Real)$ are Hilbert spaces 
endowed with the $\LpSpace^{2}$--Hilbert space structure. Here, $\datadomain$ denotes 
a smooth manifold providing coordinates for data. Finally, assume that $\ForwardOp$ is
G\^ateaux differentiable and $\noisedata$ is independent of $\ForwardOp(\truesignal)$ 
and Gaussian. Given these assumptions, the natural registration functional is 
\begin{equation}\label{eq:MatchingFuncL2Indirect}
 \MatchingFunctionalX (\signal) := \bigl\Vert \ForwardOp(\signal) - \data \bigr\Vert_{\DataSpace}^2
   = \int_{\datadomain} \bigl( \ForwardOp(\signal)(y) - \data(y) \bigr)^2 \dint y
 \quad\text{for $\signal \in \RecSpace$,}
\end{equation}
for given data $\data\in \DataSpace$. The corresponding registration functional $\MatchingFunctionalV_{\template} \colon \FlowSpace{2} \to \Real$ is then 
\begin{equation}\label{eq:EnergyImageMatchingIndirect}
  \MatchingFunctionalV_{\template}(\velocityfield) := 
       \Bigl\Vert 
         \ForwardOp\bigl( \DeforOpX(\velocityfield, \template) \bigr) - \data 
       \Bigr\Vert_{\LpSpace^{2}(\datadomain,\Real)}^2.
\end{equation}
Note also that the G\^ateaux derivative of $\MatchingFunctionalX $ and its corresponding gradient are given by  
\begin{equation}\label{eq:Diff:MatchingFuncL2Indirect}
\begin{split}
 \GDiff \MatchingFunctionalX (\signal)(\signalother) 
   &= 2 \Bigl\langle 
          \GDiff \ForwardOp(\signal)^{\ast}\bigl( \ForwardOp(\signal)-\data \bigr),
    \signalother
  \Bigr\rangle_{\RecSpace} \\
 \grad \MatchingFunctionalX (\signal) 
   &= 2 \GDiff \ForwardOp(\signal)^{\ast}\bigl( \ForwardOp(\signal)-\data \bigr)
\end{split}
\qquad \text{for $\signal,\signalother \in \RecSpace$.} 
\end{equation}
Here, ``$\ast$'' denotes the Hilbert space adjoint and $\GDiff \ForwardOp$ is the G\^ateaux derivative of 
$\ForwardOp$. When $\ForwardOp$ is linear, then $\GDiff \ForwardOp(\signal)=\ForwardOp$
in \cref{eq:Diff:MatchingFuncL2Indirect}.
Next, compute the G\^ateaux derivative of $\MatchingFunctionalV_{\template}$ and its corresponding gradient under the two group actions.

\subsubsection{Geometric group action}
Here, the registration functional is given by \cref{eq:EnergyImageMatchingIndirect} with the deformation operator given as in 
\cref{eq:DeforOpGeom}, \ie, 
\begin{equation}\label{eq:L2IndirectGeom}
  \MatchingFunctionalV_{\template}(\velocityfield) 
       = \int_{\datadomain} \Bigr( \ForwardOp\bigl(\template \circ \gelement{1,0}{\velocityfield}\bigr)(x) - \data(x) \Bigr)^2\dint x.
\end{equation}
\begin{corollary}\label{cor:DiffL2IndirectGeom}
Let the assumptions in \cref{thm:DiffMatchFuncGeom} hold and let
$\MatchingFunctionalV_{\template} \colon \FlowSpace{2} \to \Real$ be given as in \cref{eq:L2IndirectGeom}
with $\ForwardOp \colon \RecSpace \to \DataSpace$ G\^ateaux differentiable.
Then
\begin{equation}\label{eq:DiffL2IndirectGeom}
    \GDiff \MatchingFunctionalV_{\template}(\velocityfield)(\velocityfieldother)  
       = 2 \Bigl\langle  
             \GDiff\ForwardOp\bigl( \DeforOpX(\velocityfield, \template) \bigr)^{\ast}\bigl( \ForwardOp\bigl( \DeforOpX(\velocityfield, \template) - \data \bigr),
             \GDiff \DeforOpX(\velocityfield, \template)(\velocityfieldother)
           \Bigr\rangle_{\LpSpace^2(\domain,\Real)}
\quad\text{for $\velocityfield, \velocityfieldother \in \FlowSpace{2}$,}
\end{equation}  
 where $\GDiff \DeforOpX(\velocityfield, \template)(\velocityfieldother) \colon \domain \to \Real$ is given as \cref{eq:DiffDeforOpGeom}. 

Furthermore, if $\RecSpace \subset \LpSpace^2(\domain, \Real)$ and 
$\LieAlgebra$ is a \ac{RKHS} with a reproducing kernel represented by the 
symmetric and positive definite function $\kernel \colon \domain \times \domain \to \Matrix_{+}^{n \times n}$
(see \cref{sec:RKHS}), then the corresponding $\FlowSpace{2}$--gradient is  
\begin{multline}\label{eq:GradMatchingFunctionalVImageMatchingIndirectRHKS} 
    \grad \MatchingFunctionalV_{\template}(\velocityfield)(t,x) \\ 
    = -  \int_{\domain} \bigl\vert \Diff(\gelement{t,1}{\velocityfield})(y) \bigr\vert 
              \GDiff\ForwardOp\bigl( \DeforOpX(\velocityfield, \template) \bigr)^{\ast}\bigl( \ForwardOp\bigl( \DeforOpX(\velocityfield, \template) - \data \bigr)\bigl( \gelement{t,1}{\velocityfield}(y) \bigr) 
         \kernel(x,y) \cdot 
              \grad(\template \circ \gelement{t,0}{\velocityfield})(y) 
             \dint y  
\end{multline}
for $x \in \domain$ and $0 \leq t \leq 1$.    
\end{corollary}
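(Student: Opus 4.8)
The plan is to deduce \cref{cor:DiffL2IndirectGeom} by specializing \cref{thm:DiffMatchFuncGeom} to the concrete $\LpSpace^2$--registration functional $\MatchingFunctionalX$ defined in \cref{eq:MatchingFuncL2Indirect}. First I would check that the hypotheses of \cref{thm:DiffMatchFuncGeom} are in force: the template $\template \in \RecSpace$ is differentiable by assumption, the group $\DiffeoGroup$ acts through the geometric action \cref{eq:Geometric}, and $\MatchingFunctionalX(\signal) = \Vert \ForwardOp(\signal) - \data \Vert_{\DataSpace}^2$ is G\^ateaux differentiable on $\RecSpace$ since $\ForwardOp$ is assumed G\^ateaux differentiable and the squared $\DataSpace$--norm is smooth; its G\^ateaux derivative and $\RecSpace$--gradient are exactly the ones recorded in \cref{eq:Diff:MatchingFuncL2Indirect}, namely $\GDiff \MatchingFunctionalX(\signal)(\signalother) = 2\langle \GDiff \ForwardOp(\signal)^{\ast}(\ForwardOp(\signal) - \data), \signalother \rangle_{\RecSpace}$ and $\grad \MatchingFunctionalX(\signal) = 2\,\GDiff \ForwardOp(\signal)^{\ast}(\ForwardOp(\signal) - \data)$. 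I would also recall that $\DeforOpX(\Cdot,\template) \colon \FlowSpace{2} \to \RecSpace$ is G\^ateaux differentiable by \cref{thm:DiffDeforOpGeom} (admissibility of $\LieAlgebra$ together with differentiability of $\template$), so that the composition $\MatchingFunctionalV_{\template} = \MatchingFunctionalX \circ \DeforOpX(\Cdot,\template)$ is G\^ateaux differentiable and the chain rule is legitimate.

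With these identifications the derivative formula \cref{eq:DiffL2IndirectGeom} is immediate. The chain rule \cref{eq:ChainRuleEq} gives $\GDiff \MatchingFunctionalV_{\template}(\velocityfield)(\velocityfieldother) = \GDiff \MatchingFunctionalX\bigl(\DeforOpX(\velocityfield,\template)\bigr)\bigl(\GDiff \DeforOpX(\velocityfield,\template)(\velocityfieldother)\bigr)$, and substituting the first line of \cref{eq:Diff:MatchingFuncL2Indirect} evaluated at $\signal = \DeforOpX(\velocityfield,\template)$, together with the expression \cref{eq:DiffDeforOpGeom} for $\GDiff \DeforOpX(\velocityfield,\template)(\velocityfieldother)$ supplied by \cref{thm:DiffDeforOpGeom}, yields precisely \cref{eq:DiffL2IndirectGeom}, including the factor $2$ and the adjoint $\GDiff \ForwardOp(\DeforOpX(\velocityfield,\template))^{\ast}$.

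For the $\FlowSpace{2}$--gradient \cref{eq:GradMatchingFunctionalVImageMatchingIndirectRHKS} I would simply insert $\grad \MatchingFunctionalX\bigl(\DeforOpX(\velocityfield,\template)\bigr) = 2\,\GDiff \ForwardOp\bigl(\DeforOpX(\velocityfield,\template)\bigr)^{\ast}\bigl(\ForwardOp(\DeforOpX(\velocityfield,\template)) - \data\bigr)$ into the gradient formula \cref{eq:GradMatchingFunctionalVRHKS} of \cref{thm:DiffMatchFuncGeom}. The additional hypotheses needed for that formula, $\RecSpace \subset \LpSpace^2(\domain,\Real)$ and $\LieAlgebra$ an admissible \ac{RKHS} with reproducing kernel represented by the symmetric positive definite $\kernel \colon \domain \times \domain \to \Matrix_{+}^{n \times n}$, are exactly the extra assumptions imposed in the second half of the corollary; one checks in passing that $\grad \MatchingFunctionalX(\DeforOpX(\velocityfield,\template)) \in \RecSpace \subset \LpSpace^2(\domain,\Real)$, which is what legitimizes the \ac{RKHS} identity \cref{eq:VIP} used inside the proof of \cref{thm:DiffMatchFuncGeom}. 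The only point deserving a little care, and the nearest thing to an obstacle, is the bookkeeping of which inner product each gradient refers to: the intermediate gradient $\grad \MatchingFunctionalX$ is taken with respect to the $\LpSpace^2$ structure on $\RecSpace$, whereas the output $\grad \MatchingFunctionalV_{\template}$ lives in $\FlowSpace{2}$ with the Hilbert structure inherited from the \ac{RKHS} $\LieAlgebra$. This change of inner products is precisely the one already performed within \cref{thm:DiffMatchFuncGeom}, so no genuinely new estimate is required and the corollary follows by direct substitution.
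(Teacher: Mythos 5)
Your proposal is correct and follows essentially the same route as the paper's own proof: both obtain \cref{eq:DiffL2IndirectGeom} by substituting the expression for $\GDiff \MatchingFunctionalX$ from \cref{eq:Diff:MatchingFuncL2Indirect} into the chain rule \cref{eq:ChainRuleEq}, and \cref{eq:GradMatchingFunctionalVImageMatchingIndirectRHKS} by substituting $\grad \MatchingFunctionalX$ into \cref{eq:GradMatchingFunctionalVRHKS}. Your additional remarks on verifying the hypotheses of \cref{thm:DiffMatchFuncGeom} and on tracking which inner product each gradient refers to are sound but do not change the argument.
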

\begin{proof}
The proof of \cref{eq:DiffL2IndirectGeom} follows directly from inserting the expression for 
$\GDiff \MatchingFunctionalX $ in \cref{eq:Diff:MatchingFuncL2Indirect} into \cref{eq:ChainRuleEq}. Similarly, 
\cref{eq:GradMatchingFunctionalVImageMatchingIndirectRHKS} follows directly from inserting the expression for 
$\grad \MatchingFunctionalX $ in \cref{eq:Diff:MatchingFuncL2Indirect} into \cref{eq:GradMatchingFunctionalVRHKS}.
\end{proof}

\subsubsection{Mass-preserving group action}
The registration functional is here given by \cref{eq:EnergyImageMatchingIndirect} with the deformation operator given as in 
\cref{eq:DeforOpMP}, \ie, 
\begin{equation}\label{eq:L2IndirectMP}
  \MatchingFunctionalV_{\template}(\velocityfield) 
       =  \int_{\datadomain} \Bigr( \ForwardOp\Bigl(\bigl\vert \Diff (\gelement{1,0}{\velocityfield}) \bigr\vert\,  \template \circ \gelement{1,0}{\velocityfield}\Bigr)(x) - \data(x) \Bigr)^2\dint x.
\end{equation}

\begin{corollary}\label{cor:DiffL2IndirectMP}
Let the assumptions in \cref{cor:DiffL2IndirectGeom} hold but with mass-preserving group action instead of the geometric one.  Then, $\velocityfieldother \mapsto \GDiff \MatchingFunctionalV_{\template}(\velocityfield)(\velocityfieldother)$ is given as in \cref{eq:DiffL2IndirectGeom} but with $\GDiff \DeforOpX(\velocityfield, \template)(\velocityfieldother) \colon \domain \to \Real$ is given as \cref{eq:DiffDeforOpMP}.
Furthermore, the corresponding $\FlowSpace{2}$--gradient is  
\begin{multline}\label{eq:GradMatchingFunctionalVImageMatchingIndirectRHKS_mp}
    \grad \MatchingFunctionalV_{\template}(\velocityfield)(t,x) 
       = 2\int_{\domain} 
           \Bigl\vert\Diff\bigl(\gelement{t,0}{\velocityfield}\bigr)(y)\Bigr\vert \, 
           (\template \circ \gelement{t,0}{\velocityfield})(y) \kernel(x,y) 
           \cdot \\
           \grad\Bigl( \Bigl[\GDiff\ForwardOp\bigl( \DeforOpX(\velocityfield, \template) \bigr)^{\ast}\Bigl( \ForwardOp\bigl( \DeforOpX(\velocityfield, \template) \bigr)- \data \Bigr) \Bigr]\circ \gelement{t,1}{\velocityfield}(y)\Bigr) \dint y    
\end{multline}
for $x \in \domain$ and $0 \leq t \leq 1$.    
\end{corollary}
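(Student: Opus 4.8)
The plan is to proceed exactly as in the proof of \cref{cor:DiffL2IndirectGeom}, only invoking the mass-preserving counterparts \cref{thm:DiffDeforOpMP} and \cref{thm:DiffMatchFuncMP} in place of \cref{thm:DiffDeforOpGeom} and \cref{thm:DiffMatchFuncGeom}. The registration functional still factors as $\MatchingFunctionalV_{\template} = \MatchingFunctionalX \circ \DeforOpX(\Cdot,\template)$, with $\MatchingFunctionalX$ the $\LpSpace^{2}$-discrepancy in \cref{eq:MatchingFuncL2Indirect} and $\DeforOpX$ now the mass-preserving deformation operator in \cref{eq:DeforOpMP}, so the whole argument reduces to two substitutions into identities already proved.

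For the derivative I would apply the chain rule \cref{eq:ChainRuleEq}. Since $\ForwardOp$ is G\^ateaux differentiable and $\noisedata$ is Gaussian and independent of $\ForwardOp(\truesignal)$, the functional $\MatchingFunctionalX$ is G\^ateaux differentiable with derivative the first line of \cref{eq:Diff:MatchingFuncL2Indirect}; inserting this into \cref{eq:ChainRuleEq} and using that $\GDiff \DeforOpX(\velocityfield,\template)(\velocityfieldother)$ is given by \cref{eq:DiffDeforOpMP} (valid because $\template$ is differentiable, by \cref{thm:DiffDeforOpMP}) gives precisely \cref{eq:DiffL2IndirectGeom} with the mass-preserving $\GDiff \DeforOpX$. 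This part is purely formal, essentially verbatim the geometric case.

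For the gradient I would start from \cref{eq:GradMatchingFunctionalVRHKS_mp} of \cref{thm:DiffMatchFuncMP}, which already expresses $\grad \MatchingFunctionalV_{\template}(\velocityfield)(t,x)$ as a reproducing-kernel integral involving the spatial gradient of $\grad \MatchingFunctionalX ( \DeforOpX(\velocityfield, \template))\circ \gelement{t,1}{\velocityfield}$. Substituting the explicit $\LpSpace^{2}$-gradient from the second line of \cref{eq:Diff:MatchingFuncL2Indirect}, namely $\grad \MatchingFunctionalX (\signal) = 2\, \GDiff \ForwardOp(\signal)^{\ast}( \ForwardOp(\signal)- \data)$ evaluated at $\signal = \DeforOpX(\velocityfield,\template)$, and carrying the factor $2$ through the integral, produces \cref{eq:GradMatchingFunctionalVImageMatchingIndirectRHKS_mp} directly; no re-derivation of the kernel manipulation (the passage through \cref{eq:VIP}) is needed, since it is already incorporated in \cref{eq:GradMatchingFunctionalVRHKS_mp}. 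When $\ForwardOp$ is linear one may additionally replace $\GDiff \ForwardOp(\signal)$ by $\ForwardOp$, as noted after \cref{eq:Diff:MatchingFuncL2Indirect}.

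The main obstacle I anticipate is not the algebra but the interior regularity needed for the spatial gradient in \cref{eq:GradMatchingFunctionalVImageMatchingIndirectRHKS_mp} to be meaningful: unlike the geometric case, the mass-preserving formula differentiates the pulled-back residual adjoint $[ \GDiff \ForwardOp( \DeforOpX(\velocityfield, \template))^{\ast}( \ForwardOp( \DeforOpX(\velocityfield, \template))- \data)]\circ \gelement{t,1}{\velocityfield}$. One therefore must assume, or verify from the mapping properties of $\ForwardOp$, that $\GDiff \ForwardOp(\signal)^{\ast}$ takes values in a space of (weakly) differentiable functions on $\domain$ and that $x\mapsto \gelement{t,1}{\velocityfield}(x)$ together with its Jacobian is $\Smooth^{1}$ in space, so that the chain rule used inside the proof of \cref{thm:DiffMatchFuncMP} applies. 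For the Gaussian kernel and the ray transform of \cref{sec:2DCT} this holds, but it should be stated as a standing hypothesis inherited from \cref{thm:DiffMatchFuncMP}; with that in place the corollary follows from the two substitutions above, mirroring the proof of \cref{cor:DiffL2IndirectGeom}.
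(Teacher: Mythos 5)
Your proposal matches the paper's own proof, which simply states that the result is analogous to \cref{cor:DiffL2IndirectGeom} and is obtained by inserting the expressions from \cref{eq:Diff:MatchingFuncL2Indirect} into \cref{eq:ChainRuleEq} and \cref{eq:GradMatchingFunctionalVRHKS_mp}; your two substitutions are exactly that argument. The extra regularity caveat you raise is reasonable but goes beyond what the paper records.
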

The proof is analogous to the proof of \cref{cor:DiffL2IndirectGeom}.

\subsection{The shape functional}\label{sec:ShapeFuncGrad}
Define $\Lambda \colon \FlowSpace{2} \to \Real_+$ as 
$\Lambda(\velocityfield) := \gamma \, \LieGroupMetric(\Id,\gelement{0,1}{\velocityfield})^2$.
Then, \cref{eq:ExactShapeFunc,eq:ShapeRegOnOrbit,eq:DeforOpX}, yields 
\begin{equation}\label{eq:DefMetricOp}  
  \Lambda(\velocityfield) = \gamma \LieGroupMetric(\Id,\gelement{0,1}{\velocityfield})^2
  = \gamma \Vert \velocityfield \Vert_{\FlowSpace{2}}^2 \quad\text{for $\velocityfield \in \FlowSpace{2}$.} 
\end{equation}  
The G\^ateaux derivative and associated $\FlowSpace{2}$--gradient of $\Lambda$ is then
\begin{equation}\label{eq:VNormGrad}
\begin{split}
  & \GDiff \Lambda(\velocityfield)(\velocityfieldother) = 2 \gamma \langle \velocityfield, \velocityfieldother \rangle_{\FlowSpace{2}}
   \\  
  &  \grad \Lambda(\velocityfield) = 2 \gamma \velocityfield 
\end{split}    
    \qquad\text{for $\velocityfield, \velocityfieldother \in \FlowSpace{2}$.}
\end{equation}

\subsection{Gradient of the objective functional}\label{sec:GradObjFunc}
The goal here is to provide expressions for the gradient of objective functional in \cref{eq:GenericGoalFunctionalV} for the two group actions, geometric and mass-preserving.

\subsubsection{Geometric group action}
The following generalization of \cite[Theorem~16.2]{GrMi07} provides an explicit expression for the gradient of the objective functional in \cref{eq:GenericGoalFunctionalV}.
\begin{corollary}\label{corr:GoalFunctionalGradient}
Let the assumptions in \cref{thm:DiffMatchFuncGeom} hold. 
Then, the $\FlowSpace{2}$--gradient of $\velocityfield \mapsto \ObjectiveV (\velocityfield)$ in \cref{eq:GenericGoalFunctionalV} is
\begin{equation}\label{eq:GradGoalFunctionalVRHKS}
    \grad \ObjectiveV (\velocityfield)(t,x) 
    =  2 \gamma \velocityfield(t,x)  
      -  \int_{\domain} \bigl\vert \Diff(\gelement{t,1}{\velocityfield})(y) \bigr\vert
              \grad\MatchingFunctionalX \bigl( \DeforOpX(\velocityfield, \template) \bigr)\bigl( \gelement{t,1}{\velocityfield}(y) \bigr)\kernel(x,y) \cdot 
              \grad(\template \circ \gelement{t,0}{\velocityfield})(y) 
              \dint y
\end{equation}
for $x \in \domain$ and $0\leq t \leq 1$.
\end{corollary}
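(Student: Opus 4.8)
The plan is to exploit the additive structure of the objective functional together with the linearity of G\^ateaux differentiation, so that the gradient decomposes into two pieces, each of which has already been computed earlier. By \cref{eq:DefMetricOp}, the first term $\gamma\,\LieGroupMetric(\Id,\gelement{0,1}{\velocityfield})^2$ appearing in \cref{eq:GenericGoalFunctionalV} equals $\Lambda(\velocityfield)=\gamma\Vert\velocityfield\Vert_{\FlowSpace{2}}^2$, while the second term is precisely the registration functional $\MatchingFunctionalV_{\template}(\velocityfield)=\MatchingFunctionalX\circ\DeforOpX(\velocityfield,\template)$ of \cref{eq:MatchFunc} with $\MatchingFunctionalX$ as in \cref{eq:GenericGoalFunctionalV}. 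Thus $\ObjectiveV=\Lambda+\MatchingFunctionalV_{\template}$ on $\FlowSpace{2}$.

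First I would record that both summands are G\^ateaux differentiable on $\FlowSpace{2}$: for $\Lambda$ this is elementary, being a scalar multiple of the squared Hilbert norm, and for $\MatchingFunctionalV_{\template}$ it is exactly the content of \cref{thm:DiffMatchFuncGeom}, whose hypotheses (a differentiable template $\template$, $\MatchingFunctionalX\colon\RecSpace\to\Real$ G\^ateaux differentiable, the geometric group action \cref{eq:Geometric}, $\RecSpace\subset\LpSpace^2(\domain,\Real)$, and $\LieAlgebra$ an admissible \ac{RKHS} whose reproducing kernel is represented by $\kernel$) are assumed in the present statement. Since the G\^ateaux derivative is additive, $\GDiff\ObjectiveV(\velocityfield)(\velocityfieldother)=\GDiff\Lambda(\velocityfield)(\velocityfieldother)+\GDiff\MatchingFunctionalV_{\template}(\velocityfield)(\velocityfieldother)$ for all $\velocityfield,\velocityfieldother\in\FlowSpace{2}$, and because both derivatives are represented by the Riesz theorem relative to the \emph{same} inner product, namely the Hilbert structure of $\FlowSpace{2}$, the associated $\FlowSpace{2}$--gradients are additive as well: $\grad\ObjectiveV(\velocityfield)=\grad\Lambda(\velocityfield)+\grad\MatchingFunctionalV_{\template}(\velocityfield)$.

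The remaining step is a substitution. From \cref{eq:VNormGrad} one has $\grad\Lambda(\velocityfield)=2\gamma\velocityfield$, which contributes the pointwise term $2\gamma\velocityfield(t,x)$; from \cref{eq:GradMatchingFunctionalVRHKS} in \cref{thm:DiffMatchFuncGeom} one has the integral expression for $\grad\MatchingFunctionalV_{\template}(\velocityfield)(t,x)$. Adding these two gives exactly the formula \cref{eq:GradGoalFunctionalVRHKS}, which concludes the proof.

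I do not expect a genuine obstacle here: the corollary is a bookkeeping consequence of \cref{thm:DiffMatchFuncGeom} and \cref{eq:VNormGrad}, generalizing \cite[Theorem~16.2]{GrMi07} to the indirect setting, the only change being that $\MatchingFunctionalX$ now incorporates the forward operator $\ForwardOp$ and data $\data$. The single point that deserves an explicit line is the additivity of the Riesz gradient, i.e.\ that the gradients of the two summands are taken with respect to one and the same inner product on $\FlowSpace{2}$; this holds because $\Lambda$ is defined directly through the $\FlowSpace{2}$--norm and \cref{thm:DiffMatchFuncGeom} computes $\grad\MatchingFunctionalV_{\template}$ in that same Hilbert space.
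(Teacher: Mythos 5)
Your proposal is correct and takes essentially the same route as the paper, which disposes of this corollary in one line as an immediate consequence of \cref{thm:DiffMatchFuncGeom} and \cref{eq:VNormGrad}; your added remark on the additivity of the Riesz gradient with respect to the single $\FlowSpace{2}$ inner product is a sensible explicit justification of that one-liner.
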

This is an immediate consequence of \cref{thm:DiffMatchFuncGeom} and \cref{eq:VNormGrad}.

\subsubsection{Mass-preserving group action}
The following result is the version of \cref{corr:GoalFunctionalGradient} under the mass-preserving group action where the deformation operator is given as in \cref{eq:DeforOpMP}.
\begin{corollary}\label{corr:GoalFunctionalGradient_mp}
Let the assumptions in \cref{thm:DiffMatchFuncMP} hold. 
Then, the $\FlowSpace{2}$--gradient of $\velocityfield \mapsto \ObjectiveV (\velocityfield)$ in \cref{eq:GenericGoalFunctionalV} is
\begin{equation}\label{eq:GradGoalFunctionalVRHKS_mp}
    \grad \ObjectiveV (\velocityfield)(t,x) 
    =  2 \gamma \velocityfield(t,x)  
   + \int_{\domain} 
           \bigl\vert\Diff\bigl(\gelement{t,0}{\velocityfield}\bigr)(y)\bigr\vert \, 
           (\template \circ \gelement{t,0}{\velocityfield})(y) \kernel(x,y) 
           \cdot
           \grad\Bigl( \grad\MatchingFunctionalX \bigl( \DeforOpX(\velocityfield, \template) \bigr)\circ \gelement{t,1}{\velocityfield}\Bigr)(y) \dint y
\end{equation}
for $x \in \domain$ and $0\leq t \leq 1$.
\end{corollary}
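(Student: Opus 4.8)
The plan is to exploit the additive structure of the objective functional $\ObjectiveV$ in \cref{eq:GenericGoalFunctionalV}, which decomposes as $\ObjectiveV(\velocityfield) = \Lambda(\velocityfield) + \MatchingFunctionalV_{\template}(\velocityfield)$, where $\Lambda(\velocityfield) = \gamma \Vert \velocityfield \Vert_{\FlowSpace{2}}^2$ is the shape-functional term of \cref{sec:ShapeFuncGrad} and $\MatchingFunctionalV_{\template} = \MatchingFunctionalX \circ \DeforOpX(\Cdot,\template)$ is the registration functional built from the mass-preserving group action. Since the $\FlowSpace{2}$--inner product is the one inherited from $\LieAlgebra$ via $\Vert \velocityfield \Vert_{\FlowSpace{2}}^2 = \int_0^1 \Vert \velocityfield(t,\Cdot) \Vert_{\LieAlgebra}^2 \dint t$, and the G\^ateaux derivative of a sum is the sum of the G\^ateaux derivatives, the associated gradient is additive as well, so $\grad \ObjectiveV(\velocityfield) = \grad \Lambda(\velocityfield) + \grad \MatchingFunctionalV_{\template}(\velocityfield)$. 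It remains to identify the two summands with the corresponding terms in \cref{eq:GradGoalFunctionalVRHKS_mp}.

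First I would invoke \cref{eq:VNormGrad}, which gives $\grad \Lambda(\velocityfield) = 2\gamma \velocityfield$; evaluated at $(t,x)$ this contributes precisely the first term $2\gamma\velocityfield(t,x)$ in \cref{eq:GradGoalFunctionalVRHKS_mp}. Next I would apply \cref{thm:DiffMatchFuncMP}, whose hypotheses coincide exactly with those assumed in \cref{corr:GoalFunctionalGradient_mp} (differentiable template $\template$, $\MatchingFunctionalX$ G\^ateaux differentiable on $\RecSpace \subset \LpSpace^2(\domain,\Real)$, $\LieAlgebra$ an admissible \ac{RKHS} whose reproducing kernel is represented by a symmetric positive definite $\kernel \colon \domain \times \domain \to \Matrix_+^{n\times n}$, and the mass-preserving group action \cref{eq:MassPreserving}). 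That theorem delivers $\grad \MatchingFunctionalV_{\template}(\velocityfield)(t,x)$ as exactly the integral term appearing in \cref{eq:GradGoalFunctionalVRHKS_mp}. Summing the two contributions yields the claimed formula.

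Since the proof reduces entirely to two results already established, there is essentially no real obstacle; the only points needing a line of care are (i) confirming that $\ObjectiveV$ is G\^ateaux differentiable at $\velocityfield$ so that its $\FlowSpace{2}$--gradient is well defined — the norm term is smooth and the differentiability of $\MatchingFunctionalV_{\template}$ follows from \cref{thm:DiffDeforOpMP} together with the chain rule \cref{eq:ChainRuleEq} — and (ii) checking that the gradient computations of the two summands are carried out with respect to the same Hilbert structure on $\FlowSpace{2}$, which they are by construction. Given these, the statement follows as an immediate consequence of \cref{thm:DiffMatchFuncMP} and \cref{eq:VNormGrad}, in complete analogy with the geometric case in \cref{corr:GoalFunctionalGradient}.
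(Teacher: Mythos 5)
Your proposal is correct and follows exactly the paper's argument: the paper also obtains \cref{eq:GradGoalFunctionalVRHKS_mp} as an immediate consequence of \cref{thm:DiffMatchFuncMP} and \cref{eq:VNormGrad}, using the additive decomposition of $\ObjectiveV$. Your write-up just spells out the linearity-of-the-gradient step and the hypothesis check more explicitly than the paper does.
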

This is an immediate consequence of \cref{thm:DiffMatchFuncMP} and \cref{eq:VNormGrad}.

\section{Numerical implementation}\label{sec:NumericalImpl}
The focus here is to describe the numerical methods used for solving \cref{eq:OptimV}. The corresponding  implementation is available used in this paper is available from \href{https://github.com/chongchenmath/odl_lddmm}{https://github.com/chongchenmath/odl\_lddmm} and it makes use of the Operator Discretization Library (\href{http://github.com/odlgroup/odl}{http://github.com/odlgroup/odl}).

\subsection{Optimization strategies}\label{sec:OptimizationStrategy}
\Cref{sec:Gradient} gives explicit expressions for the $\FlowSpace{2}$--gradient of the objective functional in \cref{eq:OptimV}. Hence, one can use any optimization algorithm that makes use of gradient information, like gradient descent. The numerical implementation of such a gradient descent scheme is outlined in \cref{subsec:GradientDescent}.

An alternative approach is to consider the gradient in the Hamiltonian form, which directly incorporates the dimension reduction that arises from the projection onto a finite dimensional subset of the \ac{RKHS} $\LieAlgebra$, see \cite[section~11.6.2]{Yo10} (in particular algorithm~4 on \cite[p.~275]{Yo10}) for further details. Yet another approach is the shooting method outlined in \cite[section~11.6.4]{Yo10}. Here, one makes use of the characterization of a minimizer based on the momentum geodesic equations. The numerical implementation can make use of the natural finite dimensional counterpart obtained by projecting onto a finite dimensional subset of the \ac{RKHS} $\LieAlgebra$. 

Even though the above approaches may require less iterates than gradient descent, they have an important drawback. They rely on discretizing the vector fields in the \ac{RKHS} $\LieAlgebra$ by control points that do not remain stationary over iterates, so the control points do not necessarily remain on a fixed regular grid. Thus, evaluating the velocity field at some point in time by convolving against a time-dependent kernel function in the \ac{RKHS} cannot easily make use of efficient \ac{FFT} based schemes. This becomes in particular troublesome when the shape space is scalar valued functions defined in 2D/3D. In contrast, the  gradient descent scheme can be formulated so that the \ac{RKHS} kernel used to evaluate the vector fields is given by a fixed set of control points. For this reason, the gradient descent scheme competes favourably regarding computational complexity against the shooting method.

\subsection{Gradient descent}\label{subsec:GradientDescent}
The gradient descent scheme for solving \cref{eq:OptimV} is a first-order iterative optimization algorithm given as 
\begin{equation}\label{eq:gradientflow}
  \velocityfield^{k+1} = \velocityfield^k -  \stepsize\grad \ObjectiveV (\velocityfield^k).
\end{equation}
Here, $\stepsize>0$ is the step-size and $\grad \ObjectiveV $ is the $\FlowSpace{2}$--gradient of the objective functional $\ObjectiveV $ in \cref{eq:GenericGoalFunctionalV}.

\Cref{corr:GoalFunctionalGradient} gives an expression for $\grad \ObjectiveV $ in \cref{eq:GradGoalFunctionalVRHKS} where the reproducing kernel function $\kernel \colon \domain \times \domain \to \Matrix_{+}^{n \times n}$ is evaluated on points that do not move as iterates proceed. Choosing a translation invariant kernel and points on a regular grid in $\domain$, allows one to use computationally efficient \acs{FFT}-based schemes for computing the velocity field at each iterate (see \cite[section~11.7.2]{Yo10}). This is necessary for numerically evaluating $\grad \ObjectiveV (x)$ for $x \in \domain$ in the aforementioned grid and it is computationally more feasible than letting the kernel depend on points that move in time as in the shooting method. 

\paragraph{Computing diffeomorphic deformations}
Evaluating $\velocityfield \mapsto \grad \ObjectiveV (\velocityfield)$ in \cref{eq:gradientflow}  requires computing the diffeomorphic deformations $\gelement{t,0}{\velocityfield}$ and $\gelement{t,1}{\velocityfield}$. Since this is done repeatedly within an iterative scheme, one needs to have an efficient implementation for computing these diffeomorphisms. 

First recall that  $\gelement{s,t}{\velocityfield}$ solves the \ac{ODE} in \cref{eq:FlowEq} forward in time:
\begin{equation}\label{eq:BasicODEInv_0_2}
\begin{cases}
  \partial_t \diffeoother(t,x) = \velocityfield\bigl(t,\diffeoother(t,x)\bigr) & \\
  \diffeoother(s,x) = x & 
 \end{cases}
 \quad\text{for $x\in \domain$ and $0 \leq s, t \leq 1$} 
\end{equation}
with $s$ a fixed time point.
Integrating \cref{eq:BasicODEInv_0_2} \wrt time $t$ gives
\begin{equation}
  \gelement{s,t}{\velocityfield}(x) = x + \int_s^t \velocityfield\bigl(\tau,\gelement{s,\tau}{\velocityfield}(x)\bigr)\dint \tau
  \label{eq:forwarddeformation} 
\quad\text{for $0 \leq t \leq 1$ and $x \in \domain$.}
\end{equation}

A numerical implementation needs to discretize time, which can be done by sub-dividing the time interval $[0,1]$ uniformly into $N$ parts, \ie, 
$t_i = i/N$ for $i = 0, 1, \ldots, N$. Let $s=t_i$, $t=t_{i+1}$ and $t_{i-1}$, the expressions for small deformations $\gelement{t_i,t_{i+1}}{\velocityfield}$ and $\gelement{t_i,t_{i-1}}{\velocityfield}$ would be derived as the approximations
\begin{align}
\label{eq:forwarddeformation_small1}  \gelement{t_i,t_{i+1}}{\velocityfield} &\approx \Id + \frac{1}{N}\velocityfield(t_i, \cdot),\\
\label{eq:forwarddeformation_small2}  \gelement{t_i,t_{i-1}}{\velocityfield} &\approx \Id - \frac{1}{N}\velocityfield(t_i, \cdot).
\end{align}

Furthermore, \cref{eq:phiRelations} implies that $\gelement{t_i,0}{\velocityfield} = \gelement{t_{i-1},0}{\velocityfield} \circ \gelement{t_i,t_{i-1}}{\velocityfield}$, 
which is combined with \cref{eq:forwarddeformation_small2} yields the following approximation:
\begin{equation}\label{eq:ODEflow1b}
\gelement{t_i,0}{\velocityfield} 
  \approx  \gelement{t_{i-1},0}{\velocityfield} \circ \Bigl(\Id - \frac{1}{N}\velocityfield(t_i,\Cdot)\Bigr) 
  \quad\text{for $i = 1, \ldots, N$.}
\end{equation}
Similarly, \cref{eq:phiRelations} implies $\gelement{t_i,1}{\velocityfield} = \gelement{t_{i+1},1}{\velocityfield} \circ \gelement{t_i,t_{i+1}}{\velocityfield}$, 
which is combined with \cref{eq:forwarddeformation_small1} gives the following approximation:
\begin{equation}\label{eq:ODEflow2b}
\gelement{t_i,1}{\velocityfield} \approx  \gelement{t_{i+1},1}{\velocityfield} \circ \Bigl(\Id  + \frac{1}{N} \velocityfield(t_i,\Cdot)\Bigr) \quad\text{for $i = N-1, \ldots, 0$.}
\end{equation}

\begin{remark}\label{re:inverse_linear_deformation}
The approximation used for deriving \cref{eq:ODEflow1b} and \cref{eq:ODEflow2b} from \cref{eq:forwarddeformation_small2} and \cref{eq:forwarddeformation_small1}, respectively implies 
\begin{equation}\label{eq:inverse_relation_linear_deformation}
\Bigl(\Id  + \frac{1}{N} \velocityfield(t_i,\Cdot)\Bigr)^{-1} \approx \Id  - \frac{1}{N} \velocityfield(t_{i+1},\Cdot) \quad\text{for $i = 0, \ldots, N-1$.}
\end{equation}
\end{remark}

\paragraph{Gradient descent algorithm}
The approximations in the aforementioned paragraph play a central role in the gradient descent algorithm that implements \cref{eq:gradientflow} for minimizing for finding a local extrema to \cref{eq:OptimV}. 

The numerical implementation is outlined in \cref{alg:GradientDescentAlgorithm_1} for the case with geometric group action. 
The version for mass-preserving group action can be obtained from \cref{alg:GradientDescentAlgorithm_1} after modifying two steps. 
The first is to replace step~\ref{step:UpdateJacobian} with:
\begin{quote}
Update the Jacobian determinant $\bigl\vert \Diff\gelement{t_i,0}{\velocityfield^k}\bigr\vert$ based on \cref{eq:ODEflow1b}: 
\begin{displaymath}
\bigl\vert \Diff\gelement{t_i,0}{\velocityfield^k}\bigr\vert \gets  \Bigl(1 - \frac{1}{N} \Div\velocityfield^k(t_i,\Cdot)\Bigr)\bigl\vert \Diff\gelement{t_{i-1},0}{\velocityfield^k}\bigr\vert \circ \Bigl(\Id  - \frac{1}{N} \velocityfield^k(t_i,\Cdot)\Bigr)
\end{displaymath}
 for $i = 1, \ldots, N$, where $\bigl\vert \Diff\gelement{t_0,0}{\velocityfield^k}\bigr\vert = \bigl\vert \Diff\gelement{0,0}{\velocityfield^k}\bigr\vert = 1$.
\end{quote}
The second is to replace step~\ref{step:ComputeGradient} with:
\begin{quote}
Compute $\grad \ObjectiveV (\velocityfield^k)$ using \cref{eq:GradGoalFunctionalVRHKS_mp} (use \ac{FFT} based techniques for computing the kernel):
\begin{multline*}
    \grad \ObjectiveV (\velocityfield^k)(t_i,,\Cdot) 
    \gets  2 \gamma \velocityfield^k(t_i,,\Cdot)  \\
      +  \int_{\domain} \bigl\vert \Diff\bigl(\gelement{t_i,0}{\velocityfield^k}\bigr)(y) \bigr\vert \bigl(\template \circ \gelement{t_i,0}{\velocityfield^k}\bigr)(y) \kernel(\Cdot,y) \cdot \grad\Bigl(\grad\MatchingFunctionalX \bigl( \DeforOpX(\velocityfield^k, \template) \bigr)\circ \gelement{t_i,1}{\velocityfield^k}\Bigr)(y)
               \dint y
\end{multline*}
for $i =N, N-1, \ldots, 0$.
\end{quote}
\begin{remark}
A similar gradient descent scheme is proposed in \cite{BeMiTrYo05}. Here, the \ac{RKHS} structure that regulates the smoothness of the velocity fields is defined by a kernel function instead of by a differential operator. This leads to a different implementation. Moreover, the updating strategy in \cite{BeMiTrYo05} that corresponds to in lines 7-9 is: compute $\gelement{t_i,0}{\velocityfield^k}$, $\gelement{t_i,1}{\velocityfield^k}$, and $\Diff\gelement{t_i,1}{\velocityfield^k}$ first and then compute $\template \circ \gelement{t_i,0}{\velocityfield^k}$, $\grad\MatchingFunctionalX \bigl( \DeforOpX(\velocityfield^k, \template) \bigr)\circ \gelement{t_i,1}{\velocityfield^k}$ and $\bigl\vert \Diff\gelement{t_i,1}{\velocityfield^k}\bigr\vert$, respectively. Compared to the updating strategy in  \cref{alg:GradientDescentAlgorithm_1} (lines~7-9), the approach in \cite{BeMiTrYo05} is more costly in terms of computational effort and memory usage. The reason is that in each iteration \cref{alg:GradientDescentAlgorithm_1} just needs to update three scalar fields instead of two vector fields, and one tensor field, and then three scalar fields on the considered domain.
\end{remark}

\begin{algorithm}
\caption{Gradient descent scheme for minimizing $\ObjectiveV $ in \cref{eq:GenericGoalFunctionalV} with geometric group action}
\label{alg:GradientDescentAlgorithm_1}
\begin{algorithmic}[1]
\STATE \emph{Initialize}:
\STATE $k \gets 0$.
\STATE $t_i \gets i/N$ for $i = 0, 1, \ldots, N$.
\STATE $\velocityfield^k(t_i,\Cdot) \gets \velocityfield^0(t_i,\Cdot)$ where $\velocityfield^0(t_i,\Cdot)$ is a given initial vector field.
\STATE Error tolerance $\epsilon > 0$, step size $\stepsize > 0$, and maximum iterations $K > 0$.
\WHILE {$\bigl\vert \grad \ObjectiveV (\velocityfield^k) \bigr\vert > \epsilon$ and $k<K$}
\STATE
Compute $\template \circ \gelement{t_i,0}{\velocityfield^k}$ using \cref{eq:ODEflow1b} by
\begin{displaymath}
\template \circ \gelement{t_i,0}{\velocityfield^k} \gets \bigl(\template \circ \gelement{t_{i-1},0}{\velocityfield^k}\bigr) \circ \Bigl(\Id - \frac{1}{N}\velocityfield^k(t_i,\Cdot)\Bigr)
\end{displaymath}
for $i = 1, \ldots, N$, where $\template \circ \gelement{t_0,0}{\velocityfield^k} = \template \circ \gelement{0,0}{\velocityfield^k} = \template$.
\STATE\label{step:UpdateJacobian} Update the Jacobian determinant $\bigl\vert \Diff\gelement{t_i,1}{\velocityfield^k}\bigr\vert$ based on \cref{eq:ODEflow2b}: 
\begin{displaymath}
\bigl\vert \Diff\gelement{t_i,1}{\velocityfield^k}\bigr\vert \gets  \Bigl(1 + \frac{1}{N} \Div\velocityfield^k(t_i,\Cdot)\Bigr)\bigl\vert \Diff\gelement{t_{i+1},1}{\velocityfield^k}\bigr\vert \circ \Bigl(\Id  + \frac{1}{N} \velocityfield^k(t_i,\Cdot)\Bigr)
\end{displaymath}
 for $i = N-1, \ldots, 0$, where $\bigl\vert \Diff\gelement{t_{N},1}{\velocityfield^k}\bigr\vert = \bigl\vert \Diff\gelement{1,1}{\velocityfield^k}\bigr\vert = 1$.
\STATE Compute $\grad\MatchingFunctionalX \bigl( \DeforOpX(\velocityfield^k, \template) \bigr)\circ \gelement{t_i,1}{\velocityfield^k}$ using \cref{eq:ODEflow2b} by
\begin{displaymath}
\grad\MatchingFunctionalX \bigl( \DeforOpX(\velocityfield^k, \template) \bigr)\circ \gelement{t_i,1}{\velocityfield^k} \gets \Bigl(  \grad\MatchingFunctionalX \bigl( \DeforOpX(\velocityfield^k, \template) \bigr)\circ \gelement{t_{i+1},1}{\velocityfield^k}\Bigl) \circ \Bigl(\Id  + \frac{1}{N} \velocityfield^k(t_i,\Cdot)\Bigr)
\end{displaymath}
for $i = N-1, \ldots, 0$, where 
\begin{displaymath}
 \grad\MatchingFunctionalX \bigl( \DeforOpX(\velocityfield^k, \template) \bigr)\circ \gelement{t_N,1}{\velocityfield^k} = \grad\MatchingFunctionalX \bigl( \DeforOpX(\velocityfield^k, \template) \bigr)\circ \gelement{1,1}{\velocityfield^k} = \grad\MatchingFunctionalX \bigl( \DeforOpX(\velocityfield^k, \template) \bigr). 
 \end{displaymath}
\STATE\label{step:ComputeGradient} Compute $\grad \ObjectiveV (\velocityfield^k)$ using \cref{eq:GradGoalFunctionalVRHKS} (use \ac{FFT} based techniques for computing the kernel):
\begin{multline*}
    \grad \ObjectiveV (\velocityfield^k)(t_i,,\Cdot) 
    \gets  2 \gamma \velocityfield^k(t_i,,\Cdot)  \\
      -  \int_{\domain} 
              \grad\MatchingFunctionalX \bigl( \DeforOpX(\velocityfield^k, \template) \bigr)\bigl( \gelement{t_i,1}{\velocityfield^k}(y) \bigr)\kernel(\Cdot,y) \cdot 
              \grad(\template \circ \gelement{t_i,0}{\velocityfield^k})(y) 
             \bigl\vert \Diff(\gelement{t_i,1}{\velocityfield^k})(y) \bigr\vert \dint y
\end{multline*}
for $i =N, N-1, \ldots, 0$.
\STATE Update $\velocityfield^k$:
\begin{displaymath}
 \velocityfield^k(t_i,\Cdot)  \gets \velocityfield^k(t_i,\Cdot) -  \stepsize \grad \ObjectiveV (\velocityfield^k)(t_i,\Cdot)
\end{displaymath}
for $i =N, N-1, \ldots, 0$.
\STATE $k \gets k+1$.
\ENDWHILE
\RETURN $\template \circ \gelement{t_i,0}{\velocityfield^k}$ for $i = 1, \ldots, N$.
\end{algorithmic}
\end{algorithm}

\paragraph{Complexity analysis} 
The complexity analysis applies to \cref{alg:GradientDescentAlgorithm_1}. Since the main part (each iteration) of the algorithm is located to lines 7-11, the analysis focuses on those lines. 

Suppose that $\domain \subset \Real^2$ and  the size of the image to be reconstructed is $n\times n$. In line 7, one updates $\template \circ \gelement{t_i,0}{\velocityfield^k}$ for $i = 1, \ldots, N$. Moreover, each of them needs to be used to compute the gradient of the objective functional in line 10 at each time point. Hence they need to be stored at hand. Through simple analysis, in this step, the computational cost is $O(n^2N)$ and the space complexity is $O(n^2N)$ . Similarly, for lines 8 and 9, the Jacobian determinant $\bigl\vert \Diff\gelement{t_i,1}{\velocityfield^k}\bigr\vert$ and $\grad\MatchingFunctionalX \bigl( \DeforOpX(\velocityfield^k, \template) \bigr)\circ \gelement{t_i,1}{\velocityfield^k}$ need to be updated and then stored for $i = N-1, \ldots, 0$. Therefore, for each of the two steps, the computational cost is $O(n^2N)$ and the space complexity is $O(n^2N)$. Evidently, lines~10-11 can be combined into one step. Line~10 uses \ac{FFT} to compute the gradient of the objective functional at each time point. Hence the computational cost for this line is $O(Nn^3\log n)$. In line~11, a vector field is updated at each time point and this part requires twice the memory compared to a scalar field on 2D domain. In conclusion, the computational cost is $O(Nn^3\log n)$ and the memory requirements are $O(n^2N)$.

\section{Application to 2D tomography}\label{sec:2DCT}
Indirect registration with geometric group action is here applied to 2D parallel beam tomography with very sparse or highly noisy data. Although this is not a full evaluation, it does still illustrate the performance of indirect registration.

\subsection{The indirect registration problem}
Let $\RecSpace=\SBV(\domain,\Real) \bigcap \LpSpace^{\infty}(\domain,\Real)$ for a fixed image bounded image domain $\domain \subset \Real^2$. Elements in $\RecSpace$ represent 2D images and this space is equipped with an $\LpSpace^{2}$--inner product. Diffeomorphisms act on such elements through a geometric group action and the goal is to register a given template $\template \in \RecSpace$ against a target that observed indirectly as in \cref{eq:InvProb}. Here, $\data \in \DataSpace = \LpSpace^{2}(\datadomain,\Real)$ with $\datadomain$ denoting a fixed manifold of parallel lines in $\Real^2$ (parallel beam data). The template is assumed to be differentiable. 
The forward operator $\ForwardOp \colon \RecSpace \to \DataSpace$ is here the 2D ray/Radon transform, see \cite{NaWu01} for further details.

The indirect registration scheme is given as the solution to \cref{eq:OptimV} where $\MatchingFunctionalX  \colon \RecSpace \to \Real$ in 
\cref{eq:MatchingFunctionalX} is given as 
\begin{displaymath}
\MatchingFunctionalX (\signal) :=  \bigl\Vert \ForwardOp( \signal ) - \data \bigr\Vert_{\LpSpace^{2}(\datadomain,\Real)}^2
    \quad\text{for $\signal \in \RecSpace$.} 
\end{displaymath}
Hence, $\mu=0$ in \cref{eq:MatchingFunctionalX}, \ie, there is no additional regularization and \cref{eq:OptimV} reduces to 
\begin{equation}\label{eq:RegOptim}
   \min_{\velocityfield \in \FlowSpace{2}} \biggl[ \gamma \Vert \velocityfield \Vert_{\FlowSpace{2}}^2    
   + \Bigl\Vert \ForwardOp\bigl( \template \circ \gelement{0,1}{\velocityfield} \bigr) - \data \Bigr\Vert_{\LpSpace^{2}(\datadomain,\Real)}^2
   \biggr].
\end{equation}
In the above, $\gamma >0$ is a fixed regularization parameter that weights the need for minimal deformation against the need to register against the indirectly observed target. 

Next, consider a set $\LieAlgebra$ of vector fields that is a \ac{RKHS} with a reproducing kernel represented by symmetric and positive definite Gaussian function $\kernel \colon \domain \times \domain \to \Matrix_{+}^{2 \times 2}$. Then $\LieAlgebra$ is admissible and is continuously embedded in $\LpSpace^2(\domain,\Real^2)$. The Gaussian is parameterised by $\sigma >0$ as 
\begin{equation}\label{eq:KernelEq}
  \kernel(x,y) := 
      \exp\Bigl(-\dfrac{1}{2 \sigma^2} \Vert x-y \Vert_2 \Bigr)
      \begin{pmatrix} 
          1  & 0 \\
          0  & 1
      \end{pmatrix} 
\quad\text{for $x,y \in \Real^2$.}
\end{equation}
The kernel width $\sigma$ also acts as a regularization parameter.

\subsection{Phantoms and data acquisition protocol}
All images in the shape space $\RecSpace$, such as the target (phantom) and template, are supported in a fixed rectangular image domain $\domain = \lbrack -16, 16 \rbrack \times \lbrack -16, 16 \rbrack$. Data is obtained by first evaluating the 2D parallel beam ray transform on $\datadomain$, which consists of parallel lines whose directions are equally distributed within $[0^\circ, 180^\circ]$. Finally, additive Gaussian white noise at varying noise levels is added to data.

The noise level in data is quantified in terms of the \ac{SNR} expressed using the logarithmic decibel (dB) scale:
\begin{equation}\label{snr}
  \mathrm{SNR}(\data) = 
     10\log_{10} \Biggl[
     \frac{\bigl\Vert \data_{\text{ideal}} - \mu_{\text{ideal}} \bigr\Vert_{\DataSpace}^2}
            {\bigl\Vert \noisedata - \mu_{\text{noise}} \bigr\Vert_{\DataSpace}^2}
     \Biggr]
     \quad\text{for $\data = \data_{\text{ideal}} + \noisedata$.}
\end{equation}
In the above, $\data_{\text{ideal}} := \ForwardOp(\truesignal)$ is the noise-free component of data and $\noisedata \in \DataSpace$ is the noise component. Furthermore, $\mu_{\text{ideal}}$ is the average of $\data^{\text{ideal}}$ and $\mu_{\text{noise}}$ is the 
average of $\noisedata$. Note here that computing the data noise level requires access to the noise-free component of data, which is the case for simulated data.

\subsection{Reconstruction protocol}
Test suites~1 and 2 compares \cref{alg:GradientDescentAlgorithm_1} against usual reconstruction of the target from tomographic data. For the latter, \ac{FBP} and \ac{TV} methods are used. The rationale behind comparing against direct reconstruction is to assess the influence of a priori information contained in \cref{eq:ExactShapeAss}, which states that the unknown target can be obtained by a diffeomorphic deformation of a given template. Hence, to some extent the indirect registration problem can be seen as a shape reconstruction problem. Furthermore, the  corresponding regularization parameters are set separately each method and data noise level as to give the best visual agreement with the target. 

The reconstruction methods \ac{FBP} and \ac{TV} are the ones available in Operator Discretization Library (\href{http://github.com/odlgroup/odl}{http://github.com/odlgroup/odl}). 
\begin{itemize}
\item \ac{FBP} reconstruction is obtained using linear interpolation for the back projection and the Hamming filter at varying frequency scaling.
\item \ac{TV} reconstruction is obtained by solving \cref{eq:VarReg} with $\RegFunc$ as the total variation
functional: 
\begin{displaymath} 
\RegFunc(\signal) := \int_{\domain} \bigl\vert \grad \signal(x) \bigr\vert\dint x. 
\end{displaymath}
The non-smooth minimization \cref{eq:VarReg} is solved using the primal-dual method in \cite{chpo11}.
\end{itemize}

\subsection{Test suites and results}\label{sec:results}
The test suites seek to assess robustness and performance against different kind of objects, sensitivity of indirect registration against choice of regularization parameters $\gamma, \sigma$, and impact of using a template with different topology from target. The targets include both single- and multi-object images.

\paragraph{Test suite 1: Single-object indirect registration}
The aim here is to investigate the performance of indirect registration against highly noisy sparse tomographic data from a single object target. The template and target are chosen similar to the direct registration test in \cite[section~4.2]{BaJoMo15}. 

Images in shape space are digitized using $64 \times 64$ pixels. The parallel beam tomographic data consists of 92 parallel line projections along ten uniformly distributed directions in $[0^\circ, 180^\circ]$.  Hence, in a fully digitized setting, the inverse problem in \cref{eq:InvProb} amounts to solving for $4\,096$ unknowns from $920$ linear equations. Hence, this problem is highly under-determined and data is also highly noisy with a noise level (\ac{SNR}) of 4.87 dB.

When solving the indirect registration problem, the time interval $[0, 1]$ for the flow of diffeomorphisms is sampled uniformly at $N = 20$ time points. The gradient step size is set as $\stepsize = 0.02$ and the regularization parameters are $\gamma = 10^{-7}$ and $\sigma = 6.0$. The gradient descent is stopped after 200 iterations. Finally, the comparison is against direct reconstructions from \ac{FBP} (Hamming filter with frequency scaling $0.4$) and \ac{TV} (1000 iterations, regularization parameter $\mu= 3.0$). 

The results, which are shown in \cref{Test_suite_1:single_object}, show that indirect registration performs fine against a single-object target as long as the template has the topology (\eg, no holes).

\begin{figure}[h]
\centering
   \begin{minipage}[t]{0.32\textwidth}%
     \centering
     \includegraphics[trim=75 25 60 40, clip, width=\textwidth]{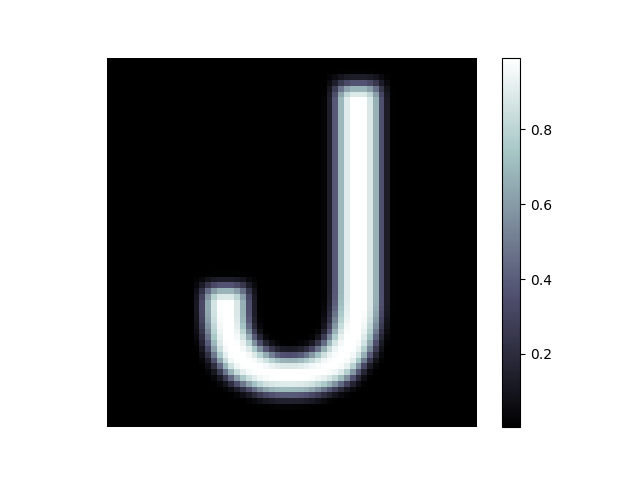}     
     \vskip-0.25\baselineskip
     Template
   \end{minipage}%
   \hfill
   \begin{minipage}[t]{0.32\textwidth}%
     \centering
    \includegraphics[trim=75 25 60 40, clip, width=\textwidth]{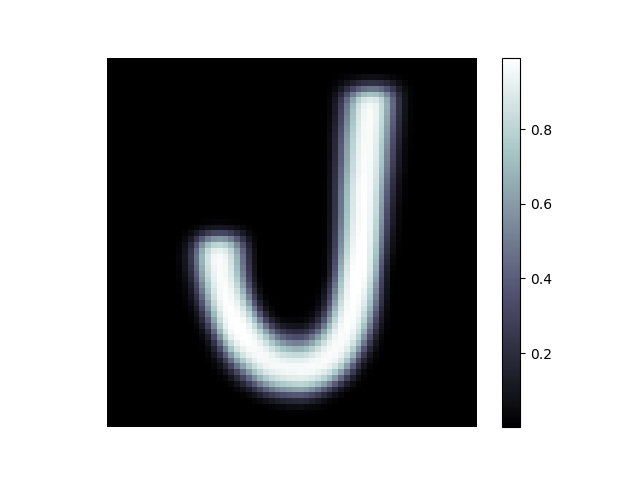}
     \vskip-0.25\baselineskip
     $t=0.25$
   \end{minipage}%
   \hfill
   \begin{minipage}[t]{0.32\textwidth}%
     \centering
     \includegraphics[trim=75 25 60 40, clip, width=\textwidth]{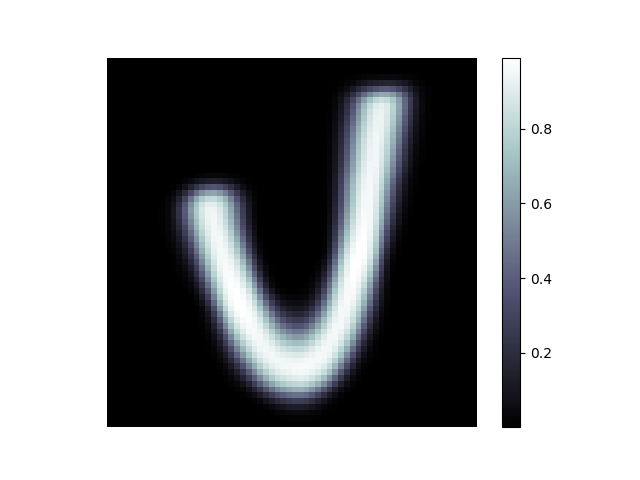}
     \vskip-0.25\baselineskip
     $t=0.5$
   \end{minipage}%
\par\medskip      
   \begin{minipage}[t]{0.32\textwidth}%
     \centering
     \includegraphics[trim=75 25 60 40, clip, width=\textwidth]{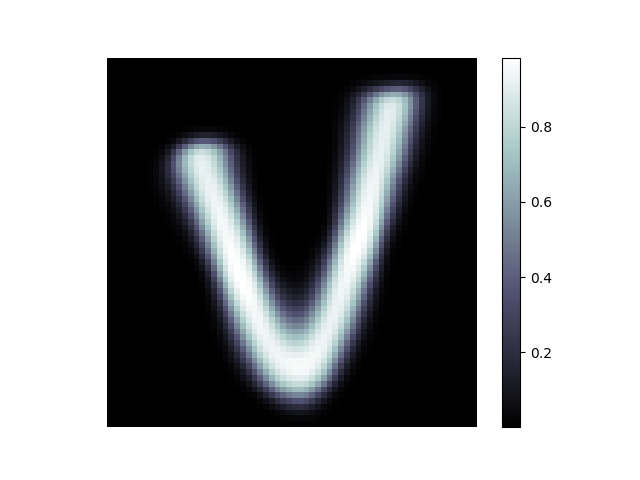}     
     \vskip-0.25\baselineskip
     $t=0.75$
   \end{minipage}%
   \hfill
   \begin{minipage}[t]{0.32\textwidth}%
     \centering
    \includegraphics[trim=75 25 60 40, clip, width=\textwidth]{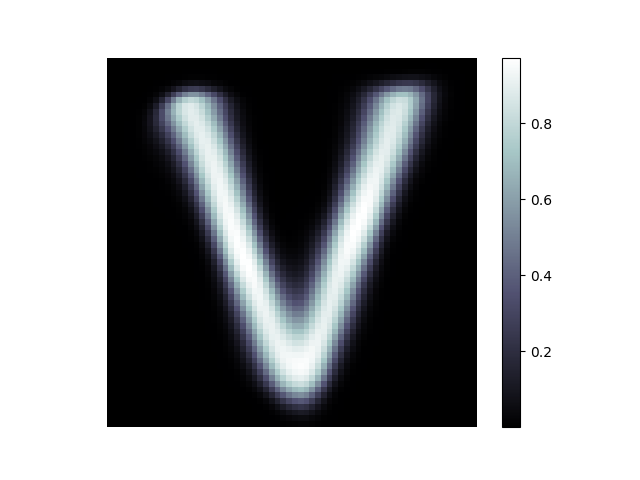}
     \vskip-0.25\baselineskip
     $t=1.0$
   \end{minipage}%
   \hfill
   \begin{minipage}[t]{0.32\textwidth}%
     \centering
     \includegraphics[trim=75 25 60 40, clip, width=\textwidth]{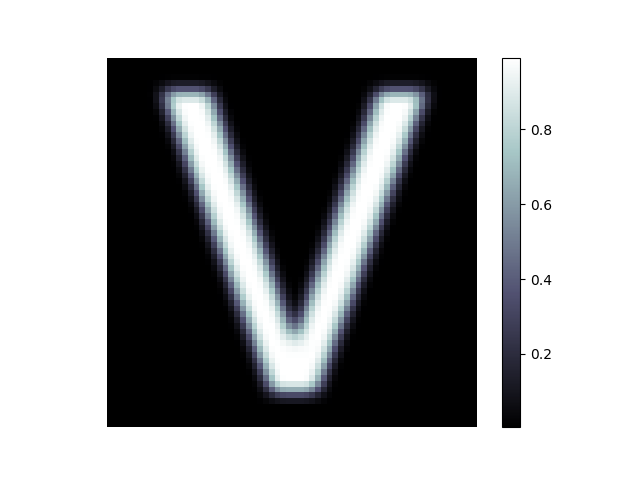}
     \vskip-0.25\baselineskip
     Target
   \end{minipage}%
\par\medskip      
   \begin{minipage}[t]{0.32\textwidth}%
     \centering
     \includegraphics[trim=30 15 30 40, clip, width=\textwidth]{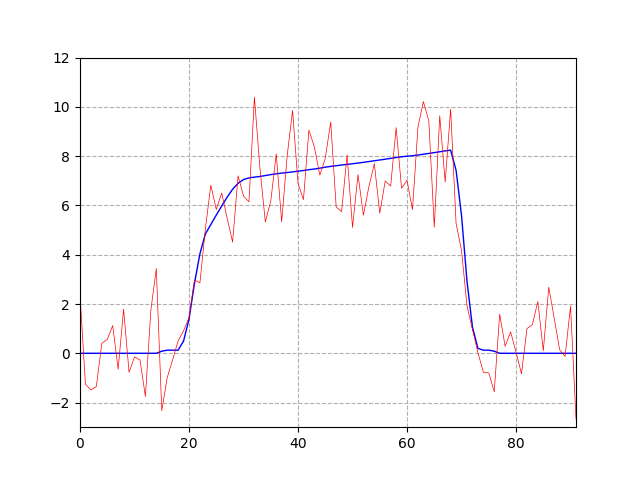}
     \vskip-0.25\baselineskip
     Data at $0^{\circ}$
   \end{minipage}%
   \hfill
   \begin{minipage}[t]{0.32\textwidth}%
     \centering
    \includegraphics[trim=75 25 60 40, clip, width=\textwidth]{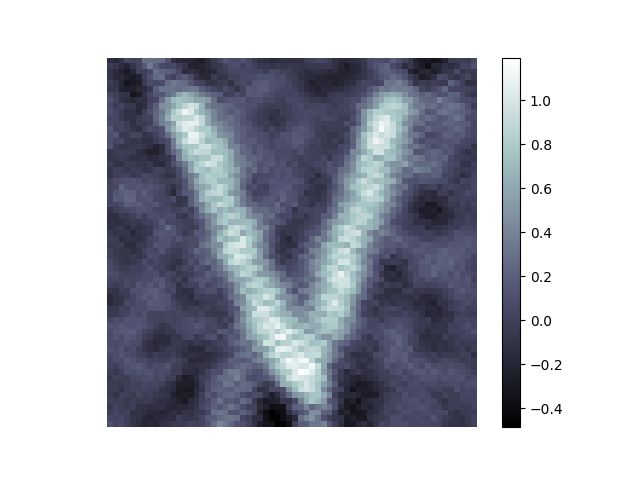}
     \vskip-0.25\baselineskip
     \ac{FBP}
   \end{minipage}%
   \hfill
   \begin{minipage}[t]{0.32\textwidth}%
     \centering
     \includegraphics[trim=75 25 60 40, clip, width=\textwidth]{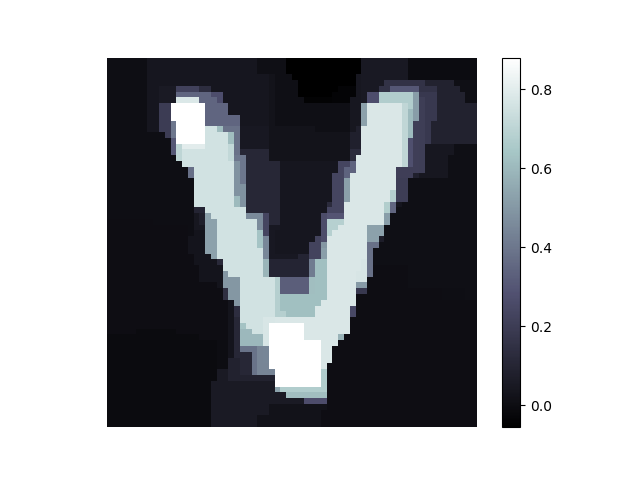}
     \vskip-0.25\baselineskip
     \ac{TV}
   \end{minipage}%
\caption{Test suite 1: Single-object indirect registration. The template (top left) is deformed by a diffeomorphism to match the target (rightmost, second row). Images labeled with time $t$ show the final flow of diffeomorphic deformations obtained from indirect registration with $t=1.0$ denoting the final (indirectly) registered template. Data is highly noisy and under-sampled, bottom leftmost image shows data at angle $0^\circ$ (blue smooth curve is noise-free data, red jagged curve is noisy data). The \ac{FBP} and \ac{TV} reconstructions are for comparison.}
\label{Test_suite_1:single_object}
\end{figure}

\paragraph{Test suite 2: Multi-object indirect registration}
The previous test used a target that only contains one single object. This test considers a multi-object target, namely a target consisting of six separately star-like objects with grey-values in $[0, 1]$, which is digitized using $438 \times 438$ pixels. The choice of target is similar to the direct registration test in \cite{BlPeBaBa15}. 

The data is highly sparse parallel beam tomographic projections taken along only six directions that are uniformly  distributed in $[0, 180^\circ]$. At each direction, one samples the ray transform in a set of uniformly spaced $620$ parallel lines. Hence, the inverse problem in \cref{eq:InvProb} amounts to reconstructing $191\,844$ unknowns from $3\,720$ linear equations. Besides being highly under-determined, data is also highly noisy with noise level (\ac{SNR}) of $4.75$\,dB.

As in test suite~1, the time interval $[0, 1]$ for the flow of diffeomorphisms is sampled uniformly at $N = 20$ time points, the gradient step size is set to $\stepsize = 0.04$, and the regularization parameter is $\gamma = 10^{-7}$. On the other hand, the other regularization parameter regulating the width of the \ac{RKHS} kernel is set as $\sigma = 2.0$ and the gradient descent was stopped after $200$ iterations. Finally, comparison is against direct reconstructions from \ac{FBP} (Hamming filter with frequency scaling $0.4$) and \ac{TV} (1000 iterations, regularization parameter $\mu= 1.0$). 

The results, which are shown in \cref{Test_suite_2:multi_object}, show that indirect registration performs fine also against a multi-object target as long as the template has the same number of objects.

\begin{figure}[h]
\centering
   \begin{minipage}[t]{0.32\textwidth}%
     \centering
     \includegraphics[trim=75 25 60 40, clip, width=\textwidth]{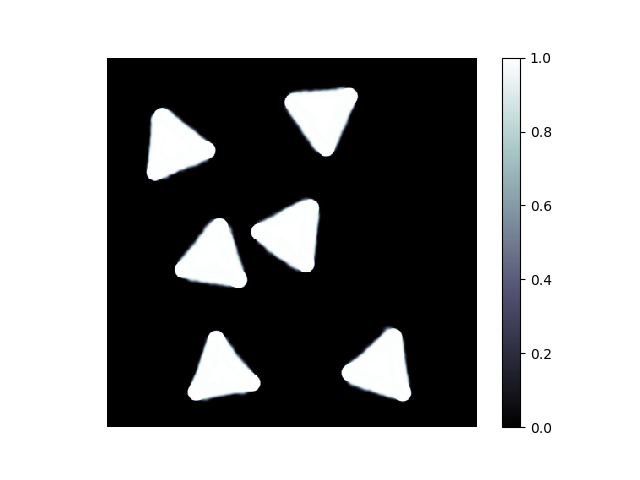}
     \vskip-0.25\baselineskip
     Template
   \end{minipage}%
   \hfill
   \begin{minipage}[t]{0.32\textwidth}%
     \centering
    \includegraphics[trim=75 25 60 40, clip, width=\textwidth]{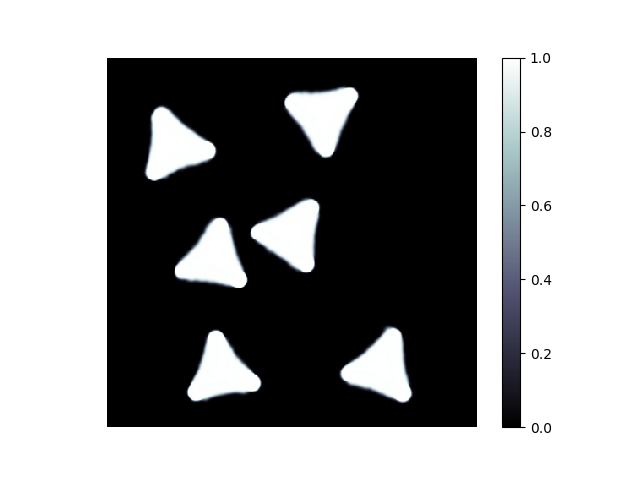}
     \vskip-0.25\baselineskip
     $t=0.25$
   \end{minipage}%
   \hfill
   \begin{minipage}[t]{0.32\textwidth}%
     \centering
     \includegraphics[trim=75 25 60 40, clip, width=\textwidth]{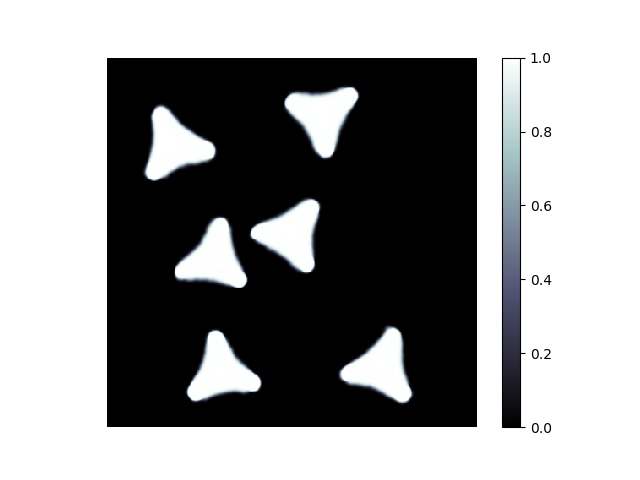}
     \vskip-0.25\baselineskip
     $t=0.5$
   \end{minipage}%
\par\medskip      
   \begin{minipage}[t]{0.32\textwidth}%
     \centering
     \includegraphics[trim=75 25 60 40, clip, width=\textwidth]{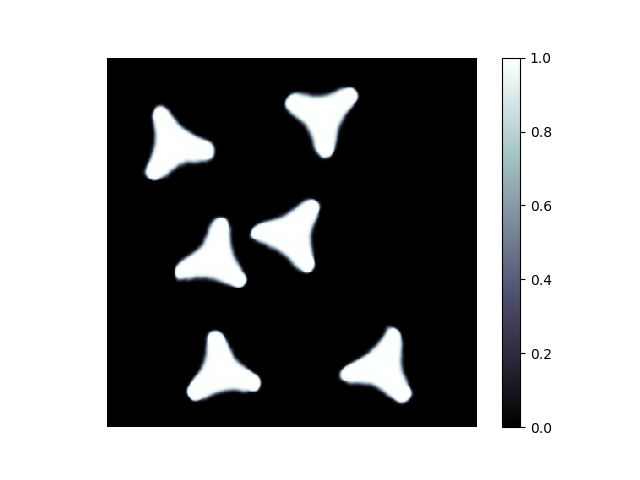}     
     \vskip-0.25\baselineskip
     $t=0.75$
   \end{minipage}%
   \hfill
   \begin{minipage}[t]{0.32\textwidth}%
     \centering
    \includegraphics[trim=75 25 60 40, clip, width=\textwidth]{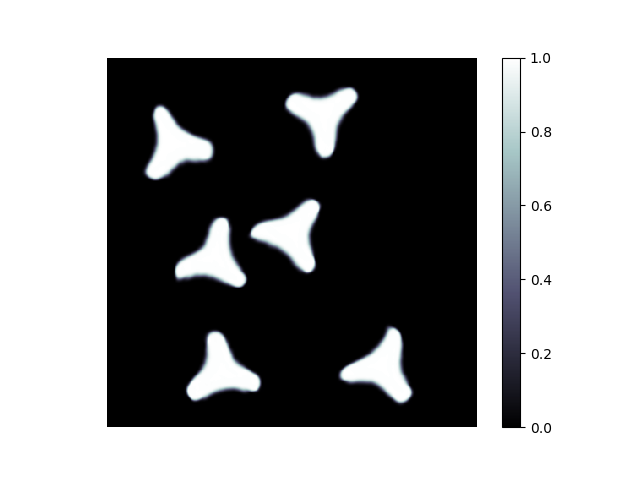}
     \vskip-0.25\baselineskip
     $t=1.0$
   \end{minipage}%
   \hfill
   \begin{minipage}[t]{0.32\textwidth}%
     \centering
     \includegraphics[trim=75 25 60 40, clip, width=\textwidth]{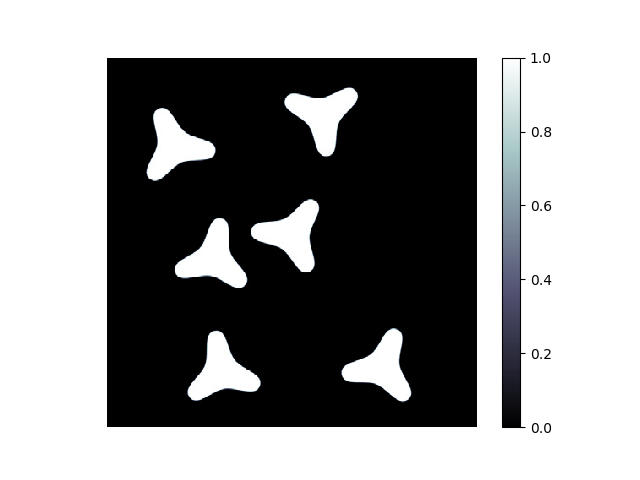}
     \vskip-0.25\baselineskip
     Target
   \end{minipage}%
\par\medskip      
   \begin{minipage}[t]{0.32\textwidth}%
     \centering
     \includegraphics[trim=30 15 30 40, clip, width=\textwidth]{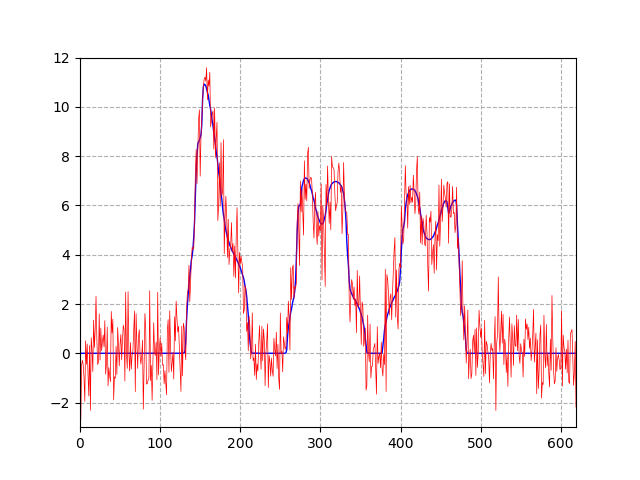}
     \vskip-0.25\baselineskip
     Data at $0^{\circ}$
   \end{minipage}%
   \hfill
   \begin{minipage}[t]{0.32\textwidth}%
     \centering
    \includegraphics[trim=75 25 60 40, clip, width=\textwidth]{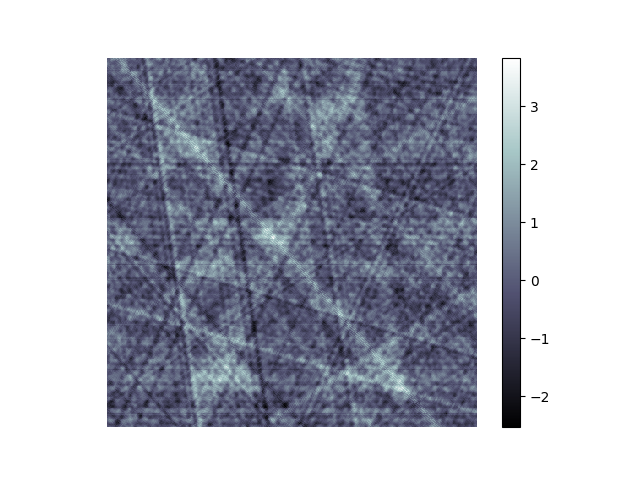}
     \vskip-0.25\baselineskip
     \ac{FBP}
   \end{minipage}%
   \hfill
   \begin{minipage}[t]{0.32\textwidth}%
     \centering
     \includegraphics[trim=75 25 60 40, clip, width=\textwidth]{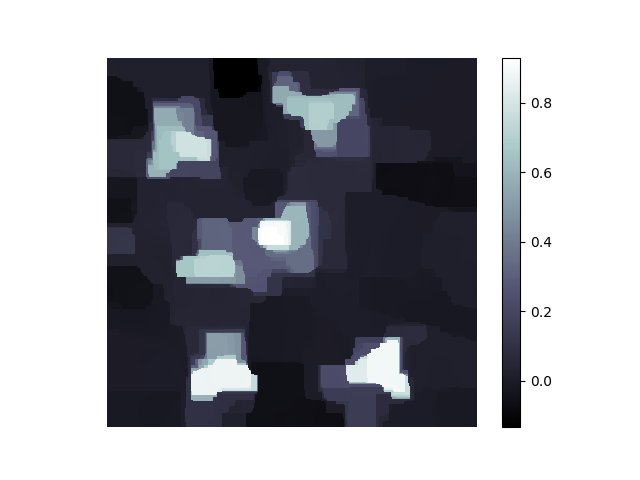}
     \vskip-0.25\baselineskip
     \ac{TV}
   \end{minipage}%
\caption{Test suite 2: Multi-object indirect registration. The template (top left) is a target consisting of six separately triangle-like objects. Images labeled with time $t$ show the final flow of diffeomorphic deformations obtained from indirect registration with $t=1.0$ denoting the final (indirectly) registered template. Data is highly noisy and under-sampled, bottom leftmost image shows data at angle $0^\circ$ (blue smooth curve is noise-free data, red jagged curve is noisy data). \Ac{FBP} and \ac{TV} reconstructions are shown for comparison.}
\label{Test_suite_2:multi_object}
\end{figure}

\paragraph{Test suite 3: Sensitivity \wrt choice of regularization parameters}

Choosing the regularization parameters is a well-known issue in most regularization schemes. Even though there is some theory for how to do this, in practice it is often chosen using heuristic methods and especially so when data is highly noisy and/or under-sampled. A natural question is therefore to empirically investigate the sensitivity of the indirect registration against variations in the regularization parameters $\gamma$ and $\sigma$ that enter in \cref{eq:RegOptim} and \cref{eq:KernelEq}, respectively.

Here the target is the well-known Shepp-Logan phantom consisting of ten ellipsoids with grey-values in $[0, 1]$. Images in shape space are digitized using $256 \times 256$ pixels. The parallel beam tomographic data consists of 362 parallel line projections along ten uniformly distributed directions in $[0^\circ, 180^\circ]$.  Hence, in a fully digitized setting, the inverse problem in \cref{eq:InvProb} amounts to solving for $65\,536$ unknowns from $3\,620$ linear equations. Besides being highly under-determined, data is also highly noisy with noise level (\ac{SNR}) of $7.06$\,dB.

As in test suite~1, the time interval $[0, 1]$ for the flow of diffeomorphism is sampled uniformly at $N = 20$ time points, the gradient step size is set as $\stepsize = 0.02$, and the regularization parameter is $\gamma = 10^{-7}$. 

The regularization parameters $(\gamma, \sigma)$ are varied and each choice results in a different indirectly registered image. Each of these are then matched against the target and the registration is quantitatively assessed using \ac{SSIM} \cite{WaBoShSi04} and \ac{PSNR} \cite{HuGh08} as figure of merits. 

The \ac{SSIM} and \ac{PSNR} values are tabulated in \cref{Test_suite_3:Sensitivity_table}, see also \cref{Test_suite_3:Sensitivity_figure} for some selected reconstructions. As given in \cref{Test_suite_3:Sensitivity_table},  the \ac{SSIM} and \ac{PSNR} values are quite similar for $\gamma = 10^{-7}, 10^{-5}, 10^{-3}, 10^{-1}$ and $\sigma = 1.0, 2.0, 2.5, 3.0, 4.0$. The corresponding reconstructed results in \cref{Test_suite_3:Sensitivity_figure} are almost the same and close to the target. But these values are obviously reduced when $\gamma = 10$ or $\sigma = 8.0$. As shown in \cref{Test_suite_3:Sensitivity_figure}, the resulting performance is not so good compared with the target. Since the choosing intervals for $\gamma$ and $\sigma$ are relatively large, this test shows that to some extent the proposed method is not sensitive to the precise choice of the regularization parameters $\gamma$ and $\sigma$ under quantitative figures of merits (SSIM and PSNR). However those values are chosen too big that would cause over-regularized results. 
\begin{table}
\centering
\begin{tabular}{c | r r r r r}
\diagbox{$\sigma$}{$\gamma$}  &  \multicolumn{1}{c}{$10^{-7}$} &  \multicolumn{1}{c}{$10^{-5}$} & \multicolumn{1}{c}{$10^{-3}$} 
 &  \multicolumn{1}{c}{$10^{-1}$} & \multicolumn{1}{c}{$10$}  \\ 
\toprule
 \multirow{2}{*}{1.0}              &  0.8958   &  0.8958   &  0.8957   &  0.8820   & 0.6800 
                              \\   &  26.69    &  26.69    &  26.69    &  26.18    & 17.82  \\ 
\hline                                  
 \multirow{2}{*}{2.0}              &  0.9039          &  0.9039   &  0.9038   &  0.8958   & 0.6967 
                              \\   &  26.96           &  26.96    &  26.96    &  26.72    & 18.45  \\ 
\hline                                   
 \multirow{2}{*}{2.5}              &  0.9018          &  0.9018   &  0.9017   &  0.8964   & 0.7057 
                              \\   &  26.90           &  26.90    &  26.89    &  26.74    & 18.71  \\ 
 \hline                                 
 \multirow{2}{*}{3.0}              &  0.9007          &  0.9007   &  0.9006   &  0.8968   & 0.7164 
                              \\   &  26.90           &  26.90    &  26.89    &  26.77    & 19.03  \\ 
\hline                                   
 \multirow{2}{*}{4.0}              &  0.8992          &  0.8992   &  0.8992   &  0.8979   & 0.7426 
                              \\   &  26.88           &  26.88    &  26.87    &  26.77    & 19.79  \\ 
\hline                                   
 \multirow{2}{*}{8.0}              &  0.8305          &  0.8305   &  0.8307   &  0.8438   & 0.7960 
                              \\   &  24.15           &  24.15    &  24.16    &  24.84    & 21.27  \\ 
\bottomrule                              
\end{tabular}
\caption{Test suite 3: \Ac{SSIM} and \ac{PSNR} values of indirectly registered images as compared to target for varying values of the regularization parameters $\gamma$ and $\sigma$, see \cref{Test_suite_3:Sensitivity_figure} for selected images. Each table entry has two values, the upper is the \ac{SSIM} and the bottom is the \ac{PSNR}.}
\label{Test_suite_3:Sensitivity_table}
\end{table}

\begin{figure}[h]
\centering
   \begin{minipage}[t]{0.32\textwidth}%
     \centering
     \includegraphics[trim=75 25 60 40, clip, width=\textwidth]%
       {shepp_logan_recon_LDDMM_geom500_time_pts20_angles10_snr_7_06_sigma2_eps0_02_lamb10-7}
     \vskip-0.25\baselineskip
     $\gamma=10^{-7}$, $\sigma = 2.0$
   \end{minipage}%
   \hfill
   \begin{minipage}[t]{0.32\textwidth}%
     \centering
     \includegraphics[trim=75 25 60 40, clip, width=\textwidth]%
       {shepp_logan_recon_LDDMM_geom500_time_pts20_angles10_snr_7_06_sigma2_eps0_02_lamb10-5}
     \vskip-0.25\baselineskip
     $\gamma=10^{-5}$, $\sigma = 2.0$
   \end{minipage}%
   \hfill
   \begin{minipage}[t]{0.32\textwidth}%
     \centering
     \includegraphics[trim=75 25 60 40, clip, width=\textwidth]%
       {shepp_logan_recon_LDDMM_geom500_time_pts20_angles10_snr_7_06_sigma2_eps0_02_lamb10-3}
     \vskip-0.25\baselineskip
     $\gamma=10^{-3}$, $\sigma = 2.0$
   \end{minipage}%
\par\medskip      
   \begin{minipage}[t]{0.32\textwidth}%
     \centering
     \includegraphics[trim=75 25 60 40, clip, width=\textwidth]%
       {shepp_logan_recon_LDDMM_geom500_time_pts20_angles10_snr_7_06_sigma3_eps0_02_lamb10-7}
     \vskip-0.25\baselineskip
     $\gamma=10^{-7}$, $\sigma = 3.0$
   \end{minipage}%
   \hfill
   \begin{minipage}[t]{0.32\textwidth}%
     \centering
     \includegraphics[trim=75 25 60 40, clip, width=\textwidth]%
       {shepp_logan_recon_LDDMM_geom500_time_pts20_angles10_snr_7_06_sigma3_eps0_02_lamb10-5}
     \vskip-0.25\baselineskip
     $\gamma=10^{-5}$, $\sigma = 3.0$
   \end{minipage}%
   \hfill
   \begin{minipage}[t]{0.32\textwidth}%
     \centering
     \includegraphics[trim=75 25 60 40, clip, width=\textwidth]%
       {shepp_logan_recon_LDDMM_geom500_time_pts20_angles10_snr_7_06_sigma3_eps0_02_lamb10-3}
     \vskip-0.25\baselineskip
     $\gamma=10^{-3}$, $\sigma = 3.0$
   \end{minipage}%
   \par\medskip      
	\begin{minipage}[t]{0.32\textwidth}%
     \centering
     \includegraphics[trim=75 25 60 40, clip, width=\textwidth]%
       {shepp_logan_recon_LDDMM_geom500_time_pts20_angles10_snr_7_06_sigma8_eps0_02_lamb10-7}
     \vskip-0.25\baselineskip
     $\gamma=10^{-7}$, $\sigma = 8.0$
   \end{minipage}%
   \hfill
   \begin{minipage}[t]{0.32\textwidth}%
     \centering
     \includegraphics[trim=75 25 60 40, clip, width=\textwidth]%
       {shepp_logan_recon_LDDMM_geom500_time_pts20_angles10_snr_7_06_sigma3_eps0_02_lamb10}
     \vskip-0.25\baselineskip
     $\gamma=10.0$, $\sigma = 3.0$
   \end{minipage}%
   \hfill
   \begin{minipage}[t]{0.32\textwidth}%
     \centering
     \includegraphics[trim=75 25 60 40, clip, width=\textwidth]%
       {shepp_logan_recon_LDDMM_geom500_time_pts20_angles10_snr_7_06_sigma8_eps0_02_lamb10}
     \vskip-0.25\baselineskip
     $\gamma=10.0$, $\sigma = 8.0$
   \end{minipage}%
   \par\medskip      
   \begin{minipage}[t]{0.32\textwidth}%
     \centering
     \includegraphics[trim=75 25 60 40, clip, width=\textwidth]{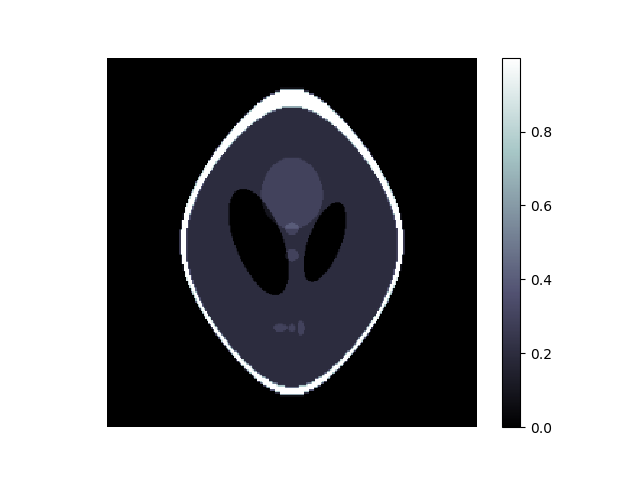}
     \vskip-0.25\baselineskip
     Template
   \end{minipage}%
   \quad
   \begin{minipage}[t]{0.32\textwidth}%
     \centering
     \includegraphics[trim=75 25 60 40, clip, width=\textwidth]{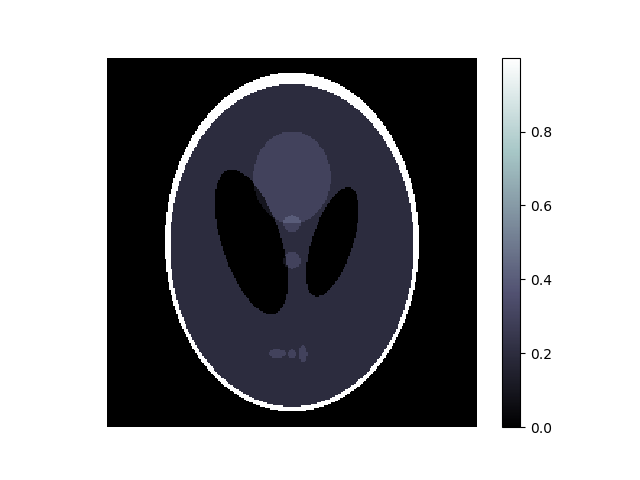}
     \vskip-0.25\baselineskip
     Target
   \end{minipage}%
\caption{Test suite 3: Sensitivity \wrt choice of regularization parameters $\gamma$ and $\sigma$. Images show final registered result (\ie, $t=1$) for different values of these parameters. See \cref{Test_suite_3:Sensitivity_table} for a more quantitative comparison that includes a wider range of parameter values.}
\label{Test_suite_3:Sensitivity_figure}
\end{figure}

\paragraph{Test suite 4: Topology of the template}
This test investigates the influence of a template with a topology that differs from the target. The test suite involves two tests, one where the the template lacks an object when compared to the target and the other where there is an additional object in the template that does not exist in the target. 

In both cases, the template and the target are modifications of the Shepp-Logan phantom used in test suite~3 and tomographic data is generated following the same protocol as in test suite~3, albeit with slightly differing noise levels (7.06\,dB and 6.46\,dB, respectively). Finally, apart from the number of iterations which here is 1000, indirect registration in both cases was performed using the same parameter setting as in test suite~3.

Results are shown in \cref{Test_suite_4:topology}. It is clear that the final indirectly registered template retains the same topology, which is to be expected since diffeomorphic registration using geometric group action can never introduce or remove an object that is not in the template. This also points to the importance of having a template with the correct topology when using diffeomorphic (indirect) registration with a geometric group action.

\begin{figure}[h]
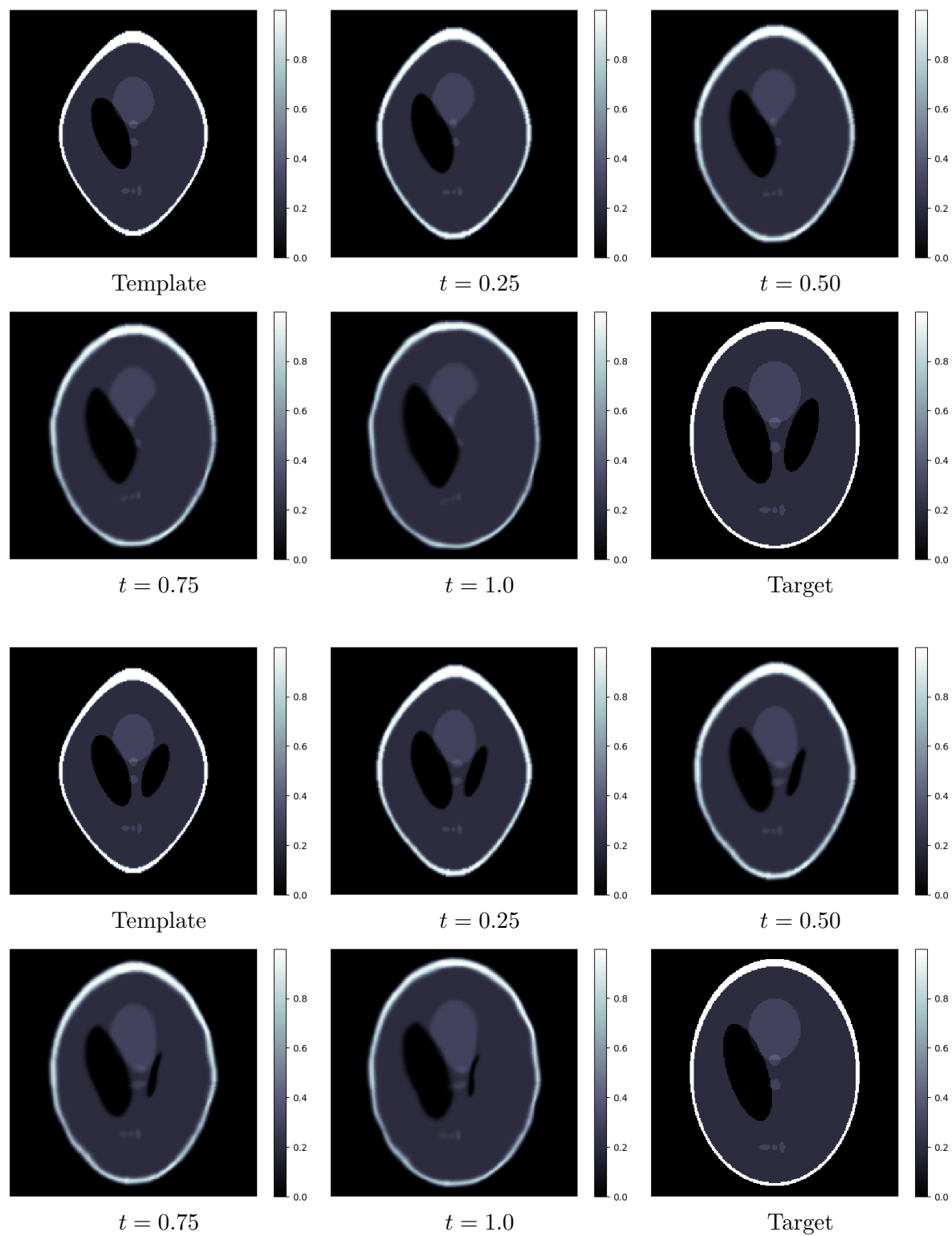

\centering
   \begin{minipage}[t]{0.32\textwidth}%
     \centering
     \includegraphics[trim=75 25 60 40, clip, width=\textwidth]%
       {shepp_logan_recon_LDDMM_geom1000_time_pts20_angles10_snr_7_06_sigma2_eps0_02_lamb10-7_figure1}
     \vskip-0.25\baselineskip
     Template
   \end{minipage}%
   \hfill
   \begin{minipage}[t]{0.32\textwidth}%
     \centering
     \includegraphics[trim=75 25 60 40, clip, width=\textwidth]%
       {shepp_logan_recon_LDDMM_geom1000_time_pts20_angles10_snr_7_06_sigma2_eps0_02_lamb10-7_figure2}
     \vskip-0.25\baselineskip
     $t=0.25$
   \end{minipage}%
   \hfill
   \begin{minipage}[t]{0.32\textwidth}%
     \centering
     \includegraphics[trim=75 25 60 40, clip, width=\textwidth]%
       {shepp_logan_recon_LDDMM_geom1000_time_pts20_angles10_snr_7_06_sigma2_eps0_02_lamb10-7_figure3}
     \vskip-0.25\baselineskip
     $t=0.50$
   \end{minipage}%
\par\medskip      
   \begin{minipage}[t]{0.32\textwidth}%
     \centering
     \includegraphics[trim=75 25 60 40, clip, width=\textwidth]%
       {shepp_logan_recon_LDDMM_geom1000_time_pts20_angles10_snr_7_06_sigma2_eps0_02_lamb10-7_figure4}
     \vskip-0.25\baselineskip
     $t=0.75$
   \end{minipage}%
   \hfill
   \begin{minipage}[t]{0.32\textwidth}%
     \centering
     \includegraphics[trim=75 25 60 40, clip, width=\textwidth]%
       {shepp_logan_recon_LDDMM_geom1000_time_pts20_angles10_snr_7_06_sigma2_eps0_02_lamb10-7_figure5}
     \vskip-0.25\baselineskip
     $t=1.0$
   \end{minipage}%
   \hfill
   \begin{minipage}[t]{0.32\textwidth}%
     \centering
     \includegraphics[trim=75 25 60 40, clip, width=\textwidth]%
       {shepp_logan_recon_LDDMM_geom1000_time_pts20_angles10_snr_7_06_sigma2_eps0_02_lamb10-7_figure6}
     \vskip-0.25\baselineskip
     Target
   \end{minipage}%
\\[2em]   
   \begin{minipage}[t]{0.32\textwidth}%
     \centering
     \includegraphics[trim=75 25 60 40, clip, width=\textwidth]%
       {shepp_logan_recon_LDDMM_geom1000_time_pts20_angles10_snr_6_46_sigma2_eps0_02_lamb10-7_figure1}
     \vskip-0.25\baselineskip
     Template
   \end{minipage}%
   \hfill
   \begin{minipage}[t]{0.32\textwidth}%
     \centering
     \includegraphics[trim=75 25 60 40, clip, width=\textwidth]%
       {shepp_logan_recon_LDDMM_geom1000_time_pts20_angles10_snr_6_46_sigma2_eps0_02_lamb10-7_figure2}
     \vskip-0.25\baselineskip
     $t=0.25$
   \end{minipage}%
   \hfill
   \begin{minipage}[t]{0.32\textwidth}%
     \centering
     \includegraphics[trim=75 25 60 40, clip, width=\textwidth]%
       {shepp_logan_recon_LDDMM_geom1000_time_pts20_angles10_snr_6_46_sigma2_eps0_02_lamb10-7_figure3}
     \vskip-0.25\baselineskip
     $t=0.50$
   \end{minipage}%
\par\medskip      
   \begin{minipage}[t]{0.32\textwidth}%
     \centering
     \includegraphics[trim=75 25 60 40, clip, width=\textwidth]%
       {shepp_logan_recon_LDDMM_geom1000_time_pts20_angles10_snr_6_46_sigma2_eps0_02_lamb10-7_figure4}
     \vskip-0.25\baselineskip
     $t=0.75$
   \end{minipage}%
   \hfill
   \begin{minipage}[t]{0.32\textwidth}%
     \centering
     \includegraphics[trim=75 25 60 40, clip, width=\textwidth]%
       {shepp_logan_recon_LDDMM_geom1000_time_pts20_angles10_snr_6_46_sigma2_eps0_02_lamb10-7_figure5}
     \vskip-0.25\baselineskip
     $t=1.0$
   \end{minipage}%
   \hfill
   \begin{minipage}[t]{0.32\textwidth}%
     \centering
     \includegraphics[trim=75 25 60 40, clip, width=\textwidth]%
       {shepp_logan_recon_LDDMM_geom1000_time_pts20_angles10_snr_6_46_sigma2_eps0_02_lamb10-7_figure6}
     \vskip-0.25\baselineskip
     Target
   \end{minipage}%
\caption{Test suite 4: Topology of the template. First two rows is the case when the template lacks one object as compared to target, the two following rows is the case when template has one extra object. Images labeled with time $t$ show the final flow of diffeomorphic deformations obtained from indirect registration with $t=1.0$ denoting the final (indirectly) registered template.}
\label{Test_suite_4:topology}
\end{figure}

\section{Discussion}\label{sec:Discussion}

\subsection{Topology and manifold structure on groups of diffeomorphisms}\label{sec:TopoGDiff}
The \ac{LDDMM} framework can be seen as part of an even more abstract formulation  as outlined in \cite{BrHo15} that involves replacing the group of diffeomorphisms in \cref{eq:DeforSet} with an abstract Lie group $\LieGroup$ of transformations that acts on the shape space $\RecSpace$, and the vector space $\LieAlgebra$ is the corresponding Lie algebra of $\LieGroup$. The situation becomes intricate when one seeks to introduce a useful manifold structure on $\LieGroup$ in the infinite dimensional setting as outlined in \cite{Ba97}. 

As an example, $\DiffG^1(\domain)$ and $\DiffG^{\infty}(\domain)$ are both Fr\'echet--Lie groups, but neither is a Banach manifold. As nicely summarized in \cite{Br16}, working with such deformation groups allows using methods from geometry, but less tools from analysis are applicable. The other choice is to consider a group like $\DiffG^{p}(\domain)$. This group is a Banach manifold and a topological group so there are powerful tools from analysis for prove existence results. On the other hand, it is not a Lie group since the group operations are continuous, but not differentiable. Hence, many of the tools from differential geometry are not available. 

One may next consider groups generated as in \cref{eq:DeforSet} from an admissible Hilbert space of vector fields. The class of admissible vector spaces is however very large, which in turn limits how far one can develop a theory for image registration in this setting. As nicely outlined in \cite[p.~1538]{BrVi17}, $\DiffeoGroup$ in \cref{eq:DeforSet} does not need to have a differentiable structure in the general setting. Furthermore, it does not have be a topological group under the topology induced by the metric in \cref{eq:DiffMetric1}. In fact, there is no natural way to define a topology on $\DiffeoGroup$ in the general setting that makes it a topological group. Hence, one has studied specific classes of admissible spaces. On the other hand, restricting attention to velocity fields with Sobolev regularity generate classical groups of Sobolev diffeomorphisms. Such a group is a Hilbert manifold as well as a topological group, but it is not necessarily a Lie group \cite[p.~1512 and Theorem~8.3]{BrVi17} but the exponential map (\cref{rem:ExpMap}) is continuous \cite[Theorem~4.4]{BrVi17}.  

\subsection{Alternative frameworks for large deformations}\label{rem:AltLDDMM}
\Ac{LDDMM} is capable of generating a vast range of diffeomorphic deformations, but it comes with some drawbacks coupled to its \emph{non-parametric} nature. There is currently no natural parametrisation of deformations using variables that are easy to interpret from an image registration point-of-view. Furthermore, representing these non-parametric diffeomorphic deformations in software in a computationally feasible way is challenging and requires introducing further structure.  This need for further structure on the group of diffeomorphic deformations has resulted in a number of parametric frameworks, some of which are mentioned below. See the excellent introduction in \cite[p.~30-35]{Gr16} for a more informative survey.  

The poly-affine framework \cite{ArPeAy05,ArCoAyPe09} considers deformations obtained by integrating trajectories of vector fields that include local affine deformations. The weights for the each local affine transformation and corresponding regions for its action remain fixed during the integration of the flow. The approach is successfully used for registration of bones \cite{SePeRe12} and  images with cardiac motion \cite{McSeBePe15}, but the approach is not suitable for very large deformations. 
In the GRID model \cite{GrSrSa07,PoVr11} the idea is to fuse local deformations by fusing corresponding velocity fields. The local deformations can be more generic than in the poly-affine case, but each small deformation is located around a point (seed) that defines a local region for its action. The global deformation is a discrete temporal integration of a trajectory of corresponding vector fields. The framework has been used to model biological growth \cite{GrSrSa07}.
The diffeons framework \cite{Yo12} considers deformations are defined as the solution of a flow equation that integrates the identity map along a trajectory of vector fields. The difference to \ac{LDDMM} is that one only considers trajectories of vector fields that can be written as a finite linear combination of a finite family of vector fields, called diffeons, which form a finite dimensional subspace. A further development of diffeons is deformation modules \cite{Gr16}. Here, one introduces a new deformation model that can generate diffeomorphisms that can be locally constrained to a certain type of deformations with a natural  interpretation. The diffeomorphisms are as in \ac{LDDMM} defined as final values of a flows of trajectories of vector fields and the constraints corresponds to setting of a family generators of vector fields similar to the diffeons framework. 

In these parametrised frameworks, the trajectory of the vector field is constrained to be a combination of a few generators. Hence, by choosing these generators it should in principle be possible to build trajectories of diffeomorphisms that correspond to specific a priori deformations. It is however very difficult, if not impossible, to define generators that correspond to more complex deformations that typically arise in many imaging applications. Another issue that is important for image registration is to have a metric on the shape space. Another aspect relevant for (indirect) image registration is to let the metric on the group of deformations induce a metric on the shape space in a canonical manner. This is a key feature of \ac{LDDMM} and it also holds for the diffeons framework, whereas this is not possible within the polyaffine and GRID frameworks. 

A final recent development is to construct diffeomorphisms by using techniques from machine learning. Assume one has training data $\{ (\template_i, \signal_i) \}_i \subset \RecSpace \times \RecSpace$ where $\signal_i = \DeforOpG(\template_i,\diffeo_i)$ for some (unknown) deformation $\diffeo_i \in \DiffeoGroup$ and $\DeforOpG$ given by \cref{eq:DeforOp}. One may then use principles from machine learning to learn how to estimate the deformations from training data. The papers \cite{YaKwNi16,YaKwStNi17} implements such as scheme. More precisely, they use machine learning in order to learn how to estimate the optimal diffeomorphic deformations. The idea is to learn the initial momenta from the training data. Once these initial momenta are learned, one can use the shooting method from \ac{LDDMM} theory to generating the smallest diffeomorphic deformation \cite[Section~11.6.4]{Yo10}. The learning is done by using a (deep) convolution neural network that learns an operator  $\Lambda_{\Theta} \colon \RecSpace \times \RecSpace \to \DiffeoGroup$ whose parameters $\Theta$ are obtained by minimising the mean absolute error. The trained parameter yields the initial momenta, so it is now possible to generate elements in $\DiffeoGroup$ by means of the shooting method. The approach extends readily to the indirect registration setting. Training data is now $\{ (\template_i, \data_i) \}_i \subset \RecSpace \times \DataSpace$ where \cref{eq:ExactShapeAss} holds, \ie, $\data_i = \ForwardOp\bigl( \DeforOpG(\template_i,\diffeo_i) \bigr)$ for some (unknown) deformation $\diffeo_i \in \DiffeoGroup$. One can then perform learning using a loss function similar to the direct matching case. This time the learning is done by using a (deep) convolution neural network that learns an operator
$\Lambda_{\Theta} \colon \RecSpace \times \DataSpace \to \DiffeoGroup$ whose parameters $\Theta$ are obtained by minimising a natural distance notion in data space. Such an approach is possible especially when combined with \cite{AdOk17} for reconstruction.

\subsection{Spatiotemporal image reconstruction}
The goal here is to recover time dependent images $\signal_t \in \RecSpace$ from time dependent data $\data_t \in \DataSpace$ where $\data_t = \ForwardOp(\signal_t) + \noisedata_t$ with $\noisedata_t$ denoting the noise term. A spatiotemporal analogue of \cref{eq:ExactShapeAss} is to assume that $\signal_t = \DeforOpG \bigl(\gelement{0,t}{\velocityfield},\template \bigr)$ for some unknown curve $t \mapsto \gelement{0,t}{\velocityfield}$ in $\DiffeoGroup$. This allows one to formulate the following variational approach to the spatiotemporal image reconstruction problem:
\begin{equation}\label{eq:SpatioTemporal}
\inf_{\substack{\velocityfield \in \FlowSpace{2} \\ \template \in \RecSpace}}
\Biggl[ \int_0^1 \biggl(\gamma \int_0^t \bigl\Vert \velocityfield(s,\cdot) \bigr\Vert^2_{\LieAlgebra}\dint s + \MatchingFunctionalX  \circ \DeforOpG \bigl(\gelement{0,t}{\velocityfield},\template \bigr) \biggr) \dint t 
\Biggr].
\end{equation}
In the above, $\template \in \RecSpace$ is the starting image. The curve of diffeomorphism $t \mapsto \gelement{0,t}{\velocityfield}$ is generated by the velocity field $\velocityfield$ as a solution to \cref{eq:FlowEq}, $\DeforOpG \colon \DiffeoGroup \times \RecSpace \to \RecSpace$ is given by the group action, and $\MatchingFunctionalX  \colon \RecSpace \to \Real$ is the spatiotemporal analogue of  \cref{eq:MatchingFunctionalX}, \ie,
\[ 
\MatchingFunctionalX (\signal) :=  \mu \RegFunc(\signal) + \DataDisc\bigl( \ForwardOp(\signal), \data_t \bigr)
    \quad\text{for $\signal \in \RecSpace$.} 
\]

The optimization problem in \cref{eq:SpatioTemporal} is quite challenging and a natural approach is to consider an intertwined scheme where a reconstruction (of the template) step is followed by an indirect registration step, see \cite[section~12.3]{OkChDoRaBa16} for further details. In such case, indirect registration becomes a key element in spatiotemporal image reconstruction.

\subsection{Shape based reconstruction}
It is possible to view indirect registration as a variational reconstruction scheme that makes use of a priori shape information encoded by the template. The tests in \cref{sec:2DCT} indicate that if the template has the correct topology, then indirect registration performs fairly well. Furthermore, it is also robust against choice of template and regularization parameters. 

When using geometric group action, indirect registration is mostly useful as a reconstruction scheme in imaging problems where the aim is to recover the shape, so intensity variations are of little, or no, importance. One example of such an imaging problem in applications is when \ac{ET} is used to image the internal 3D structures at nano-scale of a specimen \cite{BlPeBaBa15}. This is however a rather limited category of inverse problems and many of the medical imaging applications do not fall under this category. One approach to address this is to consider mass-preserving group action instead of the geometric one. Another is to keep the geometric group action, but let it act on the intensity map. This leads to metamorphosis that is briefly described in \cref{sec:Intensity}.

\subsection{Metamorphosis}\label{sec:Intensity}
The \ac{LDDMM} approach with geometric deformations only moves intensities, it does not change them. Metamorphosis extends \ac{LDDMM} by allowing the diffeomorphisms to also act on intensities. This is achieved by coupling the flow equation in \eqref{eq:FlowEq} to a similar flow equation for the intensities. 

More precisely, in metamorphosis one needs to consider time dependent images. Hence, introduce $\RecSpaceFlow$ as the set of time dependent images that are square integrable in time and contained in $\RecSpace$ at all time points. Hence, if $\zeta \in \RecSpaceFlow$ then $\zeta(t, \Cdot) \in \RecSpace$ and 
\[  \Vert \zeta \Vert_{\RecSpaceFlow} :=  \sqrt{\int_0^1  \bigl\Vert \zeta(t,\Cdot) \bigr\Vert_{\RecSpace}^2\dint t} < \infty. \]
The Hilbert space structure of $\RecSpace$ induces a Hilbert space structures on $\RecSpaceFlow$. Furthermore, introduce $\intensityflow \colon [0,1]\times \domain \to \Real$ as the solution to 
\begin{equation}\label{eq:FlowEq:Intensity}
  \begin{cases}
    \dfrac{d}{dt}\intensityflow(t,x) = \zeta\bigl(t,\diffeo(t,x)\bigr) \quad (t,x) \in [0,1]\times \domain,  & \\[0.5em]
    \intensityflow(0,x) = \template(x) \quad x \in \domain. &
  \end{cases}
\end{equation}
In the above, $\diffeo$ is the diffeomorphism that solves \eqref{eq:FlowEq}, 
$\template \in \RecSpace$ is the template, and $\zeta \in \RecSpaceFlow$ is a ``source'' term 
that is contained in the space $\RecSpaceFlow$ of time dependent images introduced above.

Bearing in mind the above, metamorphosis based image registration in \cite{RiYo16} easily extends to the indirect setting, which in turn extends the \ac{LDDMM} based indirect registration in \cref{eq:OptimV}: 
\begin{equation}\label{eq:Metam:ODE}
\begin{cases}
\displaystyle{\inf_{\substack{\velocityfield \in \FlowSpace{2} \\ \zeta \in \RecSpaceFlow}}} 
\Bigl[ \gamma \Vert \velocityfield \Vert_{\FlowSpace{2}}^2 + \tau \Vert \zeta \Vert_{\RecSpaceFlow}^2 
+ \MatchingFunctionalX  \circ \DeforOpX \bigl(\velocityfield,\intensityflow(1,\Cdot) \bigr)
\Bigr] 
& \\[2em]
\dfrac{d}{dt}\intensityflow(t,x) = \zeta\bigl(t, \gelement{0,t}{\velocityfield}(x) \bigr)  
\quad (t,x) \in [0,1]\times\domain, & \\[0.5em]   
\intensityflow(0,x) = \template(x) \quad x\in \domain.  
\end{cases}  
\end{equation}
Here, $\MatchingFunctionalX  \colon \RecSpace \to \Real$ is given by \cref{eq:MatchingFunctionalX} and $\DeforOpX \colon \FlowSpace{2} \times \RecSpace \to \RecSpace$ is given by \cref{eq:DeforOpGeom} (geometric group action). Note that the velocity field $\velocityfield \in \FlowSpace{2}$ in the objective also generates, through \cref{eq:FlowEq}, the diffeomorphism $\gelement{0,t}{\velocityfield} \colon \domain \to \domain$ in the \ac{ODE} constraint.  Furthermore, $\intensityflow(1,\Cdot) \in \RecSpace$ in the objective functional depends on the optimization variable $\zeta\in \RecSpaceFlow$ through the \ac{ODE} constraint. Finally, as in \cref{sec:PDEFormulations} for \ac{LDDMM}, one can re-phrased \eqref{eq:Metam:ODE} as a \ac{PDE} constrained problem:
\begin{equation}\label{eq:DirectMatchingMetamorphosisPDE}
\begin{cases}
\displaystyle{\inf_{\substack{\velocityfield \in \FlowSpace{2} \\ \zeta \in \RecSpaceFlow}}} 
\Big[  
\gamma \Vert \velocityfield \Vert_{\FlowSpace{2}}^2
+ \tau \Vert \zeta \Vert_{\RecSpaceFlow}^2 +
\MatchingFunctionalX \bigl(\signal(1, \Cdot)\bigr)
\Bigr] & \\[1em]
\partial_t \signal(t,x) + \Bigl\langle \grad_x \signal(t,x), \velocityfield(t,x) \Bigr\rangle_{\Real^n}\!\! = \zeta(t,x)
\quad (t,x) \in [0,1]\times \domain, & \\[0.5em] 
\signal(0,x) = \template(x) \quad x\in \domain.
\end{cases}
\end{equation}

\subsection{Relation to optimal transport}\label{sec:OptimalTransport}
Optimal transport is a well developed mathematical theory that defines a family of metrics between probability distributions. These metrics measure the mass of an optimal displacement according to a base cost defined on the space supporting the distributions. Wasserstein distance refers to using $\LpSpace^p$-norm as base costs. 
It is a deep mathematical theory \cite{Vil09} with many applications. Applications to imaging is nicely surveyed in \cite{Pa16}, the focus here is on its usage in diffeomorphic image registration.

Optimal transport metrics are suitable for shape registration since they are sensitive to spatial displacements of the shape. This framework is \cite{ChScPeVi16,LiMiSa16} extended to handle measures of different total mass (unbalanced optimal transport), which makes it applicable to a wider range of image registration problems. Another aspect associated with using optimal transport distances is computability. Without approximations, it is computationally feasible only for 1D images. This situation has however changed during the last few years, much due to using entropic approximation schemes resulting in Sinkhorn-type iterations \cite{Cu13}. Such an approach is developed in \cite{KaRi16} for indirect image registration where the authors used Sinkhorn-type iterations to compute the proximal operator of the transport problem for large problems, such as indirect image registration in tomography. This scheme also applies to unbalanced optimal transport in \cite{FeChViPe17} to quantify the similarity in diffeomorphic registration of imaging data. Finally, \cite{MaRuScSi15} combines metamorphosis with optimal transport for image registration. The numerical method is based on adding a transport penalty to the objective in \cref{eq:DirectMatchingMetamorphosisPDE}, which is the \ac{PDE} constrained formulation of metamorphosis.

\section*{Acknowledgments}
The authors would like to thank Barbara Gris and Oliver Verdier for valuable comments and remarks. We especially thank Barbara Gris for contributions in stating and proving many of the results in \cref{sec:RegularizingProperty}.

\bibliographystyle{siamplain}
\bibliography{shapereferences}
\end{document}